\title[]{Global well-posedness and large time behavior of 3D incompressible inhomogeneous magnetohydrodynamic equations in the exterior of a cylinder}
\author{Jitao Liu,\,Min Liu$^*$}
\address[Jitao Liu]{Department of Mathematics, School of Mathematics, Statistics and Mechanics, Beijing University of Technology, Beijing, 100124, P. R. China.}
\email{jtliu@bjut.edu.cn,\,\,\,jtliumath@qq.com}
\address[Min Liu]{College of Mathematics and Information Science, Zhengzhou University of Light Industry, Zhengzhou, Henan, 450002,  P. R. China}
\email{mliu@zzuli.edu.cn,\,\,\,liumin4521@163.com}
\keywords{Global well-posedness; Large time behavior;  Magnetohydrodynamic equations; Axisymmetric solutions; Exterior of a cylinder.}
\thanks{{\em 2020 Mathematics Subject Classification.} 35B07; 35B40; 76D03; 76D05; 76W05.}
\theoremstyle{plain}
\newtheorem{theorem}{Theorem}[section]
\newtheorem*{theorem*}{Theorem}
\newtheorem{lemma}{Lemma}[section]
\theoremstyle{definition}
\newtheorem{remark}{Remark}[section]
\newcommand{\bga}{\begin{array}}
\newcommand{\eda}{\end{array}}
\newcommand{\p}{\partial}
\let\f=\frac
\let\p=\partial
\def\R{\Bbb R}
\let\f=\frac
\let\p=\partial
\def\R{\Bbb R}
\newcommand{\beq}{\begin{equation}}
\newcommand{\eeq}{\end{equation}}
\newcommand{\ben}{\begin{eqnarray}}
\newcommand{\een}{\end{eqnarray}}
\newcommand{\beno}{\begin{eqnarray*}}
\newcommand{\eeno}{\end{eqnarray*}}
\begin{document}


\begin{abstract}
{When the vaccum is allowed, if the global existence and uniqueness of strong solutions to three dimensional incompressible inhomogeneous magnetohydrodynamic equations holds true or not has always been a challenging open problem, even for the magnetofluids with special structures. In this paper, through deeply exploring the internal structure and characteristic of axisymmetric flows, we obtain some new discoveries and give a partial answer to above issue. More precisely, we prove that the axisymmetric magnetofluids flowing in the exterior of a cylinder will definitely admits a unique strong solution that exists globally in time without any compatibility conditions and small assumptions imposed on the initial data. Furthermore, we establish the algebraic decay rates for the time and spatial derivatives of both velocity and magnetic fields. To the best of our knowledge, this result gives the first unique 3D large solution existing globally in time.}
\end{abstract}
\footnote{*Corresponding author}
\maketitle
\tableofcontents

\vskip .3in
\section{Introduction and main results}

The incompressible magnetohydrodynamics ($\it MHD\,\,for\,\,short$) equations describe the motion of electrically conducting fluids, that is the dynamic motion of fluid and magnetic field interact strongly with each other, the readers can see \cite{Dav-2001} for details. In particular,
the three dimensional ($\it 3D\,\,for\,\,short$) incompressible inhomogeneous MHD equations are written as,
\begin{equation}\label{1-1}
\left\{\begin{array}{ll} \rho_t +\text{div}(\rho u)=0,\quad&\text{in}\,[0, T)\times\Omega{\,},
\\\noalign {\vskip 4pt}
(\rho u)_t+\text{div}(\rho u\otimes u)-\mu\triangle u+\nabla p=b\cdot\nabla b,\quad&\text{in}\,[0, T)\times\Omega{\,},
\\\noalign {\vskip 4pt}
b_t-\nu\triangle b+u\cdot\nabla b-b\cdot\nabla u=0,\quad&\text{in}\,[0, T)\times\Omega{\,},
\\\noalign {\vskip 4pt}
\text{div}\, u=\text{div}\, b=0, \quad&\text{in}\,[0, T)\times\Omega,
\end{array}\right.
\end{equation}
where $\rho=\rho(x, t)$ is the density, $u=(u^1, u^2, u^3)$ and $b=(b^1, b^2, b^3)$ represent the velocity and magnetic field respectively, $p=p(x, t)$ denotes the pressure of fluid. The nonegative constants $\mu\geq0$ and $\nu\geq0$ stand for the viscosity and resistivity coefficients separately, $\Omega\in\mathbb{R}^3$ is a domain. Without loss of generality, we assume $\mu=\nu=1$ in this paper and the initial data are given by
\begin{align}\label{1-2}
(\rho, \rho u, b)|_{t=0}=(\rho_0, \rho_0u_0, b_0).
\end{align}

If the motion occurs without magnetic field ($\mathrm{i.\,e.}\,b=0$), the system \eqref{1-1} reduces to the classical incompressible inhomogeneous Navier-Stokes equations. For this model in the whole space, Antontsev-Kazhikov \cite{b1.2} first established the global existence of weak solutions without vacuum (see also \cite{b1.3, b1.4}).If the vacuum is allowed, Simon \cite{b1.5, b1.6} and Lions \cite{b1} proved the global existence of
weak solutions. For the bounded domain $\Omega$ with Dirichlet boundary condition, Ladyzhenskaya-Solonnikov \cite{b1.7} first obtained the global well-posedness of strong solutions. In 2003, Choe-Kim \cite{b1.8} proved the local existence and uniqueness of strong solutions for 3D bounded and unbounded domains $\Omega$ if the initial data satisfies the following compatibility condition
\begin{align}\label{1-5}
-\mu\triangle u_0+\nabla p_0=\rho_0^{\frac{1}{2}}g,\quad\mathrm{for}\,\,\mathrm{some}\,\, (p_0, g)\in H^1(\Omega)\times L^2(\Omega).
\end{align}
Recently, this compatibility condition \eqref{1-5} was removed by Li in \cite{b1.12}. In the case of global well-posedness theory of strong solutions,
for 2D model, it was solved by Huang-Wang \cite {HW-2014} in 2014 and L\"{u}-Shi-Zhong \cite{b1.11} in 2018 successively. For $\Omega=\mathbb{R}^3$ or be a bounded domain and the vacuum is allowed, Craig-Huang-Wang \cite{b1.13} proved the  global existence and uniqueness of strong solutions supposing  $\|u_0\|_{\dot{H}^{\f12}(\Omega)}$ is small (see also a newer paper \cite{b1.14}). For the axisymmetric flows without swirl, Abidi-Zhang \cite{b1.24} established the global well-posedness of strong solutions provided that $\|\frac{\rho_0^{-1}-1}{r}\|_{L^{\infty}(\mathbb{R}^3)}$ is sufficiently small. This result was then extended to the case with swirl for sufficient small  $\|\frac{\rho_0^{-1}-1}{r}\|_{L^{\infty}(\mathbb{R}^3)}$ and $\|u_0^{\theta}\|_{L^3(\mathbb{R}^3)}$ by Chen-Fang-Zhang in \cite{b1.25}. It should be noted that for 3D model, the small assumptions on the initial data were essential in above results. Until recently, Guo-Wang-Xie \cite{GWX-2021} established the global existence and uniqueness of axisymmetric strong solutions in the exterior of a cylinder without any small assumptions on the initial data. Afterwards, Wang-Guo \cite{WG-2025} and Liu \cite{MinL2025} studied the large time behavior of this strong solution and obtained its algebraic and exponential decay rates of velocity field respectively.

 When the magnetic field is taken into account, there will be more and stronger nonlinear coupling effect and the situation becomes more complicated and quite different from incompressible inhomogeneous Navier-Stokes equations. For 2D bounded domain and the vacuum is allowed, Huang-Wang \cite{HW-2013} established the global existence and uniqueness of strong solutions provided that the compatibility condition holds. This compatibility condition was removed by Zhong in \cite{Zhongx2021} later. In 2014, for 2D Cauchy problem, Gui \cite{Gui-2014} proved that the system is globally well-posed for a generic family of variations of initial data and an inhomogeneous electrical conductivity. For 3D whole space,  Abidi-Paicu \cite{AP-2008} first established the global existence and uniqueness of strong solutions with small initial data in critical Besov spaces. For 3D periodic domain or bounded domain and the vacuum is allowed, Xu et al. \cite{XZQF-2022} proved the global well-posedness of strong solutions with some smallness assumptions on the initial data. Moreover, for the axisymmetric MHD flows without swirl and with only swirl component of magnetic field, Liu \cite{LWJ-2024} obtained the global well-posedness provided that $\|\frac{\rho_0^{-1}-1}{r}\|_{L^{\infty}(\mathbb{R}^3)}$ is sufficiently small, see also \cite{JL-2015} for the homogeneous model.

 From the literature mentioned above, we can discover that for 3D model of system \eqref{1-1}, to establish the unique strong solution existing globally in time, the small assumptions on the initial data are essential. There is not any unique 3D large solution existing globally in time still, even for axisymmetric flows. With this open issue in mind and motivated by the recent work \cite{GWX-2021}, in this paper, we make the first attempt to look for the unique and strong large axisymmetric solution in the exterior of a cylinder
 \begin{align}\label{domain}
 	\Omega=\{(x_1, x_2, x_3)\in\mathbb{R}^3: r^2=x_1^2+x_2^2>1, x_3\in\mathbb{R}\},
 \end{align}
 with Dirichlet boundary conditions,
 \begin{align}\label{1-2boundary}
 u=0,\,\,b=0\,\,\text{on}\,\,[0, T)\times\partial\Omega.
 \end{align}
Through fully exploring the internal structure and characteristic of axisymmetric flows and the domain \eqref{domain} considered in current paper, we obtain some new observations (the details will be given after Theorem \ref{theorem.1}). With the help of them, we are able to establish the global existence, uniqueness and large time decay rates of axisymmetric strong solutions, which are summarized in the following main theorem.

\begin{theorem}\label{theorem.1}
Let $\Omega$ be the exterior of a cylinder, for given $\bar{\rho}>0$, assume that the initial data $(\rho_0, u_0, b_0)$ is axisymmetric and satisfies
\begin{align}\label{1-7}
	0\leq\rho_0\leq\bar{\rho},\,\,\rho_0-\bar{\rho}\in L^{\frac{3}{2}}\cap\dot{H}^1(\Omega),\,\,u_0\in H^1_{0, \sigma}(\Omega),\,\,b_0\in H^1_{0, \sigma}(\Omega).
\end{align}
Then for any given $0<T<\infty$ and $q\in[2, \infty)$, there is a unique global strong solution $(\rho, u, b)$ of the system \eqref{1-1}-\eqref{1-2} and \eqref{1-2boundary} such that 
\begin{align}\label{1-9}
	\left\{
	\begin{aligned}
		&0\leq\rho-\bar{\rho}\in L^{\infty}([0, +\infty); L^{\f32}\cap L^{\infty}\cap \dot{H}^1(\Omega))\cap C([0, +\infty);L^{q}(\Omega)),\\
		&\rho u\in C([0, +\infty); L^2(\Omega)),\,\,\rho_t\in L^4([0, +\infty); L^2(\Omega)),\,\,\sqrt{\rho}u_t\in L^2([0, +\infty); L^2(\Omega)),\\
		&u\in L^\infty([0, +\infty); H^1_{0,\sigma}(\Omega))\cap L^2([0, +\infty); {H}^2(\Omega)),\,\,\sqrt{t}u_t\in L^2([0, +\infty); H^1(\Omega)),\\
		&\sqrt{t} \nabla u\in L^\infty([0,+\infty); L^2(\Omega))\cap L^2([0,+\infty); \dot{H}^1(\Omega)),\\
		&{t}\nabla^2u,{t}\nabla^2b\in L^\infty([0,+\infty); L^2(\Omega))\cap L^2([0, +\infty); L^6(\Omega)),\\
		&b\in L^\infty([0,+\infty); L^4\cap H^1_{0,\sigma}(\Omega))\cap L^2([0,+\infty); {H}^2(\Omega)),\,\,b_t\in L^2([0,+\infty); L^2(\Omega)),\\
		&\sqrt{t} \nabla b\in L^\infty([0,+\infty); L^2(\Omega))\cap L^2([0,+\infty); \dot{H}^1(\Omega)),\,\,\sqrt{t}b_t\in L^2([0, +\infty); H^1(\Omega)).
	\end{aligned}
	\right.
\end{align}
Moreover, for any $t>0$, $(\rho, u, b)$ has the following time-asymptotically decay rates:
\begin{align}\label{1-10}
\begin{aligned}
&\|\nabla u(\cdot, t)\|_{L^2(\Omega)}+\|\nabla b(\cdot, t)\|_{L^2(\Omega)}\leq C(1+t)^{-\frac{1}{2}},\\
&\|\nabla^2 u(\cdot, t)\|_{L^2(\Omega)}+\|\nabla^2 b(\cdot, t)\|_{L^2(\Omega)}\leq Ct^{-1},\\
&\|\sqrt{\rho}u_t(\cdot, t)\|_{L^2(\Omega)}+\|b_t(\cdot, t)\|_{L^2(\Omega)}\leq Ct^{-1},
\end{aligned}
\end{align}
where the genuine constant $C$ depends only on $\bar{\rho}$, $\|\rho_0-\bar{\rho}\|_{L^{\f32}(\Omega)}$,  $\|\nabla {\rho}_0\|_{L^2(\Omega)}$, $\|u_0\|_{H^1(\Omega)}$ and $\|b_0\|_{H^1(\Omega)}$.
\end{theorem}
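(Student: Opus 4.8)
The plan is to run the standard programme for inhomogeneous models admitting vacuum --- local existence, global a priori bounds, continuation, uniqueness, and decay --- but with the global bounds driven entirely by the axisymmetric structure together with the geometry of the exterior cylinder, so that no smallness of the data is ever required. This is the feature that, as the authors emphasize, produces a genuinely large 3D solution.

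First I would build a local-in-time strong solution. Because vacuum is allowed and no compatibility condition \eqref{1-5} is assumed, I would regularize the density away from zero (solving with $\rho_0+\delta$ in place of $\rho_0$), treat the resulting non-degenerate system by a linearization/iteration scheme in the spirit of \cite{b1.12}, and then pass to the limit $\delta\to0$ using bounds uniform in $\delta$. This produces a solution on a maximal interval $[0,T^*)$ together with a blow-up criterion: if $T^*<\infty$, then a controlling quantity such as $\|\nabla u\|_{L^2}+\|\nabla b\|_{L^2}+\|\nabla\rho\|_{L^2}$ must become unbounded as $t\uparrow T^*$.

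The core is the hierarchy of global-in-time estimates, where the exterior-cylinder geometry is decisive. Since $r\geq1$ on $\Omega$, all the coefficients $1/r$ and $1/r^2$ that appear in the cylindrical form of the Laplacian and of the convective terms (for instance the centrifugal term $-(u^\theta)^2/r$) are bounded, so the axis singularity that blocks whole-space axisymmetric analysis is absent; the Dirichlet condition on $r=1$ additionally supplies Hardy- and Poincar\'e-type inequalities controlling weighted quantities like $\|u/r\|_{L^2}$ by $\|\nabla u\|_{L^2}$. I would then proceed through: (i) the basic energy law, giving $\sqrt\rho\,u,\,b\in L^\infty_tL^2$ and $\nabla u,\nabla b\in L^2_{t,x}$; (ii) transport estimates propagating $0\le\rho\le\bar\rho$ and the $L^{3/2}\cap\dot{H}^1$ bounds on $\rho-\bar\rho$; (iii) an $L^4$ estimate for $b$ (testing the induction equation against $|b|^2b$) together with first-order estimates obtained by testing the momentum equation with $u_t$ and the induction equation with $b_t$, yielding $\nabla u,\nabla b\in L^\infty_tL^2$ and $\sqrt\rho\,u_t,\,b_t\in L^2_{t,x}$; and (iv) time-weighted estimates producing the $\sqrt t$- and $t$-weighted bounds listed in \eqref{1-9}. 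At every stage the coupling terms $b\cdot\nabla b$ and $b\cdot\nabla u$ must be absorbed, and the bounded $1/r$ together with the geometric Hardy/Poincar\'e inequalities is precisely what lets one close these estimates with constants independent of $T$ rather than relying on smallness.

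Once the global bounds are available, continuation contradicts the blow-up criterion and furnishes the global solution with the regularity \eqref{1-9}. Uniqueness I would obtain by an energy estimate on the difference of two solutions inside the class \eqref{1-9}, using the established regularity to close a Gronwall argument that is delicate because the density is only transported. Finally, the decay rates \eqref{1-10} are read off from the weighted bounds: $\sqrt t\,\nabla u,\ \sqrt t\,\nabla b\in L^\infty_tL^2$ yields $t^{-1/2}$ decay, which together with the uniform $H^1$ bound upgrades to $(1+t)^{-1/2}$, while $t\,\nabla^2u,\ t\,\nabla^2b$ and the corresponding $t\sqrt\rho\,u_t,\ tb_t$ bounds give the $t^{-1}$ rates. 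The step I expect to be the main obstacle is (iv): closing the highest time-weighted estimates uniformly in $T$ in the simultaneous presence of vacuum and the full magnetic coupling, where smallness is unavailable and one must extract every cancellation afforded by the axisymmetric structure and by the positivity of $r-1$.
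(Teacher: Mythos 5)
Your skeleton (local existence via density regularization in the spirit of Li's paper, then the energy/$H^1$/time-weighted hierarchy, continuation, uniqueness by a Gronwall argument on differences, decay read off the weighted bounds) matches the architecture of the paper's proof. But the engine you propose for closing the global estimates without smallness is not the right one, and with only the tools you name, your steps (iii) and (iv) fail. Boundedness of the coefficients $1/r$, $1/r^2$ and Hardy--Poincar\'e inequalities do not change the \emph{exponents} in the Sobolev and Gagliardo--Nirenberg inequalities: with 3D exponents, $\|u\|_{L^4}\lesssim\|u\|_{L^2}^{1/4}\|\nabla u\|_{L^2}^{3/4}$ and $\|\nabla u\|_{L^4}\lesssim\|\nabla u\|_{L^2}^{1/4}\|\nabla^2 u\|_{L^2}^{3/4}$, so testing the momentum equation with $u_t$ and absorbing $\|\nabla^2u\|_{L^2}$ through the Stokes estimate of Lemma \ref{lem.B5} leaves a differential inequality in which $\frac{d}{dt}\|\nabla u\|_{L^2}^2$ is bounded by a \emph{superlinear} power of $\|\nabla u\|_{L^2}^2$ (with coefficients that are not integrable in time). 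Such an inequality only yields a bound for small data or small time --- this is exactly the classical 3D obstruction, and nothing in your list of tools removes it. There is also a second, independent degeneracy you leave unaddressed: in the presence of vacuum, $\|\sqrt{\rho}u\|_{L^2}$ does not control $\|u\|_{L^2}$, so even the 2D-type quadratic estimate for the convective term cannot be closed by a naive Gronwall argument.

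What actually closes the estimates in the paper is a dimensional-reduction observation that is strictly stronger than ``the axis singularity is absent'': for axisymmetric $f$ on the exterior of the cylinder one writes $\|f\|_{L^p(\Omega)}^p=2\pi\iint |r^{1/p}f|^p\,dr\,dz$, and since $r\geq1$ the weight $r^{1/p}$ and its derivatives are harmless, so $r^{1/p}f$ can be treated as a genuinely \emph{two-dimensional} function on the exterior of the unit disk. This produces (a) Gagliardo--Nirenberg inequalities with 2D exponents and \emph{no lower-order term} (Lemmas \ref{newgn} and \ref{lem.B4}), in particular $\|u\|_{L^4}\lesssim\|u\|_{L^2}^{1/2}\|\nabla u\|_{L^2}^{1/2}$, and (b) a Desjardins-type logarithmic inequality (Lemmas \ref{lem.B30} and \ref{lem.B3}),
\begin{align*}
\|\sqrt{\rho}u\|^2_{L^4(\Omega)}\leq C(\bar{\rho})\left(1+\|\sqrt{\rho}u\|_{L^2(\Omega)}\right)\|u\|_{H^1(\Omega)}\sqrt{\ln\left(2+\|u\|^2_{H^1(\Omega)}\right)},
\end{align*}
which is precisely what handles the vacuum, since it trades $\|u\|_{L^2}$ for $\|\sqrt{\rho}u\|_{L^2}$ at the cost of a logarithm. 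With these two ingredients the $H^1$ estimate reduces to $f'\leq Cgf+Cgf\ln f$ with $g\in L^1_t$ furnished by the energy inequality, i.e.\ the logarithmic Gronwall argument \eqref{3.2-16}--\eqref{3.2-019}; the 2D exponents are equally indispensable for absorbing the magnetic coupling terms and for obtaining bounds independent of $T$ (hence the decay rates), because the 2D inequalities carry no additive lower-order term $\tilde{C_2}\|f\|_{L^s}$. These two lemmas --- which you never formulate and which bounded $1/r$ plus Hardy/Poincar\'e do not imply --- are the actual content of the proof; without them the programme you outline stalls at the first key estimate.
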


\begin{remark}
	Theorem \ref{theorem.1} provides the first unique large solution for 3D incompressible inhomogeneous MHD equations, even for the magnetofluids with special structures. If $\rho\equiv 1$, Theorem \ref{theorem.1} implies the global well-posedness result for the corresponding 3D homogeneous model.
\end{remark}

\begin{remark}
	If the initial density has compact support, then \eqref{1-7} for density is satisfied naturally, so
	Theorem \ref{theorem.1} still holds under this case.
\end{remark}

\begin{remark}
Theorem \ref{theorem.1} generalizes the result in \cite{GWX-2021} for 3D incompressible inhomogeneous Navier-Stokes equations in four aspects by assuming $b=0$. {\it The first}, we remove the compatibility conditions imposed on the initial data there. {\it The second}, the initial assumptions \eqref{1-7} are much weaker than \cite{GWX-2021}, where $\rho_0-\bar{\rho}\in L^{\frac{3}{2}}\cap{H}^2(\Omega)$ and $u_0\in H^1_{0, \sigma}\cap H^2(\Omega)$ are required there. {\it The third}, due to the a priori estimates obtained here is independent of time $T$ (see Lemmas \ref{lem.B3.01}-\ref{lem.B3.4} for detail), the existence time of strong solution in Theorem \ref{theorem.1} can reach infinity, rather than any fixed $T$ in \cite{GWX-2021}. {\it The fourth}, compared with \cite{GWX-2021}, we further establish the algebraic decay rates for the time and spatial derivatives of both velocity and magnetic fields. If without magnetic field, the decay rates of velocity field can be updated to exponential, the readers can refer to \cite{MinL2025} by the second author of this paper for details.
\end{remark}

\begin{remark}
	When the axis is not included in the domain, the corresponding issue inherits more features of 2D flows. Some key estimates, such as Lemmas \ref{lem.B30} and \ref{newgn} depend on this property crucially,  therefore it is hard to extend this result to 3D whole space or any domain including the axis directly.
\end{remark}

\noindent{\bf The main idea:} In the process of solving this issue, there are two main challenges appear. {\it The first one} is how to overcome the degeneracy caused by the vacuum, which are summarized as follows.
\begin{enumerate}[\quad 1.]
	\item In the literature \cite{GWX-2021}, for the axisymmetric velocity filed flowing in the exterior of a cylinder, the authors established the following new inequality
	\begin{align}\label{key1}
		&\int_s^T\|\nabla u\|_{L^\infty}^2\,d\tau\notag\\
		\leq& C\|\nabla u\|^2_{L^2([s,T];L^2)}\ln\left[e+\|\nabla u\|^2_{L^\infty([0,T];L^2)}+\|\sqrt{\rho} u_t\|^2_{L^2([0,T];L^2)}\right]+C,
	\end{align}
and then use \eqref{key1} to get the key $L^\infty([0,T];L^2)$ estimates of $\nabla u$.
	\item However, the method taken in \cite{GWX-2021} and \eqref{key1} does not work for our issue. This is because for us, their approach will make troubles in removing the compatibility conditions and establishing the algebraic decay rates. To get over this obstacle, we establish the new inequality
	\begin{align}\label{key2}
		\|\sqrt{\rho}u\|^2_{L^4(\Omega)}
		\leq C(\bar{\rho})\left(1+\|\sqrt{\rho}u\|_{L^2(\Omega)}\right)\|u\|_{H^1(\Omega)}\sqrt{\ln\left(2+\|u\|^2_{H^1(\Omega)}\right)},
	\end{align}
	which only holds for 2D flows before. With the help of \eqref{key2}, the trouble caused by the vacuum can be solved.
\end{enumerate}

{\it The second one} is the new nonlinear coupling terms involving the velocity and magnetic fields. Evidently, \eqref{key2} does not work for these terms.
Because the exterior of a cylinder is a 3D model with non-compact boundary, it is not the exterior domain. From here, according to classical Gagliardo-Nirenberg inequalities, there holds
\begin{align}\label{key10}
	\|\nabla^jf\|_{L^p(\Omega)}\leq \tilde{C_1}\|\nabla^mf\|_{L^r(\Omega)}^\alpha\|f\|_{L^q(\Omega)}^{1-\alpha}+\tilde{C_2}\|f\|_{L^s(\Omega)},
\end{align}
where the index $\alpha$ is as same as ordinary 3D flows. Trivially, \eqref{key10} can not help us establishing the unique global strong solution. However, we notice that the components of axisymmetric flows only depend on variables $r$ and $z$ and $r\geq 1$ always holds true in $\Omega$. Under this case, the integrands can be seen as the 2D functions in the exterior of a unit circle. Based on this observation, we can derive the following new Gagliardo-Nirenberg type inequalities holding for our flow, that is 
\begin{align}\label{key20}
	\|\nabla^ju\|_{L^p(\Omega)}\leq \tilde{C}\|\nabla^mu\|_{L^r(\Omega)}^\alpha\|u\|_{L^q(\Omega)}^{1-\alpha},
\end{align}
where 
\begin{align}\label{key30}
	\frac{1}{p}=\frac{j}{2}+\alpha\left(\frac{1}{r}-\frac{m}{2}\right)+\left(1-\alpha\right)\frac{1}{q}, \qquad \frac{j}{m} \leq \alpha \leq 1.
\end{align}
Compared with \eqref{key10}, there are two advantages in
\eqref{key20} and \eqref{key30}. The first and also most important is that the index $\alpha$ is the same as 2D flows, which provides us the foundation to deal with the new nonlinear terms and derive the global a priori estimates. The second is that the norm $\tilde{C_2}\|f\|_{L^s(\Omega)}$ disappears, that plays an important role is establishing the uniform estimates independent of $T$ and time-asymptotically algebraic decay rates for strong solutions. Thanks to the two key findings and very delicate a priori estimates, we can achieve our goal.

This paper is organized as follows. In section 2, we introduce some notations and technical lemmas used for the proof of main theorems. In section 3, we will concentrate on the proof of Theorem \ref{theorem.1}. Section 4 is devoted to the local well-posedness of strong solutions.

\section{Preliminary}
\par In this section, we first recall some well-known inequalities, the stokes estimates and then use them to present some new inequalities only for axisymmetric flows, which will play the key role in the subsequent proofs. To start with, we introduce the notations and conventions used throughout this paper. First of all, we take the notation
\begin{align}
\int f dx\triangleq\int\limits_{\Omega} f dx\nonumber,
\end{align}
for simplicity. For $1\leq p\leq\infty$ and $k\geq1$, we use $L^p=L^p(\Omega)$ and $W^{k, p}=W^{k, p}(\Omega)$ to denote the standard Sobolev spaces. When $p=2$, we also use the shorthand notations $H^k=W^{k, 2}(\Omega)$. Denote the closure of $C_0^{\infty}(\Omega)$ in $H^1(\Omega)$ by $H_0^1$ and $H_{0, \sigma}^1$ to be the closure of $C_{0, \sigma}^{\infty}(\Omega)=\{u\in C_0^{\infty}(\Omega): \mathrm{div}\,u=0,\,\,\mathrm{in}\,\,\Omega\}$ in $H^1(\Omega)$.

\subsection{Some well-known tools} The first lemma to give is the following classical Gagliardo-Nirenberg inequalities (see for example \cite{exinequality}).

\begin{lemma} \label{lem.B2}
Let $ \Omega \subset \R^n $ be a domain, $ 1 \leq p, q, r \leq \infty $, $ \alpha > 0 $ and $ j < m $ be nonnegative integers such that
\begin{align}\label{re0}
\frac{1}{p} - \frac{j}{n} = \alpha\,\left( \frac{1}{r} - \frac{m}{n} \right) + (1 - \alpha)\frac{1}{q}, \qquad \frac{j}{m} \leq \alpha \leq 1,
\end{align}
then every function $ f: \Omega \mapsto \mathbb{R} $ that lies in $ L^q(\Omega) $ with $m^{\rm th}$ derivative in
$ L^r(\Omega) $ also has $j^{\rm th}$ derivative in
$ L^p (\Omega) $. Furthermore, it holds that	
	\begin{align}\label{2-2}
		\|\nabla^jf\|_{L^p(\Omega)}\leq \tilde{C_1}\|\nabla^mf\|_{L^r(\Omega)}^\alpha\|f\|_{L^q(\Omega)}^{1-\alpha}+\tilde{C_2}\|f\|_{L^s(\Omega)},
	\end{align}
where $ s>0 $ is arbitrary and the constants $\tilde{C_1}$ and $\tilde{C_2}$ depend upon $\Omega$ and the indices $ n, m, j, q, r, s, \alpha $ only. Specifically, the constant $\tilde{C_2}$ can be equal to zero either if $u\in W_0^{m,p}(\Omega)$, or $\Omega=\mathbb{R}^n$ or $\Omega$ be an exterior domain.
\end{lemma}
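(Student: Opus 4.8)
The statement is the classical Gagliardo--Nirenberg inequality, so the plan is to reduce the estimate on the general domain $\Omega$ to the homogeneous inequality on the whole space and then account for the boundary through an extension argument, ultimately citing \cite{exinequality} for the full verification. First I would treat the model case $\Omega=\R^n$, where the correction term $\tilde C_2\|f\|_{L^s}$ is absent. Here the relation \eqref{re0} is nothing but the dimensional balance forced by the scaling $f\mapsto f(\lambda\,\cdot)$: demanding that both sides of \eqref{2-2} transform identically under this one-parameter family produces exactly the constraint $\frac1p-\frac jn=\alpha\big(\frac1r-\frac mn\big)+(1-\alpha)\frac1q$. For the inequality itself I would first settle the pure interpolation case $j=0$: with $p^\ast$ defined by $\frac1{p^\ast}=\frac1r-\frac mn$, the homogeneous Sobolev embedding $\dot{W}^{m,r}\hookrightarrow L^{p^\ast}$ gives $\|f\|_{L^{p^\ast}}\le C\|\nabla^m f\|_{L^r}$, and since \eqref{re0} reads $\frac1p=\frac\alpha{p^\ast}+\frac{1-\alpha}q$, the log-convexity bound $\|f\|_{L^p}\le\|f\|_{L^{p^\ast}}^\alpha\|f\|_{L^q}^{1-\alpha}$ closes this case. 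The derivative cases $j\ge1$ then follow from the one-dimensional estimate $\|D^j f\|_{L^p(\R)}\le C\|D^m f\|_{L^r}^\alpha\|f\|_{L^q}^{1-\alpha}$, obtained from the fundamental theorem of calculus together with H\"older's inequality, iterated across the coordinate directions and combined with the $j=0$ step.

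For a general domain $\Omega$ I would invoke a bounded extension operator $E:W^{m,r}(\Omega)\to W^{m,r}(\R^n)$, available whenever $\partial\Omega$ is regular enough, satisfying $\|\nabla^k Ef\|_{L^\rho(\R^n)}\le C\big(\|\nabla^k f\|_{L^\rho(\Omega)}+\|f\|_{L^\rho(\Omega)}\big)$ on every relevant Lebesgue scale. Applying the whole-space inequality to $Ef$ and restricting back to $\Omega$ yields \eqref{2-2}, the price of the extension being precisely the lower-order term $\tilde C_2\|f\|_{L^s(\Omega)}$ coming from the zeroth-order part of $E$. The three situations in which $\tilde C_2$ may be discarded are exactly those in which no genuine extension is needed: if $f\in W_0^{m,p}(\Omega)$ one extends by zero, an isometric extension with no lower-order loss; if $\Omega=\R^n$ no extension is required at all; and if $\Omega$ is an exterior domain, its invariance under dilations allows the scaling argument to be rerun directly on $\Omega$, so the purely homogeneous form survives.

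The main obstacle I anticipate is not the whole-space inequality, which is entirely classical, but rather the bookkeeping of the exceptional exponent configurations and of the lower-order term on domains. In particular one must check that a single extension operator is simultaneously bounded on all the Lebesgue scales entering \eqref{re0}, and one must pin down precisely which geometric hypothesis on $\Omega$ (a cone condition, Lipschitz regularity, or scale invariance) guarantees either the extension or the vanishing of $\tilde C_2$; in addition, the borderline cases where $\alpha=\frac jm$ is forbidden and only $\frac jm\le\alpha<1$ is admissible require separate care. Since all of this is standard, I would defer to \cite{exinequality} for the details, using the scaling-plus-extension scheme above only to make transparent the origin of the relation \eqref{re0} and of the correction $\tilde C_2\|f\|_{L^s(\Omega)}$.
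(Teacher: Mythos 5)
The paper gives no proof of this lemma at all: it is quoted as a classical result, with the entire burden deferred to the reference \cite{exinequality} (Crispo--Maremonti). Your proposal, which sketches the standard scaling/embedding/extension argument and then likewise defers to \cite{exinequality}, is therefore consistent with the paper's treatment, and the whole-space part of your sketch (scaling forces \eqref{re0}; the case $j=0$ via $\dot W^{m,r}\hookrightarrow L^{p^\ast}$ plus H\"older interpolation; $j\ge 1$ via Nirenberg's one-dimensional estimates) is the standard and correct route.

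One claim in your sketch is, however, genuinely wrong: you justify dropping $\tilde C_2$ on an exterior domain by asserting that such a domain is ``invariant under dilations,'' so that the scaling argument can be rerun on $\Omega$ itself. Exterior domains are not dilation invariant --- dilating $\{|x|>1\}$ by $\lambda$ gives $\{|x|>\lambda\}$, a different domain --- so the homogeneous inequality cannot be obtained this way. The absence of the lower-order term on exterior domains is exactly the nontrivial content of \cite{exinequality}, whose proof requires a careful decomposition near the boundary and at infinity, not a scaling symmetry. A smaller bookkeeping issue: the extension-operator argument naturally produces a lower-order term in a norm tied to the extension bounds (e.g. $\|f\|_{L^r(\Omega)}$), and obtaining the stated form with \emph{arbitrary} $s>0$ requires an additional interpolation step that your sketch does not address. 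Since both you and the paper ultimately cite \cite{exinequality} for the full statement, neither point undermines the lemma's use in the paper, but the dilation-invariance heuristic should not be presented as the reason the exterior-domain case holds.
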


Due to the region we consider here is a unbounded domain with Drichlet boundary condition, to derive the high order derivative estimates of velocity field, we need the following estimates on the Stokes equations in \cite{Cai-LY} (see also Theorem V.4.8 in \cite{b4}) with the constant $C$ independent of the area of domain.

\begin{lemma}\label{lem.B5}
	Let $\Omega$ be a domain of $\mathbb{R}^3$, whose boundary is uniformly of class $C^3$. Assume that $u\in H_{0, \sigma}^1(\Omega)$ is a weak solution to the following Stokes equations,
	\begin{align}\label{2.5-1}
		\left\{
		\begin{aligned}
			&-\triangle u+\nabla p=F,\quad &\mathrm{in}\,\,\Omega,\\
			&\mathrm{div}\,u=0,\quad &\mathrm{in}\,\,\Omega,\\
			&u=0,\quad &\mathrm{on}\,\,\partial\Omega.
		\end{aligned}
		\right.
	\end{align}
	Then for any $f\in L^p(\Omega)$ with $p\in(1, \infty)$, it holds that
	\begin{align}\label{2.5-1}
		\|\nabla^2u\|_{L^p(\Omega)}+\|\nabla p\|_{L^p(\Omega)}\leq C\|F\|_{L^p(\Omega)},
	\end{align}
	where the genuine constant $C$ depends only on $p$ and the $C^3$-regularity of $\partial\Omega$ (not on the size of $\partial\Omega$ or $\Omega$).
\end{lemma}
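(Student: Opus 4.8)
The plan is to prove the estimate by the classical localization scheme for the stationary Stokes system, keeping careful track at every step of the fact that the constant must depend only on $p$ and on the \emph{uniform} $C^3$ character of $\partial\Omega$, and never on the diameter or measure of $\Omega$. The structural reason such size independence can be reached is that the system \eqref{2.5-1} carries no zeroth-order term and is invariant under the scaling $u(x)\mapsto u(\lambda x)$, $p(x)\mapsto \lambda\,p(\lambda x)$, $F(x)\mapsto\lambda^2 F(\lambda x)$; under this rescaling $\|\nabla^2u\|_{L^p}$ and $\|F\|_{L^p}$ both acquire the identical factor $\lambda^{2-3/p}$, so the target inequality is scale-invariant. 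Consequently the only geometric quantities that may legitimately enter $C$ are the dimensionless bounds describing the uniform $C^3$ regularity, and one is free to normalise the local length scale to $1$.

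First I would cover $\Omega$ by balls of a fixed radius $\delta$, determined by the uniform $C^3$ radius of $\partial\Omega$, so that each ball is either interior (contained in $\Omega$ at a definite distance from $\partial\Omega$) or centred at a boundary point where $\partial\Omega$ can be straightened by a $C^3$ diffeomorphism whose derivative bounds are controlled solely by the uniform regularity. Choosing a subordinate partition of unity $\{\varphi_i\}$ with $\|\nabla^k\varphi_i\|_{L^\infty}\leq C\delta^{-k}$ and finite overlap independent of $\Omega$, the pair $(\varphi_i u,\varphi_i p)$ solves a Stokes system whose forcing is $\varphi_i F$ together with commutator terms of the form $[\Delta,\varphi_i]u$ and $(\nabla\varphi_i)\,p$, lower order in $u$ and linear in $p$. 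For the interior pieces I would use the whole-space representation: taking the divergence of the momentum equation gives $\Delta p=\operatorname{div}F$, so that $\nabla p=\nabla(-\Delta)^{-1}(-\operatorname{div}F)$ is a composition of Riesz transforms, and then $\Delta u=\nabla p-F$ together with Calder\'on--Zygmund theory for the Laplacian yields $\|\nabla^2u\|_{L^p}+\|\nabla p\|_{L^p}\leq C\|F\|_{L^p}$ with a universal $C$; equivalently one may convolve $F$ with the second derivatives of the Oseen tensor, whose kernels are of Calder\'on--Zygmund type for $1<p<\infty$. For the boundary pieces, after flattening I would invoke the explicit Green tensor of the half-space Stokes problem under the no-slip condition (Solonnikov's solution), whose associated operators again satisfy $L^p$ bounds depending only on $p$, the $C^3$ change of variables producing only lower-order errors with coefficients bounded by the uniform regularity.

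Summing the local estimates over $i$ with the finite-overlap property gives
\[
\|\nabla^2u\|_{L^p(\Omega)}+\|\nabla p\|_{L^p(\Omega)}\leq C\Big(\|F\|_{L^p(\Omega)}+\delta^{-1}\|\nabla u\|_{L^p(\Omega)}+\delta^{-2}\|u\|_{L^p(\Omega)}+\delta^{-1}\|p\|_{L^p(\Omega)}\Big),
\]
and the main obstacle --- the only place where real care about size independence is needed --- is to delete the three lower-order terms on the right without reintroducing the diameter of $\Omega$. Here I would argue by contradiction and scaling: were the clean estimate to fail with a size-independent constant, one could extract solutions $(u_k,p_k,F_k)$ on domains $\Omega_k$, normalised so that $\|\nabla^2u_k\|_{L^p}+\|\nabla p_k\|_{L^p}=1$ while $\|F_k\|_{L^p}\to0$, and rescale each to unit length scale. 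The uniform local estimates together with the uniform $C^3$ geometry give compactness of the rescaled sequence on bounded sets (via Rellich on each piece, with uniform control of the tails), and the limit would be a nontrivial $L^p$ solution of the homogeneous Stokes system on the whole space, a half-space, or the model cylindrical exterior with vanishing forcing, contradicting the Liouville-type uniqueness available for such solutions; testing the weak formulation with $u\in H^1_{0,\sigma}(\Omega)$ and normalising the pressure up to its gradient ensures that only $\|\nabla p\|_{L^p}$, not $\|p\|_{L^p}$, survives. Carrying out these absorptions with all constants expressed purely through $p$ and the uniform $C^3$ bounds yields the stated estimate with $C$ independent of the size of $\partial\Omega$ and of $\Omega$.
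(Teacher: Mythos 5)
The paper offers no proof of Lemma \ref{lem.B5} to compare against: it is quoted verbatim from \cite{Cai-LY} (see also Theorem V.4.8 of \cite{b4}), so your argument has to stand on its own as a complete proof. Its first half essentially does: the scaling observation, the covering at the scale fixed by the uniform $C^3$ radius, the interior Calder\'on--Zygmund/Oseen estimates and the flattened half-space (Solonnikov) estimates are standard and would produce the localized inequality with lower-order terms (modulo the divergence correction $\mathrm{div}(\varphi_i u)=\nabla\varphi_i\cdot u\neq 0$, which you omit but which is routine). The proof, however, lives or dies in the last step --- deleting the lower-order terms with a size-independent constant --- and there your argument has genuine gaps.

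Concretely: (i) your contradiction argument normalizes only the top-order quantity, $\|\nabla^2u_k\|_{L^p}+\|\nabla p_k\|_{L^p}=1$, but then the right-hand side terms $\delta^{-2}\|u_k\|_{L^p(\Omega_k)}$, $\delta^{-1}\|\nabla u_k\|_{L^p(\Omega_k)}$, $\delta^{-1}\|p_k\|_{L^p(\Omega_k)}$ are not known to be finite, let alone uniformly bounded: membership in $H^1_{0,\sigma}$ is $L^2$-based and on an unbounded domain gives no $L^p$ control for $p\neq2$ (and $\|u_k\|_{L^2}$ itself is not normalized), while the pressure is defined only up to a constant, with no global normalization making $\|p_k\|_{L^p(\Omega_k)}$ finite; without bounds on these quantities no limit can be extracted. (ii) The patchwise pressure normalization you propose does not rescue this: Poincar\'e on a patch of radius $\delta$ gives $\delta^{-1}\|p-c_i\|_{L^p(B_i)}\leq C\|\nabla p\|_{L^p(B_i)}$ with $C$ of order one \emph{independent of} $\delta$, so after summing, the pressure commutator reappears on the right as $C\|\nabla p\|_{L^p(\Omega)}$ with a non-small constant that cannot be absorbed into the left-hand side; your remark that ``only $\|\nabla p\|_{L^p}$ survives'' defeats the absorption rather than completing it, which is exactly why the classical bounded-domain proofs instead carry a zero-mean $\|p\|_{L^p}$ to a Rellich compactness step. (iii) That compactness step is unavailable here: Rellich fails on unbounded domains, your ``uniform control of the tails'' is asserted with no decay information, and the lower-order mass can escape to infinity, so the weak limit may be trivial and no contradiction results. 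Repairing this requires a genuine concentration/recentering (blow-up) analysis; and since the statement quantifies over \emph{all} uniformly $C^3$ domains and your sequence involves varying domains $\Omega_k$, the recentered limits range over arbitrary uniformly $C^3$ domains --- not merely the whole space, the half-space and the model cylindrical exterior --- so the ``Liouville-type uniqueness'' you invoke for that class is essentially the lemma itself, making the argument circular at the decisive point. These are precisely the difficulties that \cite{Cai-LY} and \cite{b4} resolve with substantial additional work, and they are why the paper cites those references instead of proving the lemma.
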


Finally, we state a Gr\"{o}nwall's type inequality (see \cite{b1.12} for details), which is used to prove the uniqueness of strong solutions and a classical Lemma originating from Desjardins in Lemma 1 of \cite{Des-1997}.

\begin{lemma}\label{lem.B6}
	Given a positive time T and nonnegative functions f, g, G on [0, T], with f and g being absolutely continuous on $[0,T]$. Suppose that
	\begin{align*}
		\left\{
		\begin{aligned}
			&\frac{d}{dt}\,f(t)\leq A\sqrt{G(t)},\\
			&\frac{d}{dt}\,g(t)+G(t)\leq\alpha(t)g(t)+\beta(t)f^2(t),\\
			&f(0)=0,
		\end{aligned}
		\right.
	\end{align*}
	a.e. on $(0, T)$, where $A$ is a positive constant, $\alpha$ and $\beta$ are two nonnegative functions satisfying
	\begin{align*}
		\alpha(t)\in L^1(0, T),\quad\mathrm{and}\quad t\beta(t)\in L^1(0, T).
	\end{align*}
	Then, the following estimates
	\begin{align*}
		f(t)\leq A\sqrt{g(0)}\sqrt{t}e^{\frac{1}{2}\int_0^t\left(\alpha(s)+A^2s\beta(s)\right)\,ds},
	\end{align*}
	and
	\begin{align*}
		g(t)+\int_0^tG(s)\,ds\leq g(0)e^{\int_0^t\left(\alpha(s)+A^2s\beta(s)\right)\,ds},
	\end{align*}
	 hold for $t\in[0, T]$, which, in particular, imply $f\equiv0$, $g\equiv0$ and $G\equiv0$ provided $g(0)=0$.
\end{lemma}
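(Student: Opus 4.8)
The plan is to decouple the system: first I would convert the gradient-type control $\frac{d}{dt}f \le A\sqrt{G}$ into an integral bound on $f^2$, and then feed this bound into a single scalar Grönwall argument for the combined quantity $\Phi(t) := g(t) + \int_0^t G(s)\,ds$, whose nonnegativity lets me control both $g$ and $\int_0^t G$ at once.

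First I would integrate the first differential inequality from $0$ to $t$. Since $f(0)=0$, the Cauchy--Schwarz inequality gives
\[
f(t) = \int_0^t f'(s)\,ds \le A\int_0^t \sqrt{G(s)}\,ds \le A\sqrt{t}\,\Bigl(\int_0^t G(s)\,ds\Bigr)^{1/2},
\]
and hence, reading this at time $s$ and using $\int_0^s G \le \Phi(s)$,
\[
f^2(s) \le A^2 s \int_0^s G(\tau)\,d\tau \le A^2 s\,\Phi(s), \qquad 0 \le s \le t.
\]
Next I would integrate the second inequality: writing $H(t):=\int_0^t G(s)\,ds$, it becomes $g(t) + H(t) \le g(0) + \int_0^t \bigl[\alpha(s)g(s) + \beta(s)f^2(s)\bigr]\,ds$. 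Substituting the bound on $f^2$ and using $g(s)\le\Phi(s)$ together with the nonnegativity of every quantity, the integrand is dominated by $\bigl(\alpha(s) + A^2 s\,\beta(s)\bigr)\Phi(s)$, so that
\[
\Phi(t) \le g(0) + \int_0^t \bigl(\alpha(s) + A^2 s\,\beta(s)\bigr)\Phi(s)\,ds.
\]

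The hypotheses $\alpha\in L^1(0,T)$ and $t\beta(t)\in L^1(0,T)$ guarantee that the coefficient $\alpha + A^2 s\beta$ is integrable on $[0,T]$, so the standard integral Grönwall inequality applies and yields $\Phi(t) \le g(0)\,\exp\bigl(\int_0^t(\alpha(s)+A^2 s\beta(s))\,ds\bigr)$, which is exactly the second claimed estimate. Inserting this back into $f^2(t)\le A^2 t\,\Phi(t)$ and taking square roots immediately gives the first estimate. Finally, when $g(0)=0$ the right-hand side vanishes, forcing $\Phi\equiv 0$; since $g$, $G$, and $f$ are all nonnegative, this gives $g\equiv 0$, $G\equiv 0$ a.e., and $f\equiv 0$.

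The argument is essentially routine; the only point demanding care is the bookkeeping in the coupling. I expect the main (minor) obstacle to be verifying that the $f^2$ term contributes the weight $A^2 s\,\beta(s)$ rather than $\beta(s)$ alone — this is precisely what turns the hypothesis $t\beta(t)\in L^1$ (rather than $\beta\in L^1$) into the integrability needed for Grönwall, and it explains why the factor $A^2 s$ appears inside the exponent of both conclusions.
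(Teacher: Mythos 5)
Your proposal is correct and follows essentially the same route as the source: the paper does not prove Lemma \ref{lem.B6} itself but cites \cite{b1.12}, and the argument there is precisely yours --- Cauchy--Schwarz on the first inequality to get $f^2(s)\le A^2 s\int_0^s G(\tau)\,d\tau$, then Gr\"onwall applied to $\Phi(t)=g(t)+\int_0^t G(s)\,ds$, followed by back-substitution to bound $f$. The only cosmetic difference is that \cite{b1.12} runs Gr\"onwall in differential form ($\Phi'(t)\le(\alpha(t)+A^2t\beta(t))\Phi(t)$) rather than your integral form; both versions tacitly use that $\int_0^t G(s)\,ds$ is finite, as it is in every application of the lemma.
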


\begin{lemma}\label{lem.B30}
	Let $B=\{(x_1, x_2)\in\mathbb{R}^2: r^2=x_1^2+x_2^2\leq1\}$ be a unit circle and $B^{C}=\mathbb{R}^2-B$ be the exterior of $B$. Supposing $u\in H_0^1(B^{C})$, then there exists a genuine constant $C$ such that
	\begin{align}\label{2.3-10}
		\|\sqrt{\rho}u\|^2_{L^4(B^{C})}
		\leq C(\bar{\rho})\left(1+\|\sqrt{\rho}u\|_{L^2(B^{C})}\right)\|u\|_{H^1(B^{C})}\sqrt{\ln\left(2+\|u\|^2_{H^1(B^{C})}\right)}.
	\end{align}
\end{lemma}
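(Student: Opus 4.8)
The plan is to follow the classical Desjardins argument (Lemma 1 of \cite{Des-1997}) by first reducing to the whole plane and then interpolating the $L^4$ norm between $L^2$ and a high Lebesgue exponent $L^p$, with the logarithm produced by optimizing in $p$. Since $u \in H_0^1(B^{C})$, I would extend $u$ by zero across the unit disk $B$ to obtain $\tilde u \in H^1(\mathbb{R}^2)$ with $\|\tilde u\|_{L^2(\mathbb{R}^2)} = \|u\|_{L^2(B^{C})}$ and $\|\nabla\tilde u\|_{L^2(\mathbb{R}^2)} = \|\nabla u\|_{L^2(B^{C})}$; this is the only place where the geometry enters. Using $0 \le \rho \le \bar\rho$, we have the pointwise bound $\|\sqrt\rho u\|_{L^p(B^{C})} \le \sqrt{\bar\rho}\,\|\tilde u\|_{L^p(\mathbb{R}^2)}$ for every $p$, so that every factor except the genuine weighted norm $\|\sqrt\rho u\|_{L^2}$ can be transferred to norms of $\tilde u$ on $\mathbb{R}^2$, where the scale-invariant two-dimensional Gagliardo--Nirenberg inequality applies with no lower-order term.

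The core input is the sharp two-dimensional inequality $\|\tilde u\|_{L^p(\mathbb{R}^2)} \le C\sqrt p\,\|\tilde u\|_{L^2}^{2/p}\|\nabla\tilde u\|_{L^2}^{1-2/p}$, valid for all $2 \le p < \infty$ with $C$ independent of $p$, the $\sqrt p$ growth being the decisive feature. Writing $v := \sqrt\rho u$ and interpolating, $\|v\|_{L^4} \le \|v\|_{L^2}^{1-\theta}\|v\|_{L^p}^{\theta}$ with $\theta = \frac{p}{2(p-2)}$, I would combine this with $\|v\|_{L^p} \le \sqrt{\bar\rho}\,\|\tilde u\|_{L^p}$ and the Gagliardo--Nirenberg bound. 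A short computation of the exponents, namely $2\theta(1-2/p) = 1$, $(2/p)\cdot 2\theta = \frac{2}{p-2}$ and $2-2\theta = 1-\frac{2}{p-2}$, yields
\[
\|\sqrt\rho u\|_{L^4}^2 \le C(\bar\rho)\,p^{\theta}\,\|\sqrt\rho u\|_{L^2}^{1-\frac{2}{p-2}}\,\|u\|_{L^2}^{\frac{2}{p-2}}\,\|\nabla u\|_{L^2},
\]
where the $L^2$ interpolation factor has preserved precisely the weighted norm $\|\sqrt\rho u\|_{L^2}$ demanded by the application. Bounding $\|u\|_{L^2},\|\nabla u\|_{L^2} \le \|u\|_{H^1}$, using $p^\theta \le C\sqrt p$ for $p \ge 8$ (since $\theta-\tfrac12 = \tfrac{1}{p-2}$), and the elementary $a^{1-\frac{2}{p-2}} \le 1+a$ for $a \ge 0$, I obtain
\[
\|\sqrt\rho u\|_{L^4}^2 \le C(\bar\rho)\,(1+\|\sqrt\rho u\|_{L^2})\,\|u\|_{H^1}\cdot \sqrt p\,\|u\|_{H^1}^{\frac{2}{p-2}}.
\]

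Finally I would choose $p = 8 + 2\ln(2+\|u\|_{H^1}^2) \ge 8$. With this choice $\frac{2}{p-2}\ln\|u\|_{H^1} = \frac{\ln\|u\|_{H^1}}{3+\ln(2+\|u\|_{H^1}^2)} \le \frac12$, so that $\|u\|_{H^1}^{2/(p-2)} \le e^{1/2}$ is an absolute constant, while $\sqrt p \le C\sqrt{\ln(2+\|u\|_{H^1}^2)}$ because $\frac{8+2\ln(2+x)}{\ln(2+x)}$ is bounded for $x \ge 0$; substituting gives exactly \eqref{2.3-10}. The main obstacle is this quantitative balancing: one must invoke the precise $\sqrt p$ growth of the Gagliardo--Nirenberg constant rather than the purely qualitative form recorded in Lemma \ref{lem.B2}, and then tune $p$ against $\|u\|_{H^1}$ so that the potentially divergent factor $\|u\|_{H^1}^{2/(p-2)}$ is neutralized at exactly the scale where $\sqrt p$ supplies the desired $\sqrt{\ln}$. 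Retaining the weight on $\|\sqrt\rho u\|_{L^2}$ through the low-exponent interpolation factor, so that the estimate degenerates correctly in the presence of vacuum, is the other point requiring care.
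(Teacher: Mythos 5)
Your proof is correct, and its first step --- extending $u$ by zero across the unit disk to get $\tilde u\in H^1(\mathbb{R}^2)$ --- is exactly the paper's entire written argument; the paper then simply cites Lemma 1 of \cite{Des-1997} for the resulting whole-plane inequality, whereas you re-derive that inequality from scratch. Your derivation (interpolating $L^4$ between $L^2$ and $L^p$ with $\theta=\frac{p}{2(p-2)}$, keeping the weight $\sqrt{\rho}$ only on the low exponent so that the factor $\|\sqrt{\rho}u\|_{L^2}$ survives, invoking the two-dimensional Gagliardo--Nirenberg inequality with the sharp constant $C\sqrt{p}$, and then choosing $p=8+2\ln(2+\|u\|_{H^1}^2)$) is precisely the standard mechanism behind Desjardins' lemma, and your bookkeeping checks out: $2\theta(1-2/p)=1$, $p^{\theta}=\sqrt{p}\,p^{1/(p-2)}\le C\sqrt{p}$ for $p\ge 8$, the exponent $1-\frac{2}{p-2}\in[0,1]$ justifies $a^{1-2/(p-2)}\le 1+a$, and your choice of $p$ gives $\|u\|_{H^1}^{2/(p-2)}\le e^{1/2}$ together with $\sqrt{p}\le C\sqrt{\ln(2+\|u\|_{H^1}^2)}$. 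What your route buys is self-containedness and transparency about where the $\sqrt{\ln}$ factor comes from; the price is that you must import (or prove) the quantitative $\sqrt{p}$ growth of the Gagliardo--Nirenberg constant, which is genuinely stronger than the qualitative statement recorded in Lemma \ref{lem.B2} --- the paper's citation route is shorter but leaves exactly that machinery hidden inside \cite{Des-1997}.
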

\begin{proof} For the case of $\mathbb{R}^2$, the corresponding inequality has been proved in Lemma 1 of \cite{Des-1997}. Due to $u=0$ on $\partial B^{C}$, to prove \eqref{2.3-10}, it suffices to take zero extension of $u$ outside $B^{C}$.
\end{proof}

\subsection{New lemmas for axisymmetric flows}

Initially, it is necessary to give the following lemma, which states that any smooth solution to \eqref{1-1}-\eqref{1-2} will keep to be axisymmetric if the initial data is.

\begin{lemma}\label{lem.B1.0}
Assume that the initial data $(\rho_0, u_0, b_0)$ is axisymmetric, then any smooth solution $(\rho, u, b)$ to the system \eqref{1-1}-\eqref{1-2} is still axisymmetric.
\end{lemma}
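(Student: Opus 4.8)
The plan is to exploit the rotational invariance of the MHD system \eqref{1-1} about the $x_3$-axis together with the uniqueness of smooth solutions. For $\phi\in[0,2\pi)$, let $R_\phi$ denote the rotation of $\mathbb{R}^3$ by angle $\phi$ about the $x_3$-axis, and define its action on a scalar field $f$ by $(\mathcal{R}_\phi f)(x)=f(R_\phi^{-1}x)$ and on a vector field $v$ by $(\mathcal{R}_\phi v)(x)=R_\phi\,v(R_\phi^{-1}x)$. By definition, a field is axisymmetric precisely when it is fixed by every $\mathcal{R}_\phi$. Since the domain $\Omega$ in \eqref{domain} is invariant under each $R_\phi$ and the boundary conditions \eqref{1-2boundary} are homogeneous, the first thing I would verify is that the rotated triple $(\mathcal{R}_\phi\rho,\mathcal{R}_\phi u,\mathcal{R}_\phi b)$, carrying the pressure $\mathcal{R}_\phi p$, again solves \eqref{1-1} on $[0,T)\times\Omega$ with the same initial and boundary data.

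Establishing this invariance is the technical core of the argument. Each operator in \eqref{1-1} commutes with $\mathcal{R}_\phi$: the Laplacian and the divergence are rotation-invariant, the time derivative commutes trivially, and the key pointwise identity is $\nabla(\mathcal{R}_\phi v)(x)=R_\phi\,(\nabla v)(R_\phi^{-1}x)\,R_\phi^{\top}$ for the Jacobian of a rotated vector field, together with $R_\phi^{\top}R_\phi=I$. Using these I would check termwise that the convective term $\mathrm{div}(\rho u\otimes u)$, the Lorentz force $b\cdot\nabla b$, and the coupling terms $u\cdot\nabla b$ and $b\cdot\nabla u$ each transform as $\mathcal{R}_\phi$ applied to the corresponding term; the coupling terms between $u$ and $b$ require no new idea beyond the convective term once the Jacobian identity above is in hand, the orthogonality $R_\phi^{\top}R_\phi=I$ being exactly what converts $R_\phi(\nabla b)R_\phi^{\top}\cdot R_\phi b$ back into $R_\phi(b\cdot\nabla b)$.

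With the invariance in place, I would invoke the hypothesis that the initial data is axisymmetric, i.e. $\mathcal{R}_\phi\rho_0=\rho_0$, $\mathcal{R}_\phi u_0=u_0$, $\mathcal{R}_\phi b_0=b_0$ for all $\phi$. Then $(\mathcal{R}_\phi\rho,\mathcal{R}_\phi u,\mathcal{R}_\phi b)$ and $(\rho,u,b)$ are two smooth solutions of \eqref{1-1}--\eqref{1-2} with \eqref{1-2boundary} sharing the same initial data, so the uniqueness of smooth solutions forces $\mathcal{R}_\phi\rho=\rho$, $\mathcal{R}_\phi u=u$ and $\mathcal{R}_\phi b=b$ for every $\phi$ and every $t$, which is exactly the assertion that $(\rho,u,b)$ stays axisymmetric. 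The main obstacle I anticipate is not the symmetry bookkeeping but the uniqueness input, since vacuum is permitted; however, because the lemma concerns \emph{smooth} solutions, it suffices to appeal to the local uniqueness furnished by the well-posedness theory developed in Section 4, which applies on the time interval on which the smooth solution lives. An alternative, more computational route would be to write \eqref{1-1} in cylindrical coordinates, show that the $\theta$-derivatives of the fields satisfy a linear system with vanishing initial data, and conclude via uniqueness for that linear system; I prefer the symmetry argument as it avoids the lengthy coordinate expansion while resting on the same uniqueness principle.
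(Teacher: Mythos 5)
Your proposal is correct and follows essentially the same route as the paper: the paper likewise verifies termwise that the system \eqref{1-1} is invariant under the rotations $R_\theta$ (computing how $\nabla\rho$, $\mathrm{div}\,u$, $\nabla u$, $\triangle u$, and the coupling terms transform, using $R_\theta^\top R_\theta=I$), and then concludes from the axisymmetry of the initial data and the uniqueness of smooth solutions that the rotated solution coincides with the original one. The only differences are notational (your active rotation operators $\mathcal{R}_\phi$ versus the paper's composition with $xR_\theta^\top$), not conceptual.
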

\begin{proof}
For any $(x_1, x_2, x_3)\in\mathbb{R}^3$, letting
\begin{align*}
r=\sqrt{x_1^2+x_2^2},\quad\theta=\arctan\frac{x_2}{x_1},\quad z=x_3.
\end{align*}
to be the cylindrical coordinate,
\begin{align*}
	e_r(\theta)=(\cos\theta, \sin\theta, 0),\quad e_{\theta}(\theta)=(-\sin\theta, \cos\theta, 0),\quad e_z(\theta)=(0, 0, 1),
\end{align*}
to be the standard basis vectors in the cylindrical coordinate, $\top$ to be the transpose and for every $\theta\in\mathbb{R}$, defining the following rotation matrix,
\begin{align*}
	R_\theta=\begin{pmatrix}
		e_r(\theta)\\
		e_{\theta}(\theta)\\
		e_z(\theta)
	\end{pmatrix}.
\end{align*}

The main thing to prove is the rotation invariance of system \eqref{1-1} under $R_\theta$. To this end, we set
\begin{align*}
&\tilde{\rho}=\rho(y(x))=\rho(xR_\theta^\top),\quad \tilde{u}=u(y(x))R_\theta= u(xR_\theta^\top)R_\theta,\nonumber\\
&\tilde{p}=p(y(x))=p(xR_\theta^\top),\quad \tilde{b}=b(y(x))R_\theta^\top= b(xR_\theta^\top)R_\theta,
\end{align*}
and assume $(\tilde{\rho}, \tilde{u}, \tilde{b})$ to be the smooth solution with axisymmetric initial data $(\rho_0, u_0, b_0)$. Performing some basic calculations, it is clear that
\begin{align}\label{2.1-2}
\nabla\tilde{\rho}=&(\partial_{y_1}\rho\cos\theta-\partial_{y_2}\rho\sin\theta,\,\, \partial_{y_1}\rho\sin\theta+\partial_{y_2}\rho\cos\theta,\,\, \partial_{y_3}\rho)\nonumber\\
=&(\partial_{y_1}\rho,\,\, \partial_{y_2}\rho,\,\, \partial_{y_3}\rho)\begin{pmatrix}
	\cos\theta&\sin\theta&0\\
	-\sin\theta&\cos\theta&0\\
	0&0&1
\end{pmatrix}=\nabla\rho R_\theta,
\end{align}

\begin{align}\label{2.1-20}
	\text{div}\,\tilde{u}=&\nabla\cdot\left( u(y(x)R_\theta)\right)=\nabla\cdot(u^1\cos\theta-u^2\sin\theta,\,\, u^1\sin\theta+u^2\cos\theta,\,\, u^3)\nonumber\\
	=&\left(\partial_{y_1}u^1\cos\theta-\partial_{y_2}u^1\sin\theta\right)\cos\theta-\left(\partial_{y_1}u^2\cos\theta-\partial_{y_2}u^2\sin\theta\right)\sin\theta\\
	&+\left(\partial_{y_1}u^1\sin\theta+\partial_{y_2}u^1\cos\theta\right)\sin\theta+\left(\partial_{y_1}u^2\sin\theta-\partial_{y_2}u^2\cos\theta\right)\cos\theta+\partial_{y_3}u^3\nonumber\\
	=&\text{div}\,{u}=0,\nonumber
\end{align}
and $\nabla\tilde{p}=\nabla pR_\theta$, $\text{div}\,{b}=0$. Similarly, we have
\begin{align}\label{2.1-4}
\nabla\tilde{u}=&\nabla\left( u(y(x)R_\theta)\right)=\begin{pmatrix}
	\partial_{x_1}u^1& \partial_{x_2}u^1& \partial_{x_3}u^1\\
	\partial_{x_1}u^2& \partial_{x_2}u^2& \partial_{x_3}u^2\\
	\partial_{x_1}u^3& \partial_{x_2}u^3& \partial_{x_3}u^3
\end{pmatrix}^\top R_\theta\\
=&\begin{pmatrix}
\partial_{y_1}u^1\cos\theta-\partial_{y_2}u^1\sin\theta& \partial_{y_1}u^1\sin\theta+\partial_{y_2}u^1\cos\theta& \partial_{y_3}u^1\\
\partial_{y_1}u^2\cos\theta-\partial_{y_2}u^2\sin\theta& \partial_{y_1}u^2\sin\theta+\partial_{y_2}u^2\cos\theta& \partial_{y_3}u^2\\
\partial_{y_1}u^3\cos\theta-\partial_{y_2}u^3\sin\theta& \partial_{y_1}u^3\sin\theta+\partial_{y_2}u^3\cos\theta& \partial_{y_3}u^3
\end{pmatrix}^\top R_\theta=R^\top_\theta\nabla uR_\theta,\nonumber
\end{align}
\begin{align}\label{2.1-5}
\triangle\tilde{u}=&\nabla\cdot\nabla\left(u(y(x)R_\theta)\right)=\begin{pmatrix}
	(\partial^2_{y_1}+\partial^2_{y_2}+\partial^2_{y_3})u^1\cos\theta+(\partial^2_{y_1}+\partial^2_{y_2}+\partial^2_{y_3})u^1\sin\theta\\
	-(\partial^2_{y_1}+\partial^2_{y_2}+\partial^2_{y_3})u^1\sin\theta+(\partial^2_{y_1}+\partial^2_{y_2}+\partial^2_{y_3})u^1\cos\theta\\
	(\partial^2_{y_1}+\partial^2_{y_2}+\partial^2_{y_3})u^3
\end{pmatrix}^\top\nonumber\\
=&\triangle u(y(x))\begin{pmatrix}
	\cos\theta&\sin\theta&0\\
	-\sin\theta&\cos\theta&0\\
	0&0&1
\end{pmatrix}=\triangle u(y(x))R_\theta,
\end{align}
and therefore
\begin{align}\label{2.1-6}
\nabla\tilde{b}=R^\top_\theta\nabla bR_\theta,\quad\triangle\tilde{b}=\triangle b(y(x))R_\theta.
\end{align}

Inserting \eqref{2.1-2}-\eqref{2.1-6} into the system \eqref{1-1}, it follows that
\begin{align}\label{2.1-7}
\left\{
\begin{aligned}
&\rho_t+ uR_\theta\cdot\nabla\rho R_\theta=0,\\
&\rho u_t R_\theta+\rho uR_\theta\cdot R^\top_\theta\nabla uR_\theta-\triangle uR_\theta+\nabla pR_\theta=bR_\theta\cdot R^\top_\theta\nabla bR_\theta,\\
&b_tR_\theta-\triangle bR_\theta+uR_\theta\cdot R^\top_\theta\nabla bR_\theta-bR_\theta\cdot R^\top_\theta\nabla uR_\theta=0,\\
&\text{div}\,{u}=\text{div}\,{b}=0.
\end{aligned}
\right.
\end{align}
Due to
\begin{align}\label{2.1-8}
 uR_\theta\cdot \nabla\rho R_\theta=\begin{pmatrix}
u_1\cos\theta-u_2\sin\theta\\
u_1\sin\theta+u_2\cos\theta\\
u_3
\end{pmatrix}^\top\cdot\begin{pmatrix}
\partial_1\rho\cos\theta-\partial_2\rho\sin\theta\\
\partial_1\rho\sin\theta+\partial_2\rho\cos\theta\\
\partial_3\rho
\end{pmatrix}^\top=u\cdot\nabla\rho,
\end{align}
and
\begin{align}\label{2.1-9}
u R_\theta\cdot R_\theta^\top\nabla=u\cdot\nabla,\quad b R_\theta\cdot R^\top_\theta\nabla=b\cdot\nabla,
\end{align}
through multiplying the matrix $R^\top_\theta$ on the right sides of both $\eqref{2.1-7}_2$ and $\eqref{2.1-7}_3$, we can update
the system \eqref{2.1-7} as \eqref{1-1}.

As a consequence, according to the axisymmetry of initial data $(\rho_0, u_0, b_0)$ and the uniqueness of smooth solutions, we can deduce that $\tilde{\rho}=\rho$, $\tilde{u}=u$, $\tilde{b}=b$, which means that $(\rho, u, b)$ is axisymmetric.
\end{proof}

\vskip .1in
Next, for the axisymmetric fluid flowing in the exterior of a cylinder, we are able to establish the following critical Sobolev inequality of logarithmic type involving density, that plays the first  important role in the a priori estimates.
\vskip .1in

\begin{lemma}\label{lem.B3}
Let $\Omega$ be the exterior of a cylinder and $0\leq\rho\leq\bar{\rho}$. Suppose that $u\in H_0^1(\Omega)$ is an axisymmetric vector field, then there exists a genuine constant $C$, such that
\begin{align}\label{2.3-1}
\|\sqrt{\rho}u\|^2_{L^4(\Omega)}
\leq C(\bar{\rho})\left(1+\|\sqrt{\rho}u\|_{L^2(\Omega)}\right)\|u\|_{H^1(\Omega)}\sqrt{\ln\left(2+\|u\|^2_{H^1(\Omega)}\right)}.
\end{align}
\begin{proof} First of all, because $u$ is axisymmetic, we have $u=u^re_r+u^{\theta}e_{\theta}+u^ze_z$. Moreover, considering that $e_r$, $e_{\theta}$ and $e_z$ are orthogonal, to prove \eqref{2.3-1}, it suffices to certify that it holds true for one component, without loss of generality, we choose $\sqrt{\rho}u^\theta e_\theta$ here. According to the definition,
	\begin{align}\label{2.3-20}
		&I=\|\sqrt{\rho}u^\theta e_\theta\|^4_{L^4(\Omega)}=2\pi\int_{-\infty}^{+\infty}\int_1^{+\infty}(r^{\frac{1}{4}}\sqrt{\rho}u^\theta)^4\,drdz,
	\end{align}
from which we can discover that the integrand $(r^{\frac{1}{4}}\sqrt{\rho}u^\theta)^4$ is a function of two variables $r$ and $z$ in the exterior of a unit circle, i.e. $B^{C}$. Therefore, by setting $\tilde{\nabla}=(\p_r,\p_z)$ and applying Lemma \ref{lem.B30}, it follows that
\begin{align}\label{2.3-2}
I\leq&C(\bar{\rho})\left[1+\left(\int_{-\infty}^{+\infty}\int_1^{+\infty}(r^{\frac{1}{4}}\sqrt{\rho}u^\theta)^2\,drdz\right)^{\frac{1}{2}}\right]^2\times\int_{-\infty}^{+\infty}\int_1^{+\infty}\Big((r^{\frac{1}{4}}u^\theta)^2\nonumber\\
&+|\tilde{\nabla}(r^{\frac{1}{4}}u^\theta)|^2\Big)\,drdz\times
\ln\left[2+\int_{-\infty}^{+\infty}\int_1^{+\infty}\left((r^{\frac{1}{4}}u^\theta)^2+|\tilde{\nabla}(r^{\frac{1}{4}}u^\theta)|^2\right)\,drdz\right]\nonumber\\
\leq&C(\bar{\rho})\left[1+\left(\int_{-\infty}^{+\infty}\int_1^{+\infty}(r^{\frac{1}{4}}\sqrt{\rho}u^\theta)^2drdz\right)^{\frac{1}{2}}\right]^2\times\int_{-\infty}^{+\infty}\int_1^{+\infty}\Big((r^{\frac{1}{4}}u^\theta)^2+|\tilde{\nabla}u^\theta|^2r^{\frac{1}{2}}\nonumber\\
&+\Big|\frac{u^\theta}{r}\Big|^2r^{\frac{1}{2}}\Big)\,drdz\times
\ln\left[2+\int_{-\infty}^{+\infty}\int_1^{+\infty}\left((r^{\frac{1}{4}}u^\theta)^2+|\tilde{\nabla}u^\theta|^2r^{\frac{1}{2}}+\Big|\frac{u^\theta}{r}\Big|^2r^{\frac{1}{2}}\right)\,drdz\right]\nonumber\\
\leq&C(\bar{\rho})\left(1+\|\sqrt{\rho}u^\theta\|_{L^2(\Omega)}\right)^2\left(\|u^\theta\|^2_{L^2(\Omega)}+\|\nabla (u^\theta e^\theta)\|^2_{L^2(\Omega)}\right)\\
&\times\ln\left(2+\|u^\theta\|^2_{L^2(\Omega)}+\|\nabla (u^\theta e^\theta)\|^2_{L^2(\Omega)}\right)\nonumber\\
\leq&C(\bar{\rho})\left(1+\|\sqrt{\rho}u^\theta e^\theta\|_{L^2(\Omega)}\right)^2\|u^\theta e^\theta\|^2_{H^1(\Omega)}\ln\left(2+\|u^\theta e^\theta\|^2_{H^1(\Omega)}\right),\nonumber
\end{align}
where we have used the fact $r^{-\eta}\leq1$ for any $\eta>0$ in the third inequality. Similarly, \eqref{2.3-2} also holds for $u^re_r$ and $u^ze_z$. Finally, through summing them up, we can finish all the proof.
\end{proof}
\end{lemma}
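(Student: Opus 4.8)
The plan is to exploit the fact that an axisymmetric field decouples into three pointwise-orthogonal components, each of which, after passing to cylindrical variables, becomes a genuine two-dimensional function on the exterior of the unit disc, so that the planar logarithmic inequality of Lemma \ref{lem.B30} can be invoked componentwise. First I would write $u=u^re_r+u^\theta e_\theta+u^ze_z$; since $e_r,e_\theta,e_z$ are orthonormal at every point, $|\sqrt{\rho}u|^4$ is controlled by the sum of the fourth powers of the three components, so it suffices to prove \eqref{2.3-1} for a single component, and I would run the argument for $\sqrt{\rho}u^\theta e_\theta$, the cases of $u^re_r$ and $u^ze_z$ being identical.

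Next I would pass to cylindrical coordinates. Writing $dx=r\,dr\,d\theta\,dz$ and using that $u^\theta$ depends only on $(r,z)$, I would record
\[
\|\sqrt{\rho}u^\theta e_\theta\|_{L^4(\Omega)}^4=2\pi\int_{-\infty}^{+\infty}\!\!\int_1^{+\infty}\bigl(r^{1/4}\sqrt{\rho}\,u^\theta\bigr)^4\,dr\,dz,
\]
the key observation being that the Jacobian $r=(r^{1/4})^4$ is absorbed into the integrand. The right-hand side is exactly the weighted $L^4(B^{C})$ norm of the planar function $w:=r^{1/4}u^\theta$; since $u\in H_0^1(\Omega)$ forces $w$ to vanish at $r=1=\partial B^{C}$, we have $w\in H_0^1(B^{C})$, and Lemma \ref{lem.B30} applies verbatim to give, after squaring,
\[
\|\sqrt{\rho}u^\theta e_\theta\|_{L^4(\Omega)}^4\leq C(\bar{\rho})\bigl(1+\|\sqrt{\rho}\,w\|_{L^2(B^{C})}\bigr)^2\|w\|_{H^1(B^{C})}^2\ln\!\bigl(2+\|w\|_{H^1(B^{C})}^2\bigr).
\]

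The heart of the proof, and the step I expect to require the most care, is converting the planar norms of $w=r^{1/4}u^\theta$ back into genuine three-dimensional norms of $u^\theta e_\theta$ over $\Omega$. Two features of the geometry must be used in tandem. On the one hand, with $\tilde{\nabla}=(\partial_r,\partial_z)$ and $\partial_r(r^{1/4}u^\theta)=\tfrac14 r^{-3/4}u^\theta+r^{1/4}\partial_r u^\theta$, expanding $|\tilde{\nabla}w|^2$ produces a curvature-type term $r^{-3/2}(u^\theta)^2=r^{1/2}|u^\theta/r|^2$ alongside $r^{1/2}|\tilde{\nabla}u^\theta|^2$; this is exactly what is wanted, because in three dimensions $|\nabla(u^\theta e_\theta)|^2=|\tilde{\nabla}u^\theta|^2+|u^\theta/r|^2$, the extra $|u^\theta/r|^2$ arising from $\partial_\theta e_\theta=-e_r$. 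On the other hand, the weights that appear are $r^{1/2}$ rather than the full Jacobian $r$, and the decisive point is that $r\geq 1$ throughout $\Omega$, so $r^{1/2}=r\cdot r^{-1/2}\leq r$; this inequality $r^{-\eta}\leq 1$ lets me dominate every planar quantity $\int\!\!\int(\cdots)r^{1/2}\,dr\,dz$ by the corresponding three-dimensional integral $\int\!\!\int(\cdots)r\,dr\,dz=\tfrac{1}{2\pi}\|\cdot\|_{L^2(\Omega)}^2$. Applying this to the density factor, the undifferentiated factor, and the Dirichlet factor in turn upgrades the bound to
\[
\|\sqrt{\rho}u^\theta e_\theta\|_{L^4(\Omega)}^4\leq C(\bar{\rho})\bigl(1+\|\sqrt{\rho}u^\theta e_\theta\|_{L^2(\Omega)}\bigr)^2\|u^\theta e_\theta\|_{H^1(\Omega)}^2\ln\!\bigl(2+\|u^\theta e_\theta\|_{H^1(\Omega)}^2\bigr);
\]
taking square roots yields \eqref{2.3-1} for this component, and I would finish by summing the three componentwise estimates, using orthogonality on the left and $\|u^re_r\|_{H^1}^2+\|u^\theta e_\theta\|_{H^1}^2+\|u^ze_z\|_{H^1}^2\lesssim\|u\|_{H^1(\Omega)}^2$ together with monotonicity of $x\mapsto x\ln(2+x)$ on the right. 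The genuine obstacle worth emphasising is that this scheme collapses if the axis is included: near $r=0$ the weight $r^{-3/2}$ in the curvature term is no longer controlled by $r$, so the restriction $r\geq 1$ supplied by the exterior-of-a-cylinder geometry is indispensable.
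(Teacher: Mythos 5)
Your proposal is correct and follows essentially the same route as the paper: the componentwise reduction, the absorption of the Jacobian via $w=r^{1/4}u^\theta$ so that Lemma \ref{lem.B30} applies on $B^{C}$, the identification $|\nabla(u^\theta e_\theta)|^2=|\tilde{\nabla}u^\theta|^2+|u^\theta/r|^2$, and the use of $r^{-\eta}\leq 1$ (valid only because $r\geq 1$) to dominate the weighted planar integrals by the three-dimensional norms. Your write-up is in fact slightly more explicit than the paper's on two points it leaves implicit, namely that $w$ inherits the zero trace at $r=1$ so $w\in H_0^1(B^{C})$, and how the three componentwise bounds are recombined at the end.
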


Subsequently, unlike ordinary 3D fluids, we can establish the following Gagliardo-Nirenberg inequalities specially for axisymmetric fluids flowing in the exterior of a cylinder and the key indices is as same as 2D flows, that plays the second important role in the a priori estimates.

\begin{lemma} \label{newgn}
Assume that $\Omega$ is the exterior of a cylinder, $ 1 \leq p, q, r \leq \infty $, $ \alpha > 0 $ and $j< m $ be nonnegative integers such that
\begin{align}\label{re1}
\frac{1}{p}=\frac{j}{2}+\alpha\left(\frac{1}{r}-\frac{m}{2}\right)+\left(1-\alpha\right)\frac{1}{q}, \qquad \frac{j}{m} \leq \alpha \leq 1,	
\end{align}
Then for any axisymmetric vector field $u\in L^q(\Omega)$ with $m^{\rm th}$ derivative in
$ L^r(\Omega) $ also has $j^{\rm th}$ derivative in
$ L^p (\Omega) $. Furthermore, it holds that
	\begin{align}\label{new2-2}
		\|\nabla^ju\|_{L^p(\Omega)}\leq \tilde{C}\|\nabla^mu\|_{L^r(\Omega)}^\alpha\|u\|_{L^q(\Omega)}^{1-\alpha},
	\end{align}
	where the constant $\tilde{C}$ depends only on $\Omega$, $m$, $j$, $q$, $r$, $\alpha$.
	\begin{proof} To avoid repetition, we only present the proof of $j=0$ and the cases for $j\geq1$ is similar. Considering that $u$ is axisymmetic, we can rewrite $u$ as  $u=u^re_r+u^{\theta}e_{\theta}+u^ze_z$ firstly. In addition, noticing that $e_r$, $e_{\theta}$ and $e_z$ are orthogonal, to prove \eqref{new2-2} with $j=0$, it suffices to certify it holds for one component. Without loss of generality, we take $u^\theta e_\theta$ for example and it is clear that
		\begin{align}\label{new2.3-20}
			&II=\|u^\theta e_\theta\|^p_{L^p(\Omega)}=2\pi\int_{-\infty}^{+\infty}\int_1^{+\infty}(r^{\frac{1}{p}}u^\theta)^p\,drdz.
		\end{align}
		According to the definition of axisymmetric flow, the integrand  $(r^{\frac{1}{p}}u^\theta)^p$ is a function of two variables $r$ and $z$ only and therefore we can regard it as integrating in the exterior of a unit circle, namely $B^{C}$. Then by applying Lemma \ref{lem.B2} for the 2D exterior domain, it holds that
		\begin{align}\label{newgn1}
			&II\\
			\leq&C\left(\int_{-\infty}^{+\infty}\int_1^{+\infty}|\tilde{\nabla}^m(r^{\frac{1}{p}}u^\theta)|^r\,drdz\right)^{\f{p\alpha}{r}}\left(\int_{-\infty}^{+\infty}\int_1^{+\infty}|(r^{\frac{1}{p}}u^\theta)|^q\,drdz\right)^{\f{p(1-\alpha)}{q}}.\notag
		\end{align}
	Recalling $\tilde{\nabla}=(\p_r,\p_z)$, there are three main parts in $|\tilde{\nabla}^m(r^{\frac{1}{p}}u^\theta)|^r$. The first are $|r^{\frac{1}{p}}\p_r^mu^\theta|^r$	and $|r^{\frac{1}{p}}\p_z^mu^\theta|^r$, which can be bounded by $|\tilde{\nabla}^mu^\theta|^rr^{\frac{r}{p}}$ apparently. The second is $|(\f1p-m)r^{\frac{1}{p}-m-1}u^\theta|^r$. Due to $\nabla={ e}_r\p_r+\f1r{ e}_\theta\p_\theta+{ e}_z\p_z$ and $\p_r{ e}_\theta=0$, $\p_\theta{ e}_\theta=-{ e}_r$,  $\p_z{ e}_\theta=0$, $|(\f1p-m)r^{\frac{1}{p}-m-1}u^\theta|^r$ can be bounded by $C|r^{\frac{1}{p}}\nabla^m(u^\theta{ e}_\theta)|^r$.	The third are the mixing terms of first and second parts, that can be bounded by $C|r^{\frac{1}{p}}\nabla^m(u^\theta{ e}_\theta)|^r$ also. From here, we can update \eqref{newgn1} as	
		\begin{align}\label{newgn2}
		&II\\
		\leq& C\left(\int_{-\infty}^{+\infty}\int_1^{+\infty}|{\nabla}^m(u^\theta{ e}_\theta)|^rr^{\frac{r}{p}-1}r\,drdz\right)^{\f{p\alpha}{r}}\left(\int_{-\infty}^{+\infty}\int_1^{+\infty}|(u^\theta e_\theta)|^qr^{\frac{q}{p}-1}r\,drdz\right)^{\f{p(1-\alpha)}{q}}.\notag
	\end{align}	
Based on the relation \eqref{re0}	for $j=0$ and $n=2$, we can deduce that $r<p$ and $q<p$, which together with $r>1$ further implies $r^{\frac{r}{p}-1}<1$ and $r^{\frac{q}{p}-1}<1$. Thanks to this, we are able to obtain \eqref{new2-2} for $j=0$ and $u=u^\theta e_\theta$. Similarly, \eqref{new2-2} also holds for $j=0$ and $u^re_r$ and $u^ze_z$. In the end, by adding them up, we can finish all the proof.
\end{proof}	
\end{lemma}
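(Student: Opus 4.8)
The plan is to exploit axisymmetry to collapse the three–dimensional estimate over $\Omega$ into a two–dimensional one on the exterior of a unit disc, where the sharp inequality \eqref{2-2} of Lemma \ref{lem.B2} carries \emph{no} lower–order remainder. First I would write $u=u^re_r+u^{\theta}e_{\theta}+u^ze_z$; since $\{e_r,e_{\theta},e_z\}$ is orthonormal, both sides of \eqref{new2-2} split, up to dimensional constants, into the contributions of the three scalar components, so it suffices to treat a single one, say $u^{\theta}e_{\theta}$, and to begin with $j=0$. The decisive observation is that $u^{\theta}=u^{\theta}(r,z)$ is independent of $\theta$, whence
\begin{align*}
\|u^{\theta}e_{\theta}\|_{L^p(\Omega)}^p=2\pi\int_{-\infty}^{+\infty}\int_1^{+\infty}(r^{1/p}u^{\theta})^p\,dr\,dz,
\end{align*}
so that $g:=r^{1/p}u^{\theta}$ may be regarded as a function of the two variables $(r,z)$ on the planar exterior domain $B^{C}$.

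Next I would apply Lemma \ref{lem.B2} in dimension $n=2$ with $j=0$ to $g$ on $B^{C}$; because $B^{C}$ is an exterior domain the constant $\tilde C_2$ vanishes, which is exactly what produces the clean form \eqref{new2-2} in contrast to the remainder $\tilde C_2\|f\|_{L^s}$ of \eqref{2-2} and \eqref{key10}. This yields a planar bound with the two–dimensional scaling exponents dictated by \eqref{re0}, and it is precisely this passage to $n=2$ that forces the index $\alpha$ in \eqref{re1} to coincide with the genuinely two–dimensional one. It then remains to convert the weighted planar norms of $g$ back into norms of $u$ over $\Omega$. Expanding $\tilde\nabla^m(r^{1/p}u^{\theta})$ by the Leibniz rule, the terms in which all derivatives fall on $u^{\theta}$ are dominated by $r^{1/p}|\tilde\nabla^mu^{\theta}|$, whereas the terms differentiating the weight produce factors such as $r^{1/p-m}u^{\theta}$; using $\nabla=e_r\partial_r+\frac1r e_\theta\partial_\theta+e_z\partial_z$ together with $\partial_\theta e_\theta=-e_r$, these lower–order contributions are themselves of the form $r^{1/p}|\nabla^m(u^{\theta}e_{\theta})|$, so the whole expression is controlled by $r^{1/p}|\nabla^m(u^{\theta}e_{\theta})|$.

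Finally I would reinstate the polar Jacobian $r\,dr\,dz$. Rewriting the planar integrals with this measure leaves the discrepancy weights $r^{\frac{r}{p}-1}$ and $r^{\frac{q}{p}-1}$ (the exponents here being the Lebesgue indices). The key arithmetic step is to read off from the scaling relation \eqref{re0}, with $j=0$ and $n=2$, that $r\le p$ and $q\le p$; since the radial variable satisfies $r\ge1$ throughout $\Omega$, this gives $r^{\frac{r}{p}-1}\le1$ and $r^{\frac{q}{p}-1}\le1$, so both weights may simply be discarded, turning the planar norms into $\|\nabla^m(u^{\theta}e_{\theta})\|_{L^r(\Omega)}$ and $\|u^{\theta}e_{\theta}\|_{L^q(\Omega)}$. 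Summing the three components completes the case $j=0$, and $j\ge1$ follows along identical lines after expanding $\nabla^j$ of a vector field in the cylindrical frame.

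I expect the genuinely delicate points to be the two matchings in the last two paragraphs. The first is verifying that \emph{every} lower–order term created by differentiating the weight $r^{1/p}$ is absorbed by the \emph{vector} quantity $|\nabla^m(u^{\theta}e_{\theta})|$ rather than merely the scalar $|\tilde\nabla^m u^{\theta}|$; this is where the curvature identities such as $\partial_\theta e_\theta=-e_r$ are indispensable, and it is what genuinely distinguishes the axisymmetric situation. The second, and most crucial, is confirming that the sign conditions $r\le p$ and $q\le p$ really do follow from \eqref{re1}, since it is exactly the failure of an analogous weight bound in the full three–dimensional inequality \eqref{key10} that prevents the remainder $\tilde C_2$ from being dropped there; everything else reduces to bookkeeping with the scaling identity.
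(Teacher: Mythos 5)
Your proposal is correct and follows the paper's own proof essentially step for step: the same decomposition $u=u^re_r+u^\theta e_\theta+u^ze_z$ with reduction to one component, the same identification of $r^{1/p}u^\theta$ as a planar function on $B^C$ so that Lemma \ref{lem.B2} applies with vanishing remainder $\tilde{C_2}$, the same absorption of the terms coming from differentiating the weight $r^{1/p}$ into $|\nabla^m(u^\theta e_\theta)|$ via $\partial_\theta e_\theta=-e_r$, and the same final discarding of the Jacobian weights $r^{\frac{r}{p}-1}$, $r^{\frac{q}{p}-1}$ using $r\geq1$ together with the index relations deduced from \eqref{re0} with $n=2$, $j=0$. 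There is no substantive difference between your argument and the paper's.
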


Because the case $p=4$ will be used frequently in current paper, we list it  separately for convenience,
see also \cite{JTLiu2024}.

\begin{lemma}\label{lem.B4} Let $\Omega$ be the exterior of a cylinder and suppose that $u\in H^1(\Omega)$ is an axisymmetric vector field, then there exists the genuine constant ${\hat{C}}$ such that
	\begin{equation}\label{2.4-1}
		\|u\|_{L^{4}(\Omega)}\leq {\hat{C}}\|u\|^\frac{1}{2}_{L^{2}(\Omega)}\|\nabla u\|^\frac{1}{2}_{L^{2}(\Omega)},
	\end{equation}
	and
	\begin{equation}\label{new2.4-1}
		\|\nabla u\|_{L^{4}(\Omega)}\leq {\hat{C}}\|\nabla u\|^\frac{1}{2}_{L^{2}(\Omega)}\|\nabla^2 u\|^\frac{1}{2}_{L^{2}(\Omega)}.
	\end{equation}
\end{lemma}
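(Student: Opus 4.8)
The plan is to recognize that both estimates \eqref{2.4-1} and \eqref{new2.4-1} are nothing more than specific instances of the axisymmetric Gagliardo--Nirenberg inequality already established in Lemma \ref{newgn}, so that no new analysis beyond a bookkeeping of indices (together with one structural observation) is required. For \eqref{2.4-1} I would simply invoke Lemma \ref{newgn} with the choice
\begin{align*}
j=0,\quad m=1,\quad p=4,\quad r=q=2,\quad \alpha=\tfrac12.
\end{align*}
One first checks that these values satisfy the admissibility relation \eqref{re1}: indeed $\tfrac1p=\tfrac14$, while $\tfrac{j}{2}+\alpha\left(\tfrac1r-\tfrac{m}{2}\right)+(1-\alpha)\tfrac1q=0+\tfrac12\left(\tfrac12-\tfrac12\right)+\tfrac12\cdot\tfrac12=\tfrac14$, and the constraint $\tfrac{j}{m}=0\leq\alpha=\tfrac12\leq1$ holds. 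Then \eqref{new2-2} reads precisely $\|u\|_{L^4(\Omega)}\leq\hat{C}\|\nabla u\|_{L^2(\Omega)}^{1/2}\|u\|_{L^2(\Omega)}^{1/2}$, which is \eqref{2.4-1}.

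For \eqref{new2.4-1}, the idea is to apply the same inequality not to $u$ but to its gradient $\nabla u$, so that the interpolation now runs between $\|\nabla u\|_{L^2}$ and $\|\nabla^2u\|_{L^2}$ rather than between $\|u\|_{L^2}$ and $\|\nabla u\|_{L^2}$. I emphasize that one cannot obtain \eqref{new2.4-1} by a direct $j=1$ application of Lemma \ref{newgn}: with $j=1$, $m=2$, $p=4$, $r=2$, $\alpha=\tfrac12$ the relation \eqref{re1} forces $q=\infty$, producing the interpolation $\|\nabla u\|_{L^4}\leq\tilde C\|\nabla^2u\|_{L^2}^{1/2}\|u\|_{L^\infty}^{1/2}$, which is not the claimed bound. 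The correct route is therefore to apply the $j=0$ version, already proved in \eqref{2.4-1}, to the field $\nabla u$ in place of $u$, yielding
\begin{align*}
\|\nabla u\|_{L^4(\Omega)}\leq\hat{C}\|\nabla(\nabla u)\|_{L^2(\Omega)}^{1/2}\|\nabla u\|_{L^2(\Omega)}^{1/2}=\hat{C}\|\nabla^2u\|_{L^2(\Omega)}^{1/2}\|\nabla u\|_{L^2(\Omega)}^{1/2}.
\end{align*}

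The main, and really the only, obstacle is the structural observation required to legitimize this last step: Lemma \ref{newgn} is stated for an axisymmetric \emph{vector} field, whereas $\nabla u$ is a rank-two tensor, so one must confirm that passing from $u$ to $\nabla u$ preserves the axisymmetric structure underlying the $3D$-to-$2D$ reduction. Concretely, the proof of Lemma \ref{newgn} proceeds componentwise, using that for $u=u^re_r+u^\theta e_\theta+u^ze_z$ each scalar coefficient $u^r,u^\theta,u^z$ depends on $(r,z)$ alone, which lets the integral over $\Omega$ collapse to a weighted planar integral over the exterior disk $B^{C}$. Invoking the cylindrical-frame identities $\nabla=e_r\p_r+\tfrac1r e_\theta\p_\theta+e_z\p_z$ together with $\p_\theta e_r=e_\theta$, $\p_\theta e_\theta=-e_r$, $\p_\theta e_z=0$ (exactly the relations already used in the proof of Lemma \ref{newgn}), one verifies that every physical component of $\nabla u$ in the cylindrical frame is again a function of $(r,z)$ only. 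Granting this reduction, the componentwise argument applies verbatim and both inequalities follow, with $\hat{C}$ inheriting its dependence from the constant $\tilde C$ of Lemma \ref{newgn}.
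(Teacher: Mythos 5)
Your proposal is correct and takes essentially the same route as the paper: the paper gives no actual proof of Lemma \ref{lem.B4}, simply listing it as the $p=4$ special case of Lemma \ref{newgn} (with a pointer to \cite{JTLiu2024}), which is precisely your derivation of \eqref{2.4-1}. Your handling of \eqref{new2.4-1} — observing that a literal $j=1$ application of \eqref{new2-2} would force $q=\infty$, and instead applying the $j=0$ case to the field $\nabla u$ after checking that its cylindrical-frame components depend on $(r,z)$ only — fills in a justification the paper leaves entirely implicit, at the same level of rigor as the paper's own componentwise argument in Lemma \ref{newgn}.
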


\section{Global well-posedness of strong solutions}
\subsection{A priori estimates}
In this subsection, we will establish the sufficient a priori estimates of local axisymmetric strong solutions $(\rho, u, b)$ to the system \eqref{1-1}-\eqref{1-2}. Specifically, for given initial data $(\rho_0, u_0, b_0)$ satisfying \eqref{1-7}, we intend to work on them step by step, that will be given
in Lemmas \ref{lem.B3.01}-\ref{lem.B3.4}.\\

First, making use of the transport equation $\eqref{1-1}_1$ and $\mathrm{div}\,u=0$, we can directly obtain the following basic estimates.
\begin{lemma}\label{lem.B3.01}
For $(x, t)\in\Omega\times[0, T]$, it holds that
\begin{equation}\label{3.1-1}
\sup\limits_{t\in[0, T]}\|\rho\|_{L^\infty}=\|\rho_0\|_{L^\infty}\triangleq\bar{\rho},\quad
\sup\limits_{t\in[0, T]}\|\rho-\bar{\rho}\|_{L^\frac{3}{2}}\leq\|\rho_0-\bar{\rho}\|_{L^\frac{3}{2}}.
\end{equation}
\end{lemma}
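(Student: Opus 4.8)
The plan is to exploit the fact that, thanks to incompressibility $\Div u=0$ (from $\eqref{1-1}_4$), the continuity equation $\eqref{1-1}_1$ is purely of transport type. First I would rewrite it as
\begin{align*}
\rho_t+u\cdot\nabla\rho=0,
\end{align*}
which shows that $\rho$ is merely advected along the flow. Introducing the Lagrangian flow map $X(t,x)$ solving $\partial_t X(t,x)=u(t,X(t,x))$ with $X(0,x)=x$, one has $\rho(t,X(t,x))=\rho_0(x)$. Because $u$ vanishes on $\partial\Omega$ by \eqref{1-2boundary}, no particle crosses the boundary, so $X(t,\cdot)$ maps $\Omega$ onto itself; because $\Div u=0$, its Jacobian is identically one, so $X(t,\cdot)$ is a measure-preserving diffeomorphism of $\Omega$.

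From this Lagrangian description both assertions follow at once. For the $L^\infty$ bound, the range of $\rho(t,\cdot)$ coincides with the range of $\rho_0$, hence the pointwise constraint $0\leq\rho\leq\bar{\rho}$ from \eqref{1-7} is preserved for all time and $\|\rho(t,\cdot)\|_{L^\infty(\Omega)}=\|\rho_0\|_{L^\infty(\Omega)}=\bar{\rho}$, which is the first equality in \eqref{3.1-1}. For the $L^{3/2}$ bound I would note that, since $\bar{\rho}$ is constant and $\Div u=0$, the shifted density $\rho-\bar{\rho}$ obeys the same transport equation $(\rho-\bar{\rho})_t+u\cdot\nabla(\rho-\bar{\rho})=0$; composing with the measure-preserving map $X(t,\cdot)$ and changing variables with unit Jacobian yields $\|\rho(t)-\bar{\rho}\|_{L^p}=\|\rho_0-\bar{\rho}\|_{L^p}$ for every $p\in[1,\infty)$, and taking $p=\f32$ gives the second estimate (indeed with equality, which is stronger than the claimed inequality).

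Equivalently, working at the level of the smooth approximate solutions used to construct the strong solution, I would multiply $(\rho-\bar{\rho})_t+u\cdot\nabla(\rho-\bar{\rho})=0$ by $|\rho-\bar{\rho}|^{-1/2}(\rho-\bar{\rho})$ and integrate over $\Omega$; up to a positive constant the convective term becomes $\int u\cdot\nabla|\rho-\bar{\rho}|^{3/2}\dd x=-\int(\Div u)|\rho-\bar{\rho}|^{3/2}\dd x=0$ after one integration by parts, the boundary contribution vanishing because $u=0$ on $\partial\Omega$. The only point genuinely requiring care is the justification of this integration by parts on the unbounded exterior domain, namely the absence of any contribution from spatial infinity; this is guaranteed by the integrability $\rho_0-\bar{\rho}\in L^{3/2}\cap\dot{H}^1(\Omega)$ propagated along the flow together with $u\in H^1_{0,\sigma}(\Omega)$. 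Once this decay bookkeeping is secured the argument is entirely routine, so I expect no serious obstacle beyond controlling the boundary and far-field terms.
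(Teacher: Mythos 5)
Your proposal is correct and follows essentially the same route as the paper, which states these estimates as following directly from the transport form of $\eqref{1-1}_1$ under $\mathrm{div}\,u=0$ without writing out details. Your Lagrangian flow-map argument (equivalently, the renormalized Eulerian computation with the multiplier $|\rho-\bar{\rho}|^{-1/2}(\rho-\bar{\rho})$) is exactly the standard justification behind that claim, and it even yields equality of the $L^{3/2}$ norms, which is slightly stronger than the stated inequality.
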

\begin{lemma}\label{lem.B3.1}
There exists a genuine constant C depending only on  $\bar{\rho}$,  $\|\rho_0-\bar{\rho}\|_{L^{\frac{3}{2}}}$, $\|u_0\|_{L^2}$ and $\|b_0\|_{L^2}$ such that
\begin{equation}\label{3.1-2}
\sup\limits_{t\in[0, T]}\left(\|\sqrt{\rho}u\|_{L^2}^2+\|b\|_{L^2}^2\right)+\int_0^T\left(\|\nabla u\|_{L^2}^2+\|\nabla b\|_{L^2}^2\right)\,dt\leq C.
\end{equation}
\end{lemma}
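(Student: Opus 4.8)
\emph{Plan.} The estimate \eqref{3.1-2} is the basic energy law for the MHD system, so the plan is to test the momentum equation $\eqref{1-1}_2$ against $u$ and the induction equation $\eqref{1-1}_3$ against $b$, integrate over $\Omega$, add the two identities, and observe that the Lorentz force term and the magnetic stretching term cancel exactly. Throughout I would work on the local strong solution, for which every integration by parts is legitimate and all boundary terms vanish because $u=b=0$ on $\partial\Omega$ by \eqref{1-2boundary}.

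First I would treat the velocity. Using $\eqref{1-1}_1$ one has the identity $(\rho u)_t+\mathrm{div}(\rho u\otimes u)=\rho u_t+\rho u\cdot\nabla u$; dotting with $u$, integrating, and using $\mathrm{div}\,u=0$ together with the boundary condition converts the convective part into $\tfrac12\frac{d}{dt}\int\rho|u|^2$. The viscous term gives $\int|\nabla u|^2$ after one integration by parts, while the pressure term satisfies $\int\nabla p\cdot u=-\int p\,\mathrm{div}\,u=0$. Hence
\[
\tfrac12\frac{d}{dt}\int\rho|u|^2\,dx+\int|\nabla u|^2\,dx=\int(b\cdot\nabla b)\cdot u\,dx.
\]
Testing $\eqref{1-1}_3$ against $b$ is analogous: the time term yields $\tfrac12\frac{d}{dt}\|b\|_{L^2}^2$, the diffusion term yields $\int|\nabla b|^2$, and the transport term drops since $\int(u\cdot\nabla b)\cdot b=\tfrac12\int u\cdot\nabla|b|^2=-\tfrac12\int(\mathrm{div}\,u)|b|^2=0$, leaving
\[
\tfrac12\frac{d}{dt}\int|b|^2\,dx+\int|\nabla b|^2\,dx=\int(b\cdot\nabla u)\cdot b\,dx.
\]

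The crux is the cancellation of the two coupling terms. Adding the right-hand sides and using the product rule,
\[
\int(b\cdot\nabla b)\cdot u\,dx+\int(b\cdot\nabla u)\cdot b\,dx=\int b\cdot\nabla(u\cdot b)\,dx=-\int(\mathrm{div}\,b)(u\cdot b)\,dx=0,
\]
where the last equality uses $\mathrm{div}\,b=0$ and $b=0$ on $\partial\Omega$. Summing the two energy identities therefore gives the exact balance
\[
\tfrac12\frac{d}{dt}\int\big(\rho|u|^2+|b|^2\big)\,dx+\int\big(|\nabla u|^2+|\nabla b|^2\big)\,dx=0.
\]
Integrating in $t$ and bounding the initial kinetic energy via $\|\sqrt{\rho_0}u_0\|_{L^2}^2\le\bar\rho\|u_0\|_{L^2}^2$ (using $0\le\rho_0\le\bar\rho$ from Lemma \ref{lem.B3.01}) yields \eqref{3.1-2} with $C=\tfrac12\big(\bar\rho\|u_0\|_{L^2}^2+\|b_0\|_{L^2}^2\big)$; in fact the bound does not require the $L^{3/2}$ norm of $\rho_0-\bar\rho$ at all.

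I do not expect a genuine obstacle here: the estimate is an exact conservation law and does not even invoke the Gagliardo--Nirenberg machinery of Lemmas \ref{lem.B3}--\ref{lem.B4}. The only points needing care are (i) justifying the integrations by parts on the unbounded domain $\Omega$, which is fine because the strong solution and its gradients are square-integrable and the homogeneous Dirichlet conditions on $u$ and $b$ annihilate every boundary contribution, and (ii) the algebraic cancellation of the Lorentz and stretching terms, which is the single place where the special antisymmetric structure of the MHD nonlinearity is used.
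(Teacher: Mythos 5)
Your proof is correct and is essentially the paper's own argument: both derive the exact energy identity $\frac{1}{2}\frac{d}{dt}\left(\|\sqrt{\rho}u\|_{L^2}^2+\|b\|_{L^2}^2\right)+\|\nabla u\|_{L^2}^2+\|\nabla b\|_{L^2}^2=0$ by testing with $(u,b)$ and using the antisymmetric cancellation of the coupling terms, then integrate in time. Your side remark is also accurate — the $\|\rho_0-\bar{\rho}\|_{L^{3/2}}$ dependence is not needed for \eqref{3.1-2} itself; the paper lists it because its proof additionally records the bound $\bar{\rho}\|u\|_{L^2}^2\leq C\left(1+\|\nabla u\|_{L^2}^2\right)$ via the $L^{3/2}$ norm and the embedding $\|u\|_{L^6}\leq\|\nabla u\|_{L^2}$, which is used in later lemmas.
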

\begin{proof}
Taking inner product of $\eqref{1-1}_2$ and $\eqref{1-1}_3$ with $u$ and $b$ respectively and then integrating by parts, it follows that
\begin{align}\label{3.1-5}
\frac{1}{2}\frac{d}{dt}\left(\|\sqrt{\rho}u\|_{L^2}^2+\|b\|^2_{L^2}\right)+\|\nabla u\|^2_{L^2}+\|\nabla b\|^2_{L^2}=0,
\end{align}
which implies, after integrating in time, that
\begin{align}\label{3.1-6}
\sup\limits_{t\in[0, T]}\|\sqrt{\rho}u,b\|_{L^2}^2+\int_0^T\|\nabla u,\nabla b\|^2_{L^2}\,dt
\leq \left(\|\sqrt{\rho_0}u_0\|^2_{L^2}+\|b_0\|^2_{L^2}\right),
\end{align}
and \eqref{3.1-2}. Noticing \eqref{3.1-1}, \eqref{3.1-6}, $u|_{\partial\Omega}=0$ and by extending to zero for $x\in{\Omega}^{c}$, one can obtain the Sobolev embedding inequality
\begin{equation}\label{poin}
\|u\|_{L^6}\leq\|\nabla u\|_{L^2},
\end{equation}
and then we have
\begin{align}\label{3.1-7}
&\bar{\rho}\int|u|^2\,dx=\int\rho|u|^2\,dx-\int\left(\rho-\bar{\rho}\right)|u|^2\,dx\nonumber\\
&\leq \|\sqrt{\rho_0}u_0\|^2_{L^2}+\|\rho-\bar{\rho}\|_{L^{\frac{3}{2}}}\|u\|^2_{L^6}
\leq C\left(1+\|\nabla u\|^2_{L^2}\right).
\end{align}
Thus, the proof is finished.
\end{proof}

\vskip .1in
Subsequently, we will make full use of the axisymmetric property of solutions in the exterior of a cylinder to establish the first key estimates, i.e. the $L^{\infty}\left([0, +\infty); H^1\right)$ norms of velocity and magnetic fields.
\vskip .1in

\begin{lemma}\label{lem.B3.2}
Suppose that $(\rho, u, b)$ is an axisymmetric solution to the system \eqref{1-1}-\eqref{1-2} and \eqref{1-2boundary}, then there exists a genuine constant $C$ depending only on $\bar{\rho}$, $\|\rho_0-\bar{\rho}\|_{L^{\f32}}$,  $\| u_0\|_{H^1}$ and $\|b_0\|_{H^1}$ such that
\begin{align}\label{3.2-1}
	\sup\limits_{t\in[0, T]}\left(\| u,b\|^2_{H^1}+\| b\|^4_{L^4}\right)
	+\int_0^T\left(\|\sqrt{\rho}u_t\|^2_{L^2}+\|b_t\|^2_{L^2}+\|{\nabla}^2 b\|^2_{L^2}+\|\nabla u\|^2_{H^1}\right)dt\leq C,
\end{align}
and
\begin{align}\label{3.2-01}
	\sup\limits_{t\in[0, T]}t\left(\|\nabla u\|^2_{L^2}+\|\nabla b\|^2_{L^2}+\| b\|^4_{L^4}\right)
	+\int_0^Tt\left(\|\sqrt{\rho}u_t\|^2_{L^2}+\|b_t\|^2_{L^2}+\|{\nabla}^2 b\|^2_{L^2}\right)\,dt\leq C.
\end{align}
\end{lemma}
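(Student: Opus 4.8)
The plan is to run a single energy argument at the $H^1$ level and close it by a logarithmic Gr\"onwall trick that exploits the fact that Lemmas \ref{lem.B3}, \ref{newgn} and \ref{lem.B4} all carry the \emph{two-dimensional} interpolation exponents. First I would test $\eqref{1-1}_2$, rewritten via $\eqref{1-1}_1$ as $\rho u_t+\rho u\cdot\nabla u-\triangle u+\nabla p=b\cdot\nabla b$, by $u_t$; test $\eqref{1-1}_3$ by $b_t$; and test $\eqref{1-1}_3$ by $|b|^2b$. Using $\mathrm{div}\,u=\mathrm{div}\,b=0$ and the boundary conditions \eqref{1-2boundary}, the first two produce $\|\sqrt{\rho}u_t\|_{L^2}^2+\|b_t\|_{L^2}^2+\frac12\frac{d}{dt}(\|\nabla u\|_{L^2}^2+\|\nabla b\|_{L^2}^2)$ on the left, and the third yields $\frac14\frac{d}{dt}\|b\|_{L^4}^4$ plus a nonnegative dissipation. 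Since the vacuum forbids controlling $u_t$ in $L^2$, the Lorentz power $\int(b\cdot\nabla b)\cdot u_t$ must be integrated by parts in space and time to become $-\frac{d}{dt}\int(b\cdot\nabla u)\cdot b+\int(b_t\cdot\nabla u)\cdot b+\int(b\cdot\nabla u)\cdot b_t$, which moves the troublesome term into the energy as the indefinite quantity $\int(b\cdot\nabla u)\cdot b$. Because $|\int(b\cdot\nabla u)\cdot b|\le\|b\|_{L^4}^2\|\nabla u\|_{L^2}\le\frac14\|\nabla u\|_{L^2}^2+\|b\|_{L^4}^4$, adding a large multiple of $\|b\|_{L^4}^4$ makes the functional $\Phi:=\frac12\|\nabla u\|_{L^2}^2+\frac12\|\nabla b\|_{L^2}^2+\int(b\cdot\nabla u)\cdot b+K\|b\|_{L^4}^4$ comparable to $\|\nabla u\|_{L^2}^2+\|\nabla b\|_{L^2}^2+\|b\|_{L^4}^4$.

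The heart of the estimate is the inertial term $\int\rho(u\cdot\nabla u)\cdot u_t\le\|\sqrt{\rho}u_t\|_{L^2}\|\sqrt{\rho}u\|_{L^4}\|\nabla u\|_{L^4}$, together with the $b_t$- and $|b|^2b$-type couplings, which I would all bound through Lemma \ref{lem.B4}, using $\|\nabla u\|_{L^4}^2\le\hat{C}^2\|\nabla u\|_{L^2}\|\nabla^2u\|_{L^2}$ and the analogous bounds for $\|b\|_{L^4}$, $\|\nabla b\|_{L^4}$. The appearance of $\|\nabla^2u\|_{L^2}$ and $\|\nabla^2b\|_{L^2}$ is resolved by Lemma \ref{lem.B5} applied to $-\triangle u+\nabla p=b\cdot\nabla b-\rho u_t-\rho u\cdot\nabla u$ and by elliptic regularity for $-\triangle b=-b_t-u\cdot\nabla b+b\cdot\nabla u$; absorbing the super-linear self-interaction yields $\|\nabla^2u\|_{L^2}\le C(\|\sqrt{\rho}u_t\|_{L^2}+\|b\cdot\nabla b\|_{L^2}+\|\sqrt{\rho}u\|_{L^4}^2\|\nabla u\|_{L^2})$ and $\|\nabla^2b\|_{L^2}\le C(\|b_t\|_{L^2}+\|u\cdot\nabla b\|_{L^2}+\|b\cdot\nabla u\|_{L^2})$. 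Substituting these back and applying Young's inequality absorbs every occurrence of $\|\sqrt{\rho}u_t\|_{L^2}^2$, $\|b_t\|_{L^2}^2$, $\|\nabla^2u\|_{L^2}^2$, $\|\nabla^2b\|_{L^2}^2$, while the crucial coefficient $\|\sqrt{\rho}u\|_{L^4}^4$ is controlled by squaring Lemma \ref{lem.B3}, which, with $\|\sqrt{\rho}u\|_{L^2}\le C$ and $\|u\|_{L^2}^2\le C(1+\|\nabla u\|_{L^2}^2)$ from Lemma \ref{lem.B3.1} and \eqref{3.1-7}, gives $\|\sqrt{\rho}u\|_{L^4}^4\le C(1+\|\nabla u\|_{L^2}^2)\ln(e+\|\nabla u\|_{L^2}^2)$. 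The outcome is a differential inequality of the schematic form
\begin{align*}
\frac{d}{dt}\Phi+c\left(\|\sqrt{\rho}u_t\|_{L^2}^2+\|b_t\|_{L^2}^2+\|\nabla^2u\|_{L^2}^2+\|\nabla^2b\|_{L^2}^2\right)\le C(1+\Phi)\,\Phi\,\ln(e+\Phi).
\end{align*}

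The \textbf{main obstacle} is to close this globally and uniformly in $T$ without any smallness, since a naive Gr\"onwall fails against the super-linear right-hand side. Instead I would set $w:=\ln(e+\Phi)$ and note that dividing by $e+\Phi$ gives $\frac{d}{dt}w\le C\Phi\,w$, where the coefficient $\Phi$ is \emph{time-integrable}: Lemma \ref{lem.B3.1} yields $\int_0^T(\|\nabla u\|_{L^2}^2+\|\nabla b\|_{L^2}^2)\,dt\le C$ uniformly in $T$, and since $\|b\|_{L^4}^4\le\hat{C}^4\|b\|_{L^2}^2\|\nabla b\|_{L^2}^2\le C\|\nabla b\|_{L^2}^2$ by Lemma \ref{lem.B4}, also $\int_0^T\Phi\,dt\le C$. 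A standard Gr\"onwall then bounds $w$, hence $\Phi$, uniformly in $T$ by a constant depending only on $\Phi(0)$ (controlled by $\|u_0\|_{H^1}$, $\|b_0\|_{H^1}$) and the basic energy, giving $\sup_t(\|u,b\|_{H^1}^2+\|b\|_{L^4}^4)\le C$. It is precisely the two-dimensional exponents in Lemmas \ref{lem.B3}, \ref{lem.B4} and \ref{newgn} that force the self-interaction onto the borderline form $\Phi^2\ln(e+\Phi)$, rather than a genuine higher power as the classical $3$D inequality \eqref{2-2} would produce, which is exactly what this log-trick can absorb. Integrating the differential inequality and using the bounds already obtained then furnishes $\int_0^T(\|\sqrt{\rho}u_t\|_{L^2}^2+\|b_t\|_{L^2}^2+\|\nabla^2b\|_{L^2}^2+\|\nabla u\|_{H^1}^2)\,dt\le C$, completing \eqref{3.2-1}.

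Finally, for the time-weighted bound \eqref{3.2-01} I would multiply the same differential inequality by $t$ and use $\frac{d}{dt}(t\Phi)=\Phi+t\frac{d}{dt}\Phi$; since $\Phi$ is now known to be bounded and $\int_0^T\Phi\,dt\le C$, the extra term $\Phi$ is integrable and a further Gr\"onwall (with bounded coefficients) gives $\sup_t t\,\Phi(t)+\int_0^T t(\|\sqrt{\rho}u_t\|_{L^2}^2+\|b_t\|_{L^2}^2+\|\nabla^2b\|_{L^2}^2)\,dt\le C$ uniformly in $T$, which is \eqref{3.2-01}. The $t$-weight starts the bound from zero at $t=0$, so no compatibility condition on the data is required.
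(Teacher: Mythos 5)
Your proposal is correct and follows essentially the same route as the paper: testing the momentum equation with $u_t$ and the magnetic equation with $|b|^2b$, moving the Lorentz term into the energy by the time integration by parts $\int b\cdot\nabla b\cdot u_t=-\frac{d}{dt}\int b\cdot\nabla u\cdot b+\dots$, forming the modified functional containing $\int b\cdot\nabla u\cdot b$ and a large multiple of $\|b\|_{L^4}^4$, resolving $\|\nabla^2u\|_{L^2}$ via the Stokes estimate of Lemma \ref{lem.B5}, invoking Lemma \ref{lem.B3} to reach the borderline $\Phi\ln\Phi$ growth, closing with the logarithmic Gr\"onwall argument using the time-integrability of $\|\nabla u\|_{L^2}^2+\|\nabla b\|_{L^2}^2$, and finally multiplying by $t$ for \eqref{3.2-01}. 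The only (immaterial) deviations are that you test $\eqref{1-1}_3$ with $b_t$ rather than $\triangle b$ and keep $b_t$ explicit instead of substituting the equation, which the paper recovers afterwards anyway via \eqref{3.3-8.0} and elliptic regularity.
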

\begin{proof}
Taking inner product of $\eqref{1-1}_2$ with $u_t$, using $u|_{\partial\Omega}=0$ and integrating by parts, there holds
\begin{align}\label{3.2-3}
\frac{1}{2}\frac{d}{dt}\int|\nabla u|^2\,dx+\int\rho|u_t|^2\,dx
=-\int\rho u\cdot\nabla u\cdot u_t\,dx+\int b\cdot\nabla b\cdot u_t\,dx.
\end{align}
By H\"{o}lder, Lemma \ref{lem.B4} and Young inequalities, it yields that
\begin{align}\label{3.2-4}
|-\int\rho u\cdot\nabla u\cdot u_t\,dx|\leq\frac{1}{2}\|\sqrt{\rho}u_t\|^2_{L^2}+C\|\sqrt{\rho}u\|^2_{L^4}\|\nabla u\|_{L^2}\|\nabla^2 u\|_{L^2}.
\end{align}
Utilizing integration by parts together with $\mathrm{div}\,b=0$, $b|_{\partial\Omega}=0$, $u|_{\partial\Omega}=0$, Lemma \ref{lem.B4}, H\"{o}lder, Lemma \ref{newgn} and Young inequalities and \eqref{poin}, one has
\begin{align}\label{3.2-5}
&\int b\cdot\nabla b\cdot u_t\,dx\nonumber\\
=&-\frac{d}{dt}\int b\cdot\nabla u\cdot b\,dx+\int\left(\triangle b-u\cdot\nabla b+b\cdot\nabla u\right)\cdot\nabla u\cdot b\,dx\nonumber\\
&+\int b\cdot\nabla u\cdot \left(\triangle b-u\cdot\nabla b+b\cdot\nabla u\right)\,dx\nonumber\\
\leq&-\frac{d}{dt}\int b\cdot\nabla u\cdot b\,dx+C\left(\|{\nabla}^2 b\|_{L^2}\|\nabla u\|_{L^4}\|b\|_{L^4}+\|\nabla u\|^2_{L^4}\|b\|^2_{L^4}\right)\\
&+C\|u\|_{L^\infty}\|\nabla b\|_{L^2}\|\nabla u\|_{L^4}\|b\|_{L^4}\nonumber\\
\leq&-\frac{d}{dt}\int b\cdot\nabla u\cdot b\,dx+\frac{1}{4}\|{\nabla}^2 b\|^2_{L^2}+C\|\nabla u\|_{L^2}\|\nabla^2 u\|_{L^2}\|b\|^2_{L^4}\nonumber\\
&+C\|u\|^\f13_{L^6}\|\nabla u\|^\f23_{L^6}\|\nabla b\|_{L^2}\|\nabla u\|^{\frac{1}{2}}_{L^2}\|\nabla^2 u\|^{\frac{1}{2}}_{L^2}\|b\|_{L^4}\nonumber\\
\leq&-\frac{d}{dt}\int b\cdot\nabla u\cdot b\,dx+\frac{1}{4}\|{\nabla}^2 b\|^2_{L^2}+C\|\nabla u\|_{L^2}\|\nabla^2 u\|_{L^2}\|b\|^2_{L^4}\nonumber\\
&+C\|\nabla b\|_{L^2}\|b\|_{L^4}\|\nabla u\|_{L^2}\|\nabla u\|_{H^1}.\nonumber
\end{align}
Substituting \eqref{3.2-4} and \eqref{3.2-5} into \eqref{3.2-3}, we derive
\begin{align}\label{3.2-6}
&\frac{1}{2}\frac{d}{dt}\|\nabla u\|^2_{L^2}+\frac{1}{2}\|\sqrt{\rho}u_t\|^2_{L^2}\nonumber\\
\leq&-\frac{d}{dt}\int b\cdot\nabla u\cdot b\,dx+\frac{1}{4}\|{\nabla}^2b\|^2_{L^2}+C\|\sqrt{\rho}u\|^2_{L^4}\|\nabla u\|_{L^2}\|\nabla^2 u\|_{L^2}\\
&+C\|\nabla u\|_{L^2}\|\nabla^2 u\|_{L^2}\|b\|^2_{L^4}+C\|\nabla b\|_{L^2}\|b\|_{L^4}\|\nabla u\|_{L^2}\|\nabla u\|_{H^1}.\nonumber
\end{align}

Taking inner product of $\eqref{1-1}_3$ with $\triangle b$, using integrating by parts, Lemma \ref{lem.B4}, H\"{o}lder and Younng inequalities, it follows that
\begin{align}\label{3.2-7}
&\frac{1}{2}\frac{d}{dt}\|\nabla b\|^2_{L^2}+\|\nabla^2b\|^2_{L^2}=\int u\cdot\nabla b\cdot\triangle b\,dx-\int b\cdot\nabla u\cdot\triangle b\,dx\nonumber\\
\leq&\int|\nabla u||\nabla b|^2\,dx+\int|b||\nabla u||\triangle b|\,dx\leq C\left(\|\nabla u\|_{L^2}\|\nabla b\|^2_{L^4}+\|{\nabla}^2b\|_{L^2}\|\nabla u\|_{L^4}\|b\|_{L^4}\right)\nonumber\\
\leq&C\left(\|\nabla u\|_{L^2}\|\nabla b\|_{L^2}\|\nabla^2 b\|_{L^2}+\|{\nabla}^2b\|_{L^2}\|\nabla u\|^{\frac{1}{2}}_{L^2}\|\nabla^2 u\|^{\frac{1}{2}}_{L^2}\|b\|_{L^4}\right)\\
\leq&\frac{1}{4}\|\nabla^2b\|^2_{L^2}+C\left[\|\nabla u\|^2_{L^2}\|\nabla b\|^2_{L^2}+\|\nabla u\|_{L^2}\|\nabla^2 u\|_{L^2}\|b\|^2_{L^4}\right].\nonumber
\end{align}
Using Lemma \ref{lem.B4} and \eqref{3.1-2}, it follows that
\begin{align*}
	\int|b\cdot\nabla u\cdot b|\,dx\leq\frac{1}{4}\|\nabla u\|^2_{L^2}+C_1\|b\|^4_{L^4}\leq\frac{1}{4}\|\nabla u\|^2_{L^2}+C\|\nabla b\|^2_{L^2}.
\end{align*}
Next, we introduce a new quantity
$A(t)\triangleq\|\nabla u\|^2_{L^2}+\|\nabla b\|^2_{L^2}+\int b\cdot\nabla u\cdot b\,dx$ satisfying
\begin{align}\label{3.2-9}
	\frac{3}{4}\left(\|\nabla u\|^2_{L^2}+\|\nabla b\|^2_{L^2}\right)-C_1\|b\|^4_{L^4}\leq A(t)\leq C\|\nabla u\|^2_{L^2}+C\|\nabla b\|^2_{L^2},
\end{align}
and then update \eqref{3.2-6} and \eqref{3.2-7} as
\begin{align}\label{3.2-8}
&A'(t)+\|\sqrt{\rho}u_t\|^2_{L^2}+\|\nabla^2b\|^2_{L^2}\nonumber\\
\leq&C\left[\|\sqrt{\rho}u\|^2_{L^4}\|\nabla u\|_{L^2}\|\nabla^2 u\|_{L^2}+\|\nabla u\|^2_{L^2}\|\nabla b\|^2_{L^2}\right]\\
&+C\|\nabla u\|_{L^2}\|\nabla^2 u\|_{L^2}\|b\|^2_{L^4}+C\|\nabla b\|_{L^2}\|b\|_{L^4}\|\nabla u\|_{L^2}\|\nabla u\|_{H^1}.\nonumber
\end{align}

Taking inner product of $\eqref{1-1}_3$ with $|b|^2b$, and integrating by parts, using H\"{o}lder inequality and Lemma \ref{lem.B4}, we have
\begin{align}\label{3.2-10}
&\frac{1}{4}\frac{d}{dt}\|b\|^4_{L^4}+\||\nabla b||b|\|^2_{L^2}+\frac{1}{2}\|\nabla|b|^2\|^2_{L^2}\nonumber\\
\leq&C\|\nabla u\|_{L^2}\||b|^2\|^2_{L^4}\leq C\|\nabla u\|_{L^2}\||b|^2\|_{L^2}\|\nabla|b|^2\|_{L^2}\\
\leq&\frac{1}{4}\|\nabla|b|^2\|^2_{L^2}+C\|\nabla u\|^2_{L^2}\|b\|^4_{L^4},\nonumber
\end{align}
which implies, after utilizing Gr\"{o}nwall inequality and  \eqref{3.1-2}, that
\begin{align}\label{3.2-11}
&\sup\limits_{t\in[0, T]}\|b\|^4_{L^4}+\int_0^T\||\nabla b||b|\|^2_{L^2}\,dt+\int_0^T\|\nabla|b|^2\|^2_{L^2}\,dt
\nonumber\\
\leq&C\|b_0\|^4_{L^4}e^{\int_0^T\|\nabla u\|^2_{L^2}\,dt}\leq C.
\end{align}
Multiplying on both sides of \eqref{3.2-10} by $t$, it yields that
\begin{align}\label{3.2-010}
	\frac{1}{4}\frac{d}{dt}\left(t\|b\|^4_{L^4}\right)
	\leq Ct\|\nabla u\|^2_{L^2}\|b\|^4_{L^4}+\f14\|b\|^4_{L^4}\leq Ct\|\nabla u\|^2_{L^2}\|b\|^4_{L^4}+C\|b\|^2_{L^2}\|\nabla b\|^2_{L^2},
\end{align}
and then
\begin{align}\label{3.2-0010}
	\sup\limits_{t\in[0, T]}t\|b\|^4_{L^4}\leq C.
\end{align}

From Lemma \ref{lem.B5}, \eqref{3.1-2}, Lemma \ref{newgn} and \eqref{3.2-11}, it deduces that
\begin{align}\label{3.2-13}
\|\nabla^2u\|_{L^2}\leq&C\left(\|\rho u_t+\rho u\cdot\nabla u+b\cdot\nabla b\|_{L^2}\right)\nonumber\\
\leq&C\left(\|\sqrt{\rho}u_t\|_{L^2}+\|\sqrt{\rho}u\|_{L^4}\|\nabla u\|_{L^4}+\||\nabla b||b|\|_{L^2}\right)\\
\leq&C\left(\|\sqrt{\rho}u_t\|_{L^2}+\|\sqrt{\rho}u\|_{L^4}\|\nabla u\|^{\frac{1}{2}}_{L^2}\|\nabla^2 u\|^{\frac{1}{2}}_{L^2}+\||\nabla b||b|\|_{L^2}\right),\nonumber
\end{align}
this is
\begin{align}\label{3.2-013}
\|\nabla^2u\|_{L^2}&\leq C\left[\|\sqrt{\rho}u_t\|_{L^2}+\|\sqrt{\rho}u\|^2_{L^4}\|\nabla u\|_{L^2}+\||\nabla b||b|\|_{L^2}\right]\notag\\
&\leq C\left[\|\sqrt{\rho}u_t\|_{L^2}+\|\sqrt{\rho}u\|^2_{L^4}\|\nabla u\|_{L^2}+\|b\|_{L^4}\|\nabla b\|_{L^2}^{\f12}\|\nabla^2 b\|_{L^2}^{\f12}\right]\\
&\leq C\left[\|\sqrt{\rho}u_t\|_{L^2}+\|\sqrt{\rho}u\|^2_{L^4}\|\nabla u\|_{L^2}+\|\nabla b\|_{L^2}^{\f12}\|\nabla^2 b\|_{L^2}^{\f12}\right]\notag
\end{align}
Substituting \eqref{3.2-013} into \eqref{3.2-8} and making use of \eqref{3.2-11} and Young inequality, it yields that
\begin{align}\label{3.2-14}
&A'(t)+\|\sqrt{\rho}u_t\|^2_{L^2}+\|\nabla^2b\|^2_{L^2}\\
\leq&\frac{1}{2}\||\nabla b||b|\|^2_{L^2}+C\left[\left(1+\|\sqrt{\rho}u\|^4_{L^4}\right)\|\nabla u\|^2_{L^2}+\left(1+\|\nabla u\|^2_{L^2}\right)\|\nabla b\|^2_{L^2}\right],\nonumber
\end{align}
which further implies, after multiplying $4C_1$ on both sides of \eqref{3.2-10}, adding the resultant with \eqref{3.2-14} and using Lemma \ref{lem.B3} and \eqref{3.1-7}, that
\begin{align}\label{3.2-16}
&\frac{d}{dt}\left(A(t)+C_1\|b\|^4_{L^4}\right)+\|\sqrt{\rho}u_t\|^2_{L^2}+\|{\nabla}^2 b\|^2_{L^2}+\||\nabla b||b|\|^2_{L^2}\nonumber\\
\leq&C\left[\left(1+\|\sqrt{\rho}u\|^4_{L^4}\right)\|\nabla u\|^2_{L^2}+\left(1+\|\nabla b\|^2_{L^2}\right)\left(1+\|\nabla u\|^2_{L^2}\right)\right]\\
\leq&C\|\nabla u\|^2_{L^2}\left(2+\|\nabla u\|^2_{L^2}\right)\ln\left(2+\|\nabla u\|^2_{L^2}\right)+C\left(1+\|\nabla u\|^2_{L^2}\right)\|\nabla b\|^2_{L^2}.\nonumber
\end{align}
To estimate \eqref{3.2-16}, we need to employ Gr\"{o}nwall
inequality and as a preparation, it is necessary to set
\begin{align*}
f(t)\triangleq A(t)+C_1\|b\|^4_{L^4}+2,\quad g(t)\triangleq\|\nabla u\|^2_{L^2}+\|\nabla b\|^2_{L^2}+2.
\end{align*}
Then, according to \eqref{3.2-16}, we have
\begin{align}\label{3.2-17}
f'(t)\leq Cg(t)f(t)+Cg(t)f(t)\ln(f(t)),
\end{align}
i.e.
\begin{align}\label{3.2-18}
\left(\ln f(t)\right)'\leq Cg(t)+Cg(t)\ln(f(t)),
\end{align}
which yields, after applying Gr\"{o}nwall inequality and \eqref{3.1-2}, that
\begin{align}\label{3.2-019}
\sup\limits_{t\in[0, T]}\ln(f(t))\leq C.
\end{align}

Thus, thanks to \eqref{3.2-9} and \eqref{3.2-11}, we have proved
\begin{align}\label{3.2-19}
\sup\limits_{t\in[0, T]}\left(\|\nabla u\|^2_{L^2}+\|\nabla b\|^2_{L^2}\right)\leq C,
\end{align}
which further implies, after integrating \eqref{3.2-16} in time and employing \eqref{3.1-2} and \eqref{3.1-7}
\begin{align}\label{3.2-20}
\sup\limits_{t\in[0, T]}\left(\| u\|^2_{H^1}+\| b\|^2_{H^1}\right)+\int_0^T\left(\|\sqrt{\rho}u_t\|^2_{L^2}+\|{\nabla}^2 b\|^2_{L^2}+\||b||\nabla b|\|^2_{L^2}\right)\,dt
\leq C.
\end{align}
Utilizing Lemma \ref{lem.B3}, \ref{lem.B3.1} and \eqref{3.2-20}, it follows that
\begin{align}\label{3.2-21}
	\sup\limits_{t\in[0, T]}\|\sqrt{\rho}u\|_{L^4}\leq C,
\end{align}
which further implies, after using \eqref{3.2-21}, \eqref{3.2-013},
\eqref{3.2-20}, that
\begin{align}\label{3.2-22}
\int_0^T\|\nabla^2u\|^2_{L^2}\,dt\leq C.
\end{align}
 Moreover, on the basis of $\eqref{1-1}_3$, Lemma \ref{newgn} and \eqref{3.2-19}, we have
\begin{align}\label{3.3-8.0}
	\|b_t\|^2_{L^2}\leq&C\left(\|\triangle b\|^2_{L^2}+\|u\|^2_{L^6}\|\nabla b\|^2_{L^3}+\|b\|^2_{L^\infty}\|\nabla u\|^2_{L^2}\right)\nonumber\\
	\leq&C\left(\|\nabla^2 b\|^2_{L^2}+\|\nabla u\|^2_{L^2}\|\nabla b\|_{L^2}^{\f43}\|\nabla^2 b\|_{L^2}^{\f23}+\| b\|_{L^6}^{\f43}\|\nabla^2 b\|_{L^2}^{\f23}\|\nabla u\|^2_{L^2}\right)\nonumber\\
	\leq&C\|\nabla^2b\|^2_{L^2}+C\|\nabla u\|^2_{L^2}\|\nabla b\|^2_{L^2},
\end{align}
Thus, combining \eqref{3.2-11}, \eqref{3.2-20}, \eqref{3.2-22} and \eqref{3.3-8.0} leads to \eqref{3.2-1}.

It remains to prove \eqref{3.2-01}, to this end, we multiply \eqref{3.2-16} by $t$ and apply
 \eqref{3.2-19} and \eqref{3.2-9} to get
\begin{align}\label{3.2-23}
&\frac{d}{dt}\left[t\left(A(t)+C_1\|b\|^4_{L^4}\right)\right]+t\left(\|\sqrt{\rho}u_t\|^2_{L^2}+\|{\nabla}^2 b\|^2_{L^2}+\||b||\nabla b|\|^2_{L^2}\right)\nonumber\\
\leq& Ct\left(\|\nabla u\|^2_{L^2}+\|\nabla b\|^2_{L^2}\right)^2+A(t)+C_1\|b\|^4_{L^4}+\|\nabla b\|^2_{L^2}\\
\leq& C\left(\|\nabla u\|^2_{L^2}+\|\nabla b\|^2_{L^2}\right)\left[t\left(A(t)+C_1\|b\|^4_{L^4}\right)\right]+ C\left(\|\nabla u\|^2_{L^2}+\|\nabla b\|^2_{L^2}\right),\nonumber
\end{align}
which leads to
\begin{align}\label{3.2-01.0}
	\sup\limits_{t\in[0, T]}t\left(\|\nabla u\|^2_{L^2}+\|\nabla b\|^2_{L^2}\right)
	+\int_0^Tt\left(\|\sqrt{\rho}u_t\|^2_{L^2}+\|{\nabla}^2 b\|^2_{L^2}+\||b||\nabla b|\|^2_{L^2}\right)\,dt\leq C,
\end{align}
after employing Gr\"{o}nwall inequality, \eqref{3.1-2} and \eqref{3.2-0010}. Finally, by multiplying \eqref{3.3-8.0} with $t$ and applying \eqref{3.2-01.0} and \eqref{3.2-19}, we can finish the proof of this lemma.
\end{proof}

\vskip .1in
Next, with the help of Lemma \ref{lem.B3.2}, we are able to establish the following various time and spatial derivatives and time-weighted estimates of velocity and magnetic fields.
\vskip .1in

\begin{lemma}\label{lem.B3.3}
Suppose that $(\rho, u, b)$ is an axisymmetric solution to the system \eqref{1-1}-\eqref{1-2} and \eqref{1-2boundary}, then there exists a genuine constant $C$ depending only on $\bar{\rho}$, $\|\rho_0-\bar{\rho}\|_{L^{\f32}}$,  $\| u_0\|_{H^1}$ and $\|b_0\|_{H^1}$ such that
\begin{align}\label{3.3-1}
\sup\limits_{t\in[0, T]}t^i\left(\|\sqrt{\rho}u_t\|^2_{L^2}+\|b_t\|_{L^2}^2\right)+\int_0^Tt^i\left(\|\nabla u_t\|^2_{L^2}+\|\nabla b_t\|^2_{L^2}\right)\,dt\leq C,
\end{align}
and
\begin{align}\label{3.3-01}
t^2\left(\|\nabla^2 u\|^2_{L^2}+\|\nabla^2 b\|^2_{L^2}\right)+\int_0^Tt\left(\| u_t\|^2_{L^2}+\|\nabla^2 u\|^2_{L^2}\right)\,dt\leq C,
\end{align}
for $i\in\{1, 2\}$.
\end{lemma}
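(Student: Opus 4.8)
The plan is to differentiate the momentum and induction equations in time and to test the resulting systems with $u_t$ and $b_t$, thereby upgrading the $L^\infty_tH^1$ and $L^2_t\dot H^2$ control of Lemma \ref{lem.B3.2} to the weighted bounds \eqref{3.3-1}. Writing $\eqref{1-1}_2$ in the non-conservative form $\rho u_t+\rho u\cdot\nabla u-\triangle u+\nabla p=b\cdot\nabla b$, differentiating in $t$, pairing with $u_t$, and using $\mathrm{div}\,u_t=0$ together with the continuity equation $\rho_t=-\mathrm{div}(\rho u)$, I first obtain
\begin{align*}
\frac12\frac{d}{dt}\|\sqrt\rho u_t\|^2_{L^2}+\|\nabla u_t\|^2_{L^2}
=&-\int\rho_t|u_t|^2\,dx-\int\rho_t\,u\cdot\nabla u\cdot u_t\,dx-\int\rho\,u_t\cdot\nabla u\cdot u_t\,dx\\
&+\int\left(b_t\cdot\nabla b+b\cdot\nabla b_t\right)\cdot u_t\,dx.
\end{align*}
Differentiating $\eqref{1-1}_3$ in $t$, testing with $b_t$, and using $\mathrm{div}\,u=0$ to eliminate $\int u\cdot\nabla b_t\cdot b_t\,dx$, I similarly obtain
\begin{align*}
\frac12\frac{d}{dt}\|b_t\|^2_{L^2}+\|\nabla b_t\|^2_{L^2}
=-\int u_t\cdot\nabla b\cdot b_t\,dx+\int b_t\cdot\nabla u\cdot b_t\,dx+\int b\cdot\nabla u_t\cdot b_t\,dx.
\end{align*}

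The decisive structural point is the cancellation of the top-order Lorentz coupling: integrating by parts and using $\mathrm{div}\,b=0$ with $b|_{\partial\Omega}=0$ gives $\int b\cdot\nabla b_t\cdot u_t\,dx=-\int b\cdot\nabla u_t\cdot b_t\,dx$, so upon \emph{adding} the two identities the terms carrying $\nabla u_t$ through the magnetic force drop out exactly. The surviving nonlinearities are the inertial terms (including those with $\rho_t$) and the lower-order mixed terms $\int b_t\cdot\nabla b\cdot u_t\,dx$ and $\int u_t\cdot\nabla b\cdot b_t\,dx$. For the $\rho_t$-terms I would substitute $\rho_t=-\mathrm{div}(\rho u)$ and integrate by parts to transfer one derivative, controlling the factor $\rho u$ through $\|\sqrt\rho u\|_{L^4}\le C$ from \eqref{3.2-21}; every other term is bounded by H\"older together with the anisotropic Gagliardo--Nirenberg inequalities of Lemmas \ref{newgn} and \ref{lem.B4}, the embedding \eqref{poin} (so that $\|u_t\|_{L^6}\le C\|\nabla u_t\|_{L^2}$), and the $H^1$, $\dot H^2$ bounds of Lemma \ref{lem.B3.2}, with Young's inequality used to absorb small multiples of $\|\nabla u_t\|^2_{L^2}+\|\nabla b_t\|^2_{L^2}$ into the left side. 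The aim is to reach a differential inequality $\frac{d}{dt}E(t)+D(t)\le \alpha(t)E(t)$ with $E=\|\sqrt\rho u_t\|^2_{L^2}+\|b_t\|^2_{L^2}$, $D=\|\nabla u_t\|^2_{L^2}+\|\nabla b_t\|^2_{L^2}$, where $\alpha$ is a combination of $\|\nabla u\|^2_{L^2}$, $\|\nabla^2u\|^2_{L^2}$, $\|\nabla b\|^2_{L^2}$, $\|\nabla^2b\|^2_{L^2}$ that is integrable on $[0,T]$ \emph{uniformly in} $T$ by the dissipation estimates in \eqref{3.2-1}.

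Because no compatibility condition is imposed, $E(0)$ need not be finite, so I multiply by the weight $t^i$ and use $\frac{d}{dt}(t^iE)+t^iD\le \alpha\,(t^iE)+i\,t^{i-1}E$. For $i=1$ the extra forcing $t^{0}E=E$ is time-integrable uniformly in $T$ by \eqref{3.2-1}, and Gr\"onwall's inequality (with bounded integrating factor $\exp\int_0^t\alpha$) yields \eqref{3.3-1} with $i=1$. For $i=2$ the forcing $2tE$ is time-integrable uniformly in $T$ by \eqref{3.2-01}, and the same Gr\"onwall argument gives \eqref{3.3-1} with $i=2$.

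Finally, \eqref{3.3-01} follows by bootstrapping from this weighted information. The Stokes estimate of Lemma \ref{lem.B5}, applied exactly as in \eqref{3.2-013}, bounds $\|\nabla^2u\|_{L^2}$ by $\|\sqrt\rho u_t\|_{L^2}$ plus lower-order terms, while $\eqref{1-1}_3$ gives $\|\nabla^2b\|_{L^2}\le C(\|b_t\|_{L^2}+\|u\cdot\nabla b\|_{L^2}+\|b\cdot\nabla u\|_{L^2})$; multiplying by $t^2$ and inserting \eqref{3.3-1} and Lemma \ref{lem.B3.2} yields the pointwise $t^2$-bounds. For $\int_0^T t\|u_t\|^2_{L^2}\,dt$ I would first recover the full $L^2$ norm from $\sqrt\rho u_t$ by the splitting of \eqref{3.1-7}, namely $\bar\rho\|u_t\|^2_{L^2}\le\|\sqrt\rho u_t\|^2_{L^2}+\|\rho-\bar\rho\|_{L^{3/2}}\|u_t\|^2_{L^6}\le\|\sqrt\rho u_t\|^2_{L^2}+C\|\nabla u_t\|^2_{L^2}$, and then integrate against the weight using the $i=1$ case of \eqref{3.3-1}; the term $\int_0^T t\|\nabla^2u\|^2_{L^2}\,dt$ is treated analogously through \eqref{3.2-013}. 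I expect the principal difficulty to lie in the nonlinear estimates of the second paragraph: arranging the cubic and quartic terms — in particular the mixed terms surviving the cancellation — so that the coefficient $\alpha(t)$ is genuinely time-integrable uniformly in $T$, since it is exactly this $T$-independence that lets the existence time reach infinity and underpins the decay rates of Theorem \ref{theorem.1}.
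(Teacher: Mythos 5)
Your overall architecture coincides with the paper's: differentiate both equations in time, test with $u_t$ and $b_t$, multiply by $t^i$ so that the (possibly infinite) initial value of $\|\sqrt{\rho}u_t\|^2_{L^2}+\|b_t\|^2_{L^2}$ never enters, close by Gr\"{o}nwall using the $T$-independent integrability from \eqref{3.2-1} (for $i=1$) and \eqref{3.2-01} (for $i=2$), and finally deduce \eqref{3.3-01} from the Stokes/elliptic estimates together with the density splitting \eqref{3.1-71}. One genuine difference is an improvement: you add the two identities and use the exact antisymmetry $\int b\cdot\nabla b_t\cdot u_t\,dx=-\int b\cdot\nabla u_t\cdot b_t\,dx$ to cancel the top-order magnetic coupling. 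The paper does not exploit this cancellation; it estimates those terms separately in \eqref{3.3-6}, which introduces $\tfrac{\delta}{2}\|\nabla b_t\|^2_{L^2}$ and then forces the bookkeeping of \eqref{3.3-9}--\eqref{3.3-12}, namely the weighted combination $2C_2\times\eqref{3.3-9}+\eqref{3.3-11}$ with $\delta=\tfrac{1}{4C_2}$. Your cancellation removes that step cleanly, and the surviving mixed terms are indeed lower order and harmless.

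There is, however, one concrete step that fails as stated: the treatment of the $\rho_t$-terms by ``controlling the factor $\rho u$ through $\|\sqrt{\rho}u\|_{L^4}\le C$.'' The dangerous term is $-\int\rho_t|u_t|^2\,dx=-2\int\rho u\cdot\nabla u_t\cdot u_t\,dx$, which is \emph{quadratic} in $u_t$. H\"{o}lder with $\|\sqrt{\rho}u\|_{L^4}$ leaves $\|\nabla u_t\|_{L^2}$ times an unweighted $\|u_t\|_{L^4}$ (or $\|\sqrt{\rho}u_t\|_{L^4}$, which Lemma \ref{lem.B3} only controls with extra logarithms of $\|u_t\|_{H^1}$). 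Because of the vacuum, the only way back to controlled quantities is the splitting $\|u_t\|^2_{L^2}\le C\left(\|\sqrt{\rho}u_t\|^2_{L^2}+\|\nabla u_t\|^2_{L^2}\right)$, and then $\|u_t\|_{L^4}\le C\|u_t\|^{1/2}_{L^2}\|\nabla u_t\|^{1/2}_{L^2}$ produces a term $C\|\sqrt{\rho}u\|_{L^4}\|\nabla u_t\|^2_{L^2}$ whose constant is in no way small; it cannot be absorbed by the left-hand side, and no smallness of $\|\sqrt{\rho}u\|_{L^4}$ is available (that is the very point of the theorem). The same obstruction appears if you put $u_t$ in $L^6$ via \eqref{poin}. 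The correct estimate --- the one in \eqref{3.3-4} --- never separates $u_t$ from the weight: one bounds $\left|2\int\rho u\cdot\nabla u_t\cdot u_t\,dx\right|\le 2\sqrt{\bar{\rho}}\,\|u\|_{L^\infty}\|\sqrt{\rho}u_t\|_{L^2}\|\nabla u_t\|_{L^2}$ with $\|u\|_{L^\infty}\le C\|u\|^{2/3}_{L^6}\|\nabla u\|^{1/3}_{L^6}$ from Lemma \ref{newgn}, so that Young gives $\tfrac16\|\nabla u_t\|^2_{L^2}+C\|\nabla u\|^2_{H^1}\|\sqrt{\rho}u_t\|^2_{L^2}$, and $\|\nabla u\|^2_{H^1}$ is a legitimate Gr\"{o}nwall coefficient by \eqref{3.2-1}; likewise $-\int\rho u_t\cdot\nabla u\cdot u_t\,dx$ must be split as $\sqrt{\bar{\rho}}\|\sqrt{\rho}u_t\|_{L^2}\|\nabla u\|_{L^3}\|u_t\|_{L^6}$, keeping one weighted factor. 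With this correction (and it is a correction of the named tool, not of your structure), the rest of your plan --- the cancellation, the weighting in $t^i$, and the bootstrap for \eqref{3.3-01} --- closes exactly as in the paper.
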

\begin{proof}
Differentiating $\eqref{1-1}_2$ with $t$, taking inner product of the resultant with $u_t$, making use of integration by parts and $\eqref{1-1}_1$, we get
\begin{align}\label{3.3-3}
&\frac{1}{2}\frac{d}{dt}\|\sqrt{\rho}u_t\|^2_{L^2}+\|\nabla u_t\|^2_{L^2}
=-2\int\rho u\cdot\nabla u_t\cdot u_t\,dx-\int\rho u_t\cdot\nabla u\cdot u_t\,dx\nonumber\\
&-\int\rho u\cdot\nabla\left(u\cdot\nabla u\cdot u_t\right)\,dx+\int b_t\cdot\nabla b\cdot u_t\,dx+\int b\cdot\nabla b_t\cdot u_t\,dx\nonumber\\
\triangleq&\sum\limits_{i=1}^{5}I_i.
\end{align}
According to H\"{o}lder, Gagliardo-Nirenberg and Young inequalities, $I_1$ and $I_2$ can be estimated as
\begin{align}\label{3.3-4}
|&I_1|+|I_2|
\leq2\int|\rho u\cdot\nabla u_t\cdot u_t|\,dx+\int|\rho u_t\cdot\nabla u\cdot u_t|\,dx\nonumber\\
\leq&2\sqrt{\bar{\rho}}\|u\|_{L^\infty}\|\sqrt{\rho}u_t\|_{L^2}\|\nabla u_t\|_{L^2}+\sqrt{\bar{\rho}}\|\sqrt{\rho}u_t\|_{L^2}\|\nabla u\|_{L^3}\|u_t\|_{L^6}\nonumber\\
\leq&C\left(\|u\|^\f23_{L^6}\|\nabla u\|^\f13_{L^6}+\|\nabla u\|^\f23_{L^2}\|\nabla^2 u\|^{\f13}_{L^2}\right)\|\sqrt{\rho}u_t\|_{L^2}\|\nabla u_t\|_{L^2}\\
\leq&\frac{1}{6}\|\nabla u_t\|^2_{L^2}+C\|\nabla u\|^2_{H^1}\|\sqrt{\rho}u_t\|^2_{L^2}.\nonumber
\end{align}
Similarly, the third term can be estimated as
\begin{align}\label{3.3-5}
I_3\leq&\int \rho|u|\left(|u_t||\nabla u|^2+|u||u_t||\nabla^2u|+|u||\nabla u_t||\nabla u|\right)\,dx\\
\leq&\bar{\rho}\left(\|u_t\|_{L^6}\|u\|_{L^6}\|\nabla u\|_{L^6}\|\nabla u\|_{L^2}+\|u\|^2_{L^6}\|u_t\|_{L^6}\|\nabla^2u\|_{L^2}+\|u\|^2_{L^6}\|\nabla u\|_{L^6}\|\nabla u_t\|_{L^2}\right)\nonumber\\
\leq&C\|\nabla u_t\|_{L^2}\|\nabla u\|^2_{L^2}\|\nabla u\|_{H^1}\nonumber\\
\leq&\frac{1}{6}\|\nabla u_t\|^2_{L^2}+C\|\nabla u\|^4_{L^2}\|\nabla u\|^2_{H^1}.\nonumber
\end{align}
For $I_4$ and $I_5$, we can obtain from integration by parts, Lemma \ref{lem.B4} and \eqref{3.2-11} that
\begin{align}\label{3.3-6}
&|I_4|+|I_5|
\leq\int |b_t\cdot\nabla u_t\cdot b|\,dx+\int |b\cdot\nabla u_t\cdot b_t|\,dx\nonumber\\
\leq&C\|b_t\|_{L^4}\|\nabla u_t\|_{L^2}\|b\|_{L^4}\nonumber\\
\leq&C\|\nabla u_t\|_{L^2}\|b_t\|^{\frac{1}{2}}_{L^2}\| \nabla b_t\|^{\frac{1}{2}}_{L^2}\nonumber\\
\leq&\frac{1}{6}\|\nabla u_t\|^2_{L^2}+C(\delta)\|b_t\|^2_{L^2}+\frac{\delta}{2}\|\nabla b_t\|^2_{L^2}.
\end{align}

Substituting \eqref{3.3-4}-\eqref{3.3-6} into \eqref{3.3-3} leads to
\begin{align}\label{3.3-7}
&\frac{d}{dt}\|\sqrt{\rho}u_t\|^2_{L^2}+\|\nabla u_t\|^2_{L^2}\nonumber\\
\leq&C\|\nabla u\|^4_{L^2}\left(\|\sqrt{\rho}u_t\|^2_{L^2}+\|\nabla u\|^2_{H^1}\right)+C(\delta)\|b_t\|^2_{L^2}+\delta\|\nabla b_t\|^2_{L^2}.
\end{align}
Thanks to \eqref{3.2-21}, it holds that
\begin{align}\label{3.3-007}
\|\nabla^2u\|_{L^2}\leq C\left(\|\sqrt{\rho}u_t\|_{L^2}+\|\nabla u\|_{L^2}+\||b||\nabla b|\|_{L^2}\right),
\end{align}
which together with \eqref{3.3-8.0} and \eqref{3.3-7} implies
\begin{align}\label{3.3-9}
\frac{d}{dt}&\|\sqrt{\rho}u_t\|^2_{L^2}+\|\nabla u_t\|^2_{L^2}\nonumber\\
\leq&C\|\nabla u\|^4_{L^2}\left(\|\sqrt{\rho}u_t\|_{L^2}^2+\|\nabla u\|_{L^2}^2+\||b||\nabla b|\|_{L^2}^2\right)+\delta\|\nabla b_t\|^2_{L^2}\nonumber\\
&+C(\delta)\|\nabla^2b\|^2_{L^2}+C(\delta)\|\nabla u\|^2_{L^2}+\|\nabla b\|^2_{L^2}.
\end{align}

Differentiating $\eqref{1-1}_3$ with $t$, taking inner product of the resultant with $b_t$, using integration by parts, Gagliardo-Nirenberg inequality, \eqref{3.1-2} and \eqref{3.2-19}, it yields that
\begin{align}\label{3.3-10}
&\frac{1}{2}\frac{d}{dt}\|b_t\|_{L^2}^2+\|\nabla b_t\|^2_{L^2}
\leq\int\left(|u_t||b|+|u||b_t|\right)|\nabla b_t|\,dx\nonumber\\
\leq&C\left(\|u_t\|_{L^6}\|b\|_{L^3}+\|u\|_{L^6}\|b_t\|_{L^3}\right)\|\nabla b_t\|_{L^2}\nonumber\\
\leq&C\left(\|\nabla u_t\|_{L^2}\|b\|^\f23_{L^2}\|\nabla b\|^\f13_{L^2}+\|\nabla u\|_{L^2}\|b_t\|^\f23_{L^2}\|\nabla b_t\|^\f13_{L^2}\right)\|\nabla b_t\|_{L^2}\nonumber\\
\leq&\frac{1}{2}\|\nabla b_t\|^2_{L^2}+C\|\nabla u_t\|^2_{L^2}+C\|b_t\|^2_{L^2},
\end{align}
which can be updated as, after using \eqref{3.3-8.0},
\begin{align}\label{3.3-11}
&\frac{d}{dt}\|b_t\|_{L^2}^2+\|\nabla b_t\|^2_{L^2}\nonumber\\
\leq&C_2\|\nabla u_t\|^2_{L^2}+C\|\nabla^2b\|^2_{L^2}+C\|\nabla u\|^2_{L^2}\|\nabla b\|^2_{L^2}.
\end{align}
where $C_2$ is a genuine constant.

Choosing $\delta=\frac{1}{4C_2}$, multiplying \eqref{3.3-9}
with $2C_2$ and adding the resultant with \eqref{3.3-11}, we have
\begin{align}\label{3.3-12}
&\frac{d}{dt}\left(2C_2\|\sqrt{\rho}u_t\|^2_{L^2}+\|b_t\|_{L^2}^2\right)+C_2\|\nabla u_t\|^2_{L^2}+\frac{1}{2}\|\nabla b_t\|^2_{L^2}\notag\\
\leq&C\|\nabla u\|^4_{L^2}\left(\|\sqrt{\rho}u_t\|_{L^2}^2+\|\nabla u\|_{L^2}^2+\||b||\nabla b|\|_{L^2}^2\right)\\
&+C\|\nabla^2b\|^2_{L^2}+C\|\nabla u\|^2_{L^2}\|\nabla b\|^2_{L^2},\nonumber
\end{align}
which also implies, after multiplying by $t^i$, that
\begin{align}\label{3.3-13}
&\frac{d}{dt}\left[t^i\left(2C_2\|\sqrt{\rho}u_t\|^2_{L^2}+\|b_t\|_{L^2}^2\right)\right]+t^i\left(C_2\|\nabla u_t\|^2_{L^2}+\f12\|\nabla b_t\|^2_{L^2}\right)\nonumber\\
\leq&Ct^i\|\nabla u\|^4_{L^2}\left(\|\sqrt{\rho}u_t\|_{L^2}^2+\|\nabla u\|_{L^2}^2+\||b||\nabla b|\|_{L^2}^2\right)\nonumber\\
&+Ct^i\left(\|\nabla^2b\|^2_{L^2}+\|\nabla u\|^2_{L^2}\|\nabla b\|^2_{L^2}\right)+Cit^{i-1}\left(\|\sqrt{\rho}u_t\|^2_{L^2}+\|b_t\|_{L^2}^2\right).
\end{align}
Integrating \eqref{3.3-13} over $(0,T)$ and utilizing \eqref{3.1-2}, \eqref{3.2-1} and \eqref{3.2-01}, one can obtain \eqref{3.3-1}.

To prove \eqref{3.3-01}, we can invoke the elliptic theory, Lemma \ref{newgn} and \eqref{3.2-19} to get
\begin{align}\label{3.3-15}
\|\nabla^2 b\|^2_{L^2}\leq&C\left(\|b_t\|^2_{L^2}+\|u\cdot\nabla b\|^2_{L^2}+\|b\cdot\nabla u\|^2_{L^2}\right)\nonumber\\
\leq&C\left(\|b_t\|^2_{L^2}+\|u\|^2_{L^6}\|\nabla b\|^2_{L^3}+\|b\|^2_{L^\infty}\|\nabla u\|^2_{L^2}\right)\nonumber\\
\leq&C\left(\|b_t\|^2_{L^2}+\|\nabla u\|^2_{L^2}\|\nabla b\|_{L^2}^{\f43}\|\nabla^2 b\|_{L^2}^{\f23}+\|b\|_{L^6}^{\f32}\|\nabla^2 b\|_{L^2}^{\f12}\|\nabla u\|^2_{L^2}\right)\\
\leq&C\left(\|b_t\|^2_{L^2}+\|\nabla u\|^4_{L^2}+\|\nabla b\|^4_{L^2}\right)+\frac{1}{2}\|\nabla^2 b\|^2_{L^2},\nonumber
\end{align}
which together with \eqref{3.2-01} and \eqref{3.3-1}  implies
\begin{align}\label{3.3-16}
t^2\|\nabla^2 b\|^2_{L^2}+\int_0^Tt\|\nabla^2 b\|^2_{L^2}\,dt\leq C.
\end{align}
With the help of \eqref{3.2-19}, one can update \eqref{3.2-13} as
\begin{align}\label{3.3-17}
	&\|\nabla^2u\|_{L^2}^2\leq C\left(\|\sqrt{\rho}u_t\|_{L^2}^2+\|u\|_{L^\infty}^2\|\nabla u\|_{L^2}^2+\|b\|_{L^\infty}^2\|\nabla b\|_{L^2}^2\right)\notag\\
	\leq& C\left[\|\sqrt{\rho}u_t\|_{L^2}^2+\|u\|_{L^6}^{\f32}\|\nabla^2 u\|_{L^2}^{\f12}\|\nabla u\|_{L^2}^2+\|b\|_{L^6}^{\f32}\|\nabla^2 b\|_{L^2}^{\f12}\|\nabla b\|_{L^2}^2\right]\\
	\leq& C\left[\|\sqrt{\rho}u_t\|_{L^2}^2+\|\nabla u\|_{L^2}^{\f{14}3}+\|\nabla b\|_{L^2}^{\f{14}3}+\| \nabla^2 b\|_{L^2}^2\right]+\f12\|\nabla^2u\|_{L^2}^2,\notag
\end{align}
which together with \eqref{3.2-1}, \eqref{3.2-01}, \eqref{3.3-1} and \eqref{3.3-16} yields
\begin{align}\label{3.3-17}
	t^2\|\nabla^2 u\|^2_{L^2}+\int_0^Tt\|\nabla^2 u\|^2_{L^2}\,dt\leq C.
\end{align}
Similarly with \eqref{3.1-7}, we first have
\begin{align}\label{3.1-71}
	&\bar{\rho}\int|u_t|^2\,dx=\int\rho|u_t|^2\,dx-\int\left(\rho-\bar{\rho}\right)|u_t|^2\,dx\nonumber\\
	&\leq C\|\sqrt{\rho}u_t\|^2_{L^2}+\|\rho-\bar{\rho}\|_{L^{\frac{3}{2}}}\|u_t\|^2_{L^6}
	\leq C\left(\|\sqrt{\rho}u_t\|^2_{L^2}+\|\nabla u_t\|^2_{L^2}\right),
\end{align}
which together with \eqref{3.2-01} and \eqref{3.3-1} implies \eqref{3.3-01}. Thus, the proof of Lemma \ref{lem.B3.3} is finished.
\end{proof}

\vskip .1in
Based on Lemmas \ref{lem.B3.1}-\ref{lem.B3.3}, we give the last a priori estimates, which is essential (the $L^{1}\left([0, +\infty); L^\infty\right)$ norm of $\nabla u$) to extend the local strong solution to be a global one.
\vskip .1in
\begin{lemma}\label{lem.B3.4}
Supposing $(\rho, u, b)$ is an axisymmetric solution to the system \eqref{1-1}-\eqref{1-2} and \eqref{1-2boundary}, then there exists a genuine constant $C$ depending only on $\bar{\rho}$, $\|\rho_0-\bar{\rho}\|_{L^{\f32}}$,  $\|\nabla {\rho}_0\|_{L^2}$, $\|u_0\|_{H^1}$ and $\|b_0\|_{H^1}$ such that 
\begin{align}\label{3.4-1}
\sup\limits_{t\in[0, T]}\|\nabla\rho\|_{L^2}+\int_0^T\left(\|\nabla u\|_{L^\infty}+\|\rho_t\|^4_{L^{2}}\right)+t^2\left(\|\nabla^2u\|_{L^6}^2+\|\nabla^2b\|_{L^6}^2\right)\,dt\leq C.
\end{align}
\end{lemma}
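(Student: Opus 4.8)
The plan is to prove the four bounds in \eqref{3.4-1} in a definite order, because they are coupled: the $L^2$ bound on $\nabla\rho$ and the $L^4_tL^2_x$ bound on $\rho_t$ both rest on the $L^1_tL^\infty_x$ control of $\nabla u$, which in turn rests on the time-weighted $L^6$ bounds for the second derivatives. Accordingly I would first establish $\int_0^T t^2\big(\|\nabla^2u\|_{L^6}^2+\|\nabla^2b\|_{L^6}^2\big)\,dt\leq C$. For $\nabla^2 u$, rewrite $\eqref{1-1}_2$ as a Stokes system with force $F=-\rho u_t-\rho u\cdot\nabla u+b\cdot\nabla b$ and apply the $L^6$ Stokes estimate of Lemma \ref{lem.B5} to get $\|\nabla^2u\|_{L^6}\leq C\|F\|_{L^6}$. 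Bounding $\|\rho u_t\|_{L^6}\leq C\|\nabla u_t\|_{L^2}$, $\|\rho u\cdot\nabla u\|_{L^6}\leq C\|u\|_{L^\infty}\|\nabla u\|_{L^6}$ and $\|b\cdot\nabla b\|_{L^6}\leq C\|b\|_{L^\infty}\|\nabla b\|_{L^6}$, then controlling the $L^\infty$ and $L^6$ factors through $\|\nabla^2u\|_{L^2}$ and $\|\nabla^2b\|_{L^2}$ by Sobolev embedding, I multiply by $t^2$ and integrate. The right-hand side is then dominated by the already-proven bound $\int_0^T t^2\|\nabla u_t\|_{L^2}^2\,dt\leq C$ (Lemma \ref{lem.B3.3} with $i=2$) together with the pointwise bounds $t^2\|\nabla^2u\|_{L^2}^2\leq C$ and $t^2\|\nabla^2b\|_{L^2}^2\leq C$ from \eqref{3.3-16}--\eqref{3.3-17}. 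The bound for $\nabla^2b$ is analogous, using elliptic regularity for the Dirichlet Laplacian applied to $\Delta b=b_t+u\cdot\nabla b-b\cdot\nabla u$ and the estimate $\int_0^T t^2\|\nabla b_t\|_{L^2}^2\,dt\leq C$.

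The crux, and the step I expect to be the main obstacle, is the bound $\int_0^T\|\nabla u\|_{L^\infty}\,dt\leq C$. Here the $2D$-type Gagliardo--Nirenberg inequality for axisymmetric fields (Lemma \ref{newgn}) is decisive: applied with $j=1$, $p=\infty$, $m=2$, $r=6$, $q=2$ it yields
\[
\|\nabla u\|_{L^\infty}\leq C\|\nabla^2u\|_{L^6}^{3/5}\|\nabla u\|_{L^2}^{2/5},
\]
with exponent $3/5$ strictly below the $3/4$ that the genuine $3D$ inequality \eqref{key10} would force. Using $\sup_t\|\nabla u\|_{L^2}\leq C$ and writing $\|\nabla^2u\|_{L^6}^{3/5}=t^{-3/5}\big(t\|\nabla^2u\|_{L^6}\big)^{3/5}$, a Hölder inequality in time with exponents $(10/3,\,10/7)$ gives
\[
\int_0^T\|\nabla u\|_{L^\infty}\,dt\leq C\Big(\int_0^T t^2\|\nabla^2u\|_{L^6}^2\,dt\Big)^{3/10}\Big(\int_0^T t^{-6/7}\,dt\Big)^{7/10}\leq C,
\]
the time integral converging precisely because $6/7<1$. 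Had only the $3D$ exponent been available, the resulting time exponent would exceed $1$ and the argument would break down; this is exactly where the axisymmetric structure of the flow in the exterior of a cylinder is indispensable.

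With $\int_0^T\|\nabla u\|_{L^\infty}\,dt\leq C$ in hand, the density estimates follow by standard transport arguments. Differentiating $\eqref{1-1}_1$ in space, testing against $\nabla\rho$ and using $\mathrm{div}\,u=0$ yields $\frac{d}{dt}\|\nabla\rho\|_{L^2}^2\leq C\|\nabla u\|_{L^\infty}\|\nabla\rho\|_{L^2}^2$, so Grönwall's inequality gives $\sup_{t\in[0,T]}\|\nabla\rho\|_{L^2}^2\leq\|\nabla\rho_0\|_{L^2}^2\exp\!\big(C\int_0^T\|\nabla u\|_{L^\infty}\,dt\big)\leq C$. Finally, from $\rho_t=-u\cdot\nabla\rho$ one has $\|\rho_t\|_{L^2}\leq\|u\|_{L^\infty}\|\nabla\rho\|_{L^2}$, hence $\|\rho_t\|_{L^2}^4\leq C\|u\|_{L^\infty}^4$; bounding $\|u\|_{L^\infty}^4\leq C\|\nabla u\|_{L^2}^2\|\nabla^2u\|_{L^2}^2\leq C\|\nabla^2u\|_{L^2}^2$ and integrating, the bound $\int_0^T\|\nabla^2u\|_{L^2}^2\,dt\leq C$ from \eqref{3.2-22} closes the estimate and completes the proof of \eqref{3.4-1}.

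I emphasize that the only genuinely delicate point is the matching of the $2D$ Gagliardo--Nirenberg exponent with the time singularity at $t=0$: everything else reduces to the Stokes and elliptic $L^6$ estimates fed by the time-weighted bounds of Lemmas \ref{lem.B3.2}--\ref{lem.B3.3}, while the density bounds are purely transport-theoretic consequences of the $L^1_tL^\infty_x$ control of $\nabla u$.
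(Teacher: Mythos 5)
Your overall architecture coincides with the paper's (weighted $L^6$ estimates for $\nabla^2u,\nabla^2b$ first, then $\int_0^T\|\nabla u\|_{L^\infty}\,dt$, then the transport bounds for $\nabla\rho$ and $\rho_t$), but the crucial middle step has a genuine gap: the constant in \eqref{3.4-1} must depend only on the initial data and \emph{not} on $T$ — this $T$-independence is exactly what the paper's global-in-time bounds and decay rates in Theorem \ref{theorem.1} rest on — and your single Hölder-in-time application does not deliver it. Indeed $\int_0^T t^{-6/7}\,dt=7\,T^{1/7}$, so what your computation actually proves is $\int_0^T\|\nabla u\|_{L^\infty}\,dt\leq C\,T^{1/10}$, which degenerates as $T\to\infty$ and, fed into Gr\"{o}nwall, gives only $\|\nabla\rho\|_{L^2}\leq\|\nabla\rho_0\|_{L^2}e^{CT^{1/10}}$. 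Nor can this be repaired by a better choice of exponent: writing $\|\nabla^2u\|_{L^6}^{\theta}=t^{-\theta}\bigl(t\|\nabla^2u\|_{L^6}\bigr)^{\theta}$ and applying Hölder against $\int_0^T t^2\|\nabla^2u\|^2_{L^6}\,dt\leq C$, the conjugate time power is $t^{-2\theta/(2-\theta)}$, integrable near $t=0$ only when $\theta<2/3$ and integrable near $t=\infty$ only when $\theta>2/3$; no single $\theta$ works uniformly on $[0,T]$. This is precisely why the paper splits the integral at $\zeta(t)=\min\{1,t\}$: on $[0,\zeta(t)]$ it uses an $L^r$ estimate with $r<6$ and the $i=1$ weights of Lemma \ref{lem.B3.3} (your $\theta=3/5$ inequality would equally serve there), while on $[\zeta(t),T]$ it uses $\|\nabla u\|_{L^\infty}\leq C\|u\|^{1/4}_{L^2}\|\nabla^2u\|^{3/4}_{L^6}$, whose exponent $3/4>2/3$ makes $\int_1^T t^{-6/5}\,dt\leq 5$ uniformly bounded, together with the $i=2$ weights; see \eqref{3.4-05}, \eqref{3.4-05.} and \eqref{3.4-6}. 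Your proposal is missing this two-regime decomposition, and without it the lemma as stated is not proved.

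Two further points. First, the inequality $\|u\|^4_{L^\infty}\leq C\|\nabla u\|^2_{L^2}\|\nabla^2u\|^2_{L^2}$ you invoke for $\rho_t$ is false, even for axisymmetric fields vanishing on the cylinder: a smoothed logarithmic profile $u\approx\min\{\log r/\log R,\,1\}$, truncated back to zero at $r\approx R^2$, has $\|u\|_{L^\infty}=1$ while $\|\nabla u\|_{L^2}\|\nabla^2u\|_{L^2}\sim(\log R)^{-3/2}\to0$. The paper instead uses the 2D Agmon-type bound $\|u\|^2_{L^\infty}\leq C\|u\|_{L^2}\|\nabla^2u\|_{L^2}$ together with the uniform bound on $\|u\|_{L^2}$ that follows from \eqref{3.1-7} and \eqref{3.2-19}; this fix is easy but necessary. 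Second, your index bookkeeping for Lemma \ref{newgn} is off: with $j=1$, $m=2$, $p=\infty$, $r=6$, $q=2$ the 2D relation forces $\alpha=3/4$ and the low-order factor $\|u\|^{1/4}_{L^2}$; the inequality you display, $\|\nabla u\|_{L^\infty}\leq C\|\nabla^2u\|^{3/5}_{L^6}\|\nabla u\|^{2/5}_{L^2}$, is the lemma applied to $\nabla u$ with $j=0$, $m=1$ (as in \eqref{3.4-0.5}). It is a correct inequality, but the mislabeling is not innocent here: the genuine $j=1$, $m=2$ version with exponent $3/4$ is exactly the estimate you need — and never use — on the far time interval.
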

\begin{proof}
Taking the $x_i-$derivative on the  equation $\eqref{1-1}_1$, multiplying the resultant by $\partial_i\rho$ and adding them up, we have
\begin{align}\label{3.4-3}
\frac{d}{dt}\|\nabla\rho\|_{L^{2}}\leq&C\|\nabla u\|_{L^\infty}\|\nabla\rho\|_{L^{2}},
\end{align}
which implies, after using Gr\"{o}nwall inequality, that
\begin{align}\label{3.4-4}
\|\nabla\rho\|_{L^{2}}\leq \|\nabla\rho_0\|_{L^{2}}\exp\left\{C\int_0^T\|\nabla u\|_{L^\infty}\,dt\right\}.
\end{align}
To bound \eqref{3.4-4}, it suffices to estimate $\int_0^T\|\nabla u\|_{L^\infty}\,dt$. As a preparation, we first establish the estimates of $\|\nabla^2u\|_{L^p}$ for $p\in[2,6]$ and $\|\nabla^2u\|_{L^6}$ in different ways. According to 
Lemma \ref{lem.B5}, H\"{o}lder inequality
Lemma \ref{newgn} and Young inequality, for any $p\in[2,6]$, it yields that 
\begin{align}\label{3.4-5}
	&\|\nabla^2u\|_{L^p}+\|\nabla p\|_{L^p}\nonumber\\
	\leq&C\left(\|\rho u_t\|_{L^p}+\|\rho u\cdot\nabla u\|_{L^p}+\|b\cdot\nabla b\|_{L^p}\right)\nonumber\\
	\leq&C\left(\|\sqrt{\rho}u_t\|_{L^2}^{\frac{6-p}{2p}}\| u_t\|_{L^6}^{\frac{3p-6}{2p}}+\|u\|_{L^\infty}\|\nabla u\|_{L^p}+\|b\|_{L^\infty}\|\nabla b\|_{L^p}\right)\\
	\leq&C\|\sqrt{\rho}u_t\|_{L^2}^{\frac{6-p}{2p}}\|\nabla u_t\|_{L^2}^{\frac{3p-6}{2p}}+C\| u\|^{\f34}_{L^6}\|\nabla^2u\|^\f14_{L^2}\|\nabla u\|^{\f2p}_{L^2}\|\nabla^2u\|^\f{p-2}{p}_{L^2}\nonumber\\
	&+C\| b\|_{L^6}^{\f34}\|\nabla^2b\|^\f14_{L^2}\|\nabla b\|^{\f2p}_{L^2}\|\nabla^2b\|^\f{p-2}{p}_{L^2}\nonumber\\
	\leq&C\|\sqrt{\rho}u_t\|_{L^2}^{\frac{6-p}{2p}}\|\nabla u_t\|_{L^2}^{\frac{3p-6}{2p}}+C\|\nabla u\|^{\f{3p+8}{4p}}_{L^2}\|\nabla^2u\|^\f{5p-8}{4p}_{L^2}+C\|\nabla b\|^{\f{3p+8}{4p}}_{L^2}\|\nabla^2b\|^\f{5p-8}{4p}_{L^2}\nonumber\\
	\leq&C\|\sqrt{\rho}u_t\|_{L^2}^{\frac{6-p}{2p}}\|\nabla u_t\|_{L^2}^{\frac{3p-6}{2p}}+C\left(\|\nabla u\|^2_{L^2}+\|\nabla^2u\|^2_{L^2}+\|\nabla b\|^2_{L^2}+\|\nabla^2b\|^2_{L^2}\right).\nonumber
\end{align}
Meanwhile, by using similar tools to deal with \eqref{3.4-5} and classical elliptic theory, we have
\begin{align}\label{3.4-50}
	&\|\nabla^2u\|_{L^6}+\|\nabla p\|_{L^6}+\|\nabla^2b\|_{L^6}\nonumber\\
	\leq&C\left(\|\rho u_t\|_{L^6}+\|\rho u\cdot\nabla u\|_{L^6}+\|b\cdot\nabla b\|_{L^6}+\| b_t\|_{L^6}+\|u\cdot\nabla b\|_{L^6}+\|b\cdot\nabla u\|_{L^6}\right)\nonumber\\
	\leq&C(\bar{\rho})\left(\| u_t\|_{L^6}+\| b_t\|_{L^6}+\|u\|_{L^\infty}\|\nabla u\|_{L^6}+\|b\|_{L^\infty}\|\nabla b\|_{L^6}\right)\\
	&+C\left(\| b_t\|_{L^6}+\|u\|_{L^\infty}\|\nabla b\|_{L^6}+\|b\|_{L^\infty}\|\nabla u\|_{L^6}\right)\nonumber\\
	\leq&C\left(\|\nabla u_t\|_{L^2}+\|u\|^\f56_{L^6}\|\nabla^2 u\|^\f16_{L^6}\|\nabla u\|^\f35_{L^2}\|\nabla^2 u\|^\f25_{L^6}+\|b\|^\f56_{L^6}\|\nabla^2 b\|^\f16_{L^6}\|\nabla b\|^\f35_{L^2}\|\nabla^2 b\|^\f25_{L^2}\right)\nonumber\\
	&+C\left(\|\nabla b_t\|_{L^2}+\|u\|^\f56_{L^6}\|\nabla^2 u\|^\f16_{L^6}\|\nabla b\|^\f35_{L^2}\|\nabla^2 b\|^\f25_{L^6}+\|b\|^\f56_{L^6}\|\nabla^2 b\|^\f16_{L^6}\|\nabla u\|^\f35_{L^2}\|\nabla^2 u\|^\f25_{L^2}\right)\nonumber\\
	\leq&\f12\left(\|\nabla^2 u\|^2_{L^6}+\|\nabla^2 b\|^2_{L^6}\right)+C\left(\|\nabla u_t\|_{L^2}+\|\nabla b_t\|_{L^2}+\|\nabla u\|^\f{43}{13}_{L^2}+\|\nabla b\|^\f{43}{13}_{L^2}\right),\nonumber
\end{align}
which implies, after employing \eqref{3.2-1}, \eqref{3.2-01} and \eqref{3.3-1} that
\begin{align}\label{3.4-500}
	&\|\nabla^2u\|_{L^6}+\|\nabla p\|_{L^6}+\|\nabla^2b\|_{L^6}\nonumber\\
	\leq&C\left(\|\nabla u_t\|_{L^2}+\|\nabla b_t\|_{L^2}+\|\nabla u\|^\f{43}{13}_{L^2}+\|\nabla b\|^\f{43}{13}_{L^2}\right),
\end{align}
and
\begin{align}\label{3.4-5000}
	\int_0^Tt^2\left(\|\nabla^2u\|_{L^6}^2+\|\nabla^2b\|_{L^6}^2\right)\,dt\leq C.
\end{align}
Now, we estimate $\int_0^T\|\nabla u\|_{L^\infty}\,dt$, to this end, by Lemma \ref{newgn}, \eqref{3.4-5} and Young inequality, for $r\in[2, p)$, we first get
\begin{align}\label{3.4-0.5}
	&\|\nabla u\|_{L^\infty}\leq C\|\nabla u\|^\f{r}{2r-2}_{L^2}\|\nabla^2u\|^\f{r-2}{2r-2}_{L^r}\leq C\|\nabla u\|_{L^2}+C\|\nabla^2 u\|_{L^r}\\
	\leq&C\left(\|\sqrt{\rho}u_t\|_{L^2}^{\frac{6-r}{2r}}\|\nabla u_t\|_{L^2}^{\frac{3r-6}{2r}}+\|\nabla u\|^2_{L^2}+\|\nabla^2u\|^2_{L^2}+\|\nabla b\|^2_{L^2}+\|\nabla^2b\|^2_{L^2}+\|\nabla u\|_{L^2}\right).\nonumber
\end{align}
Based on \eqref{3.4-0.5}, for $\zeta(t)\triangleq\min\{1, t\}$ with $t\in[0, T]$, according to \eqref{3.1-2}, \eqref{3.2-1} and \eqref{3.3-1} with $i=1$, it follows that 
\begin{align}\label{3.4-05}
	&\int_0^{\zeta(t)}\|\nabla u\|_{L^\infty}\,dt
	\leq C\sup\limits_{t\in[0, T]}\left(t\|\sqrt{\rho}u_t\|_{L^2}^2\right)^{\frac{6-r}{4r}}\left(\int_0^Tt\|\nabla u_t\|^2_{L^2}\,dt\right)^{\frac{3r-6}{4r}}\left(\int_0^1t^{-{\frac{2r}{r+6}}}\,dt\right)^{\frac{r+6}{4r}}\notag\\
	&+\int_0^T\left(\|\nabla u\|^2_{L^2}+\|\nabla^2u\|^2_{L^2}+\|\nabla^2b\|^2_{L^2}+\|\nabla b\|^2_{L^2}\right)dt
	+\left(\int_0^T\|\nabla u\|_{L^2}^2\,dt\right)^{\f12}\left(\int_0^1dt\right)^{\f12}\notag\\
	&\leq C,
\end{align}
where we have used the fact $r<6$ in the first inequality. 

It suffices to estimate $\int_{\zeta(t)}^T\|\nabla u\|_{L^\infty}\,dt$. Thanks to 
Lemma \ref{newgn}, \eqref{3.2-1}  and \eqref{3.4-500}, there holds that
\begin{align}\label{3.4-05.}
	&\|\nabla u\|_{L^\infty}\leq C\|u\|^\f14_{L^2}\|\nabla^2 u\|^\f34_{L^6}\nonumber\\
	\leq&C\| u\|^\f14_{L^2}\left(\|\nabla u_t\|_{L^2}+\|\nabla b_t\|_{L^2}+\|\nabla u\|^\f{43}{13}_{L^2}+\|\nabla b\|^\f{43}{13}_{L^2}\right)^\f34\\
	\leq&C\left(\|\nabla u_t\|^\f34_{L^2}+\|\nabla b_t\|^\f34_{L^2}+\|\nabla u\|^2_{L^2}+\|\nabla b\|^2_{L^2}\right),\nonumber
\end{align}
which implies, after applying
\eqref{3.1-2}, \eqref{3.2-1}and \eqref{3.3-1} with $i=2$, that
\begin{align}\label{3.4-6}
	&\int_{\zeta(t)}^T\|\nabla u\|_{L^\infty}\,dt\nonumber\\
	\leq&\int_{\zeta(t)}^T\left(\|\nabla u_t\|^\f34_{L^2}+\|\nabla b_t\|^\f34_{L^2}+\|\nabla u\|^2_{L^2}+\|\nabla b\|^2_{L^2}\right)\,dt\\
	\leq&C+\left(\int_0^Tt^2\|\nabla u_t\|^2_{L^2}\,dt\right)^\f38\left(\int_1^Tt^{-\f65}\,dt\right)^\f58+\left(\int_0^Tt^2\|\nabla b_t\|^2_{L^2}\,dt\right)^\f38\left(\int_1^Tt^{-\f65}\,dt\right)^\f58
	\nonumber\\
	\leq&C+C(1-T^{-\f15})^\f58\leq C.\nonumber
\end{align}
Summing up \eqref{3.4-05} and \eqref{3.4-6}, we finally obtain
\begin{align}\label{3.4-7}
\int_0^{T}\|\nabla u\|_{L^\infty}\,dt\leq C.
\end{align}
Substituting \eqref{3.4-7} into \eqref{3.4-4}, we have
\begin{align}\label{3.4-8}
\sup\limits_{t\in[0, T]}\|\nabla\rho\|_{L^{2}}\leq C\|\nabla\rho_0\|_{L^{2}}.
\end{align}
According to the equation $\eqref{1-1}_1$,  H\"{o}lder inequality, Lemma \ref{newgn}, \eqref{3.2-1} and \eqref{3.4-8}, we have
\begin{align}\label{3.4-9}
	\|\rho_t\|^4_{L^{2}}&\leq\|u\cdot\nabla\rho\|^4_{L^{2}}\leq\|u\|^4_{L^{\infty}}\|\nabla\rho\|^4_{L^{2}}\leq C\|u\|^2_{L^2}\|\nabla^2u\|^2_{L^{2}}\|\nabla\rho\|^4_{L^{2}}\\
	&\leq C\|\nabla^2 u\|^2_{L^2},\nonumber
\end{align}
which completes the proof of \eqref{3.4-1} after integrating \eqref{3.4-9} and using \eqref{3.2-1} again.
\end{proof}

\subsection{Proof of Theorem \ref{theorem.1}} Thanks to Theorem \ref{theorem.1.0}, there exists a time interval $T_{\ast}>0$ such that the system \eqref{1-1}-\eqref{1-2} has a unique local strong solution $(\rho, u, b)$ on $[0, T_{\ast}]\times\Omega$.

Subsequently, we intend to extend the aforesaid local solution to be a global one. To this end, we define
\begin{align}\label{4-1}
	T^{\ast}=\sup\{\,\,T\in{\R}^+\,\,|\,(\rho, u, b)\,\,\mathrm{is}\,\,\mathrm{a}\,\,\mathrm{strong}\,\,\mathrm{solution}\,\,\mathrm{on}\,\,(0, T]\times\Omega \},
\end{align}
and clearly $T^{\ast}>0$. For any $0<\tau<T\leq T^{\ast}$ with $T^{\ast}$ be finite, according to \eqref{3.2-1} and \eqref{3.3-1}, it follows that
\begin{align}\label{4-3}
	\left(\nabla u, \nabla b\right)\in C([\tau, T]; L^2),
\end{align}
where we have used the Sobolev embedding
\begin{align}
	\left(\nabla u,\,\nabla b\right)\in L^\infty([0, T]; H^1)\cap H^1([\tau, T]; L^2)\hookrightarrow C([\tau, T]; L^2).
\end{align}
Utilizing \eqref{3.1-2}, \eqref{3.1-7}, \eqref{3.3-01}, \eqref{3.2-1} and the Sobolev embedding, there holds that
\begin{align}\label{4-4.0}
	u\in H^{1}([\tau, T]; L^2)\hookrightarrow C([\tau, T]; L^2),
\end{align}
and
\begin{align}\label{4-4.1}
	 b\in H^{1}([0, T]; L^2)\hookrightarrow C([0, T]; L^2).
\end{align}
With the help of \eqref{3.1-1}, \eqref{3.4-1} and \eqref{3.4-4}, it yields that
\begin{align}\label{4-4}
\rho-\bar{\rho}\in C([0, T]; L^\frac{3}{2}\cap L^\infty\cap \dot{H}^1).
\end{align}

Now, we claim that
\begin{align}\label{4-5}
	T^{\ast}=\infty,
\end{align}
otherwise, if $T^{\ast}<\infty$, it follows from \eqref{4-3}, \eqref{4-4.0}, \eqref{4-4.1}, \eqref{4-4} that
\begin{align}
\left(\rho^\ast, u^\ast, b^\ast\right)(x, T^{\ast})=\lim\limits_{t\rightarrow T^{\ast}}(\rho, u, b)(x, t),
\end{align}
and
\begin{align}
\rho^\ast-\bar{\rho}\in L^\frac{3}{2}\cap L^\infty\cap \dot{H}^1,\quad u^\ast\in H^1_{0, \sigma},\quad b^\ast\in H^1_{0, \sigma}.
\end{align}
In consequence, we can take $\left(\rho^\ast, \rho^\ast u^\ast, b^\ast\right)$ as the new initial data and apply Theorem \ref{theorem.1.0} to extend the maximal existence time of local strong solution beyond $T^\ast$. This contradicts the hypothesis of $T^{\ast}$ in \eqref{4-1}, therefore \eqref{4-5} holds. Besides, \eqref{1-9} and \eqref{1-10} follow from Lemmas \ref{lem.B3.01}-\ref{lem.B3.4} directly,
thus we finish the proof of Theorem \ref{theorem.1}.

\section{Local well-posedness of strong solutions}

For the integrity of current paper, in this section, we present the proof of local existence and uniqueness of strong solutions (i.e. Theorem \ref{theorem.1.0}) to the system \eqref{1-1}-\eqref{1-2} without any compatibility condition on the initial data. To this end, we borrow the idea developed by the paper \cite{b1.12} and take some modifications. For keeping the presentation terse, we only list the main steps.
\begin{theorem}\label{theorem.1.0}
Let $\Omega$ be the exterior of a cylinder, the initial data $(\rho_0, u_0, b_0)$ is axisymmetric and satisfies
\begin{align}\label{1-7.0}
0\leq\rho_0\leq\bar{\rho},\,\,\rho_0-\bar{\rho}\in L^{\frac{3}{2}}\cap\dot{H}^1(\Omega),\,\,u_0\in H^1_{0, \sigma}(\Omega),\,\,b_0\in H^1_{0, \sigma}(\Omega),
\end{align}
for some $\bar{\rho}>0$. Then there exists a finite time $T_0>0$ such that the system \eqref{1-1}-\eqref{1-2} and \eqref{1-2boundary} has a unique strong solution $(\rho, u, b)$ on $[0, T_0]\times\Omega$ so that for any $2\leq q<\infty$,
\begin{align}\label{1-7.1}
\left\{
\begin{aligned}
&0\leq\rho-\bar{\rho}\in L^{\infty}([0, T_0]; L^{\f32}\cap L^{\infty}\cap \dot{H}^1(\Omega))\cap C([0, T_0];L^{q}(\Omega)),\\
&\rho u\in C([0, T_0]; L^2(\Omega)),\,\,\rho_t\in L^4([0, T_0]; L^2(\Omega)),\,\,\sqrt{\rho}u_t\in L^2([0, T_0]; L^2(\Omega)),\\
&u\in L^\infty([0, T_0]; H^1_{0,\sigma}(\Omega))\cap L^2([0, T_0]; {H}^2(\Omega)),\,\,\sqrt{t}u_t\in L^2([0, T_0]; H^1(\Omega)),\\
&\sqrt{t} \nabla u\in L^\infty([0, T_0]; L^2(\Omega))\cap L^2([0, T_0]; \dot{H}^1(\Omega)),\\
&{t} \nabla^2u,{t} \nabla^2b\in L^\infty([0, T_0]; L^2(\Omega))\cap L^2([0, T_0]; L^6(\Omega)),\\
&b\in L^\infty([0, T_0]; L^4\cap H^1_{0,\sigma}(\Omega))\cap L^2([0, T_0]; {H}^2),\,\,b_t\in L^2([0, T_0]; L^2(\Omega)),\\
&\sqrt{t} \nabla b\in L^\infty([0, T_0]; H^1(\Omega))\cap L^2([0, T_0]; \dot{H}^1(\Omega)),\,\,\sqrt{t}b_t\in L^2([0, T_0]; H^1(\Omega)).
\end{aligned}
\right.
\end{align}
\end{theorem}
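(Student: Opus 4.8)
The plan is to follow the now-standard scheme for inhomogeneous incompressible systems with vacuum, in the spirit of \cite{b1.12}, combined with the linear theory and the new axisymmetric inequalities of Section 2. Since the density is allowed to vanish, the first step is to \emph{regularize away the vacuum}: for $\delta\in(0,1)$ set $\rho_0^\delta=\rho_0+\delta$, so that $\delta\le\rho_0^\delta\le\bar\rho+\delta$, and approximate $u_0,b_0$ by smooth divergence-free fields $u_0^\delta,b_0^\delta\in C_{0,\sigma}^\infty(\Omega)$ converging in $H^1_{0,\sigma}$. For each fixed $\delta$ the density is strictly positive, so the momentum equation becomes a genuine non-degenerate parabolic system for $u$, and I would construct $(\rho^\delta,u^\delta,b^\delta)$ by the usual iteration: given $(u^k,b^k)$, solve the linear transport equation for $\rho^{k+1}$ with velocity $u^k$, then the linear Stokes-type problem for $u^{k+1}$ (with coefficient $\rho^{k+1}$ and forcing $b^k\cdot\nabla b^k$) and the linear parabolic problem for $b^{k+1}$. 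Standard energy estimates together with the maximum principle for the transport equation give local bounds uniform in $k$, and a contraction estimate in a lower-order norm yields convergence of the iterates to a solution of the $\delta$-regularized system on an interval $[0,T_0]$ whose length is independent of $\delta$.

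The heart of the argument is to derive a priori estimates for $(\rho^\delta,u^\delta,b^\delta)$ that are \emph{uniform in $\delta$} and match the regularity class \eqref{1-7.1}. These are precisely the local-in-time analogues of Lemmas \ref{lem.B3.01}--\ref{lem.B3.4}: since those computations use only the uniform upper bound $\rho^\delta\le\bar\rho+1$ and the $L^{3/2}\cap\dot H^1$ norm of $\rho_0-\bar\rho$ (neither of which sees the lower bound $\delta$), they transfer verbatim, the constants now being allowed to depend on $T_0$. The basic energy estimate, the $L^\infty_tH^1$ bounds on $u,b$, and the control of $\|\sqrt\rho u\|_{L^4}$ go through exactly as in Section 3 via the axisymmetric inequalities of Lemmas \ref{lem.B3}, \ref{newgn}, \ref{lem.B4} and the Stokes estimate of Lemma \ref{lem.B5}. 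The \emph{decisive point}, and the reason no compatibility condition is needed, is that higher regularity of $u_t,b_t$ is extracted only through \emph{time-weighted} estimates: differentiating the momentum and induction equations in $t$, testing against $u_t,b_t$, and multiplying by $t^i$ as in \eqref{3.3-13} yields $\sup_t t^i(\|\sqrt\rho u_t\|_{L^2}^2+\|b_t\|_{L^2}^2)+\int_0^{T_0}t^i(\|\nabla u_t\|_{L^2}^2+\|\nabla b_t\|_{L^2}^2)\,dt\le C$ without ever evaluating $u_t$ at $t=0$. This is exactly what replaces the compatibility condition \eqref{1-5}.

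With these uniform bounds in hand I would pass to the limit $\delta\to0$. The time-weighted $H^1$ and $H^2$ control, combined with the equations, gives enough compactness (Aubin--Lions) for strong convergence of $u^\delta,b^\delta$ in $L^2([0,T_0];H^1)$ and, away from $t=0$, in $C([\tau,T_0];L^2)$, and of $\rho^\delta$ in $C([0,T_0];L^q)$, which suffices to pass to the limit in every nonlinear term; lower semicontinuity of the norms then gives \eqref{1-7.1}. Finally, uniqueness is proved through the Gr\"onwall-type Lemma \ref{lem.B6}: writing the equations for the differences $\delta\rho,\delta u,\delta b$ of two solutions, I would take $f(t)$ to measure $\delta\rho$ in a negative-order norm (e.g. $\dot H^{-1}$, since the density is merely transported) and $g(t)$ to measure $\|\sqrt{\rho}\,\delta u\|_{L^2}^2+\|\delta b\|_{L^2}^2$, with $G(t)=\|\nabla\delta u\|_{L^2}^2+\|\nabla\delta b\|_{L^2}^2$, and check that the nonlinear terms fit the hypotheses $\frac{d}{dt}f\le A\sqrt{G}$ and $\frac{d}{dt}g+G\le\alpha g+\beta f^2$ with $\alpha,t\beta\in L^1$; Lemma \ref{lem.B6} then forces $f\equiv g\equiv0$.

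The main obstacle I expect is the $\delta$-uniform time-weighted estimate: because the density weight $\sqrt{\rho^\delta}$ degenerates as $\delta\to0$ and the MHD coupling contributes the new terms $b\cdot\nabla b$, $b_t\cdot\nabla b$ and $b\cdot\nabla b_t$, one must absorb every occurrence of $\nabla u_t$ and $\nabla b_t$ carefully (as in \eqref{3.3-6} and \eqref{3.3-10}) so that the constants do not blow up as $\delta\to0$ and the short-time Gr\"onwall argument closes. A secondary subtlety is the uniqueness step, where the vacuum prevents a direct $L^2$ estimate of $\delta u$ and forces the negative-norm bookkeeping for $\delta\rho$ demanded by the structure of Lemma \ref{lem.B6}.
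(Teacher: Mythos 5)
Your overall scheme---regularize away the vacuum, run the Section~3 estimates uniformly in the regularization parameter, pass to the limit by Aubin--Lions, and prove uniqueness through Lemma~\ref{lem.B6}---is the same as the paper's. The construction differs in detail: the paper works on invading bounded domains $\Omega_R=\Omega\cap\{|x|<R\}$ with mollified data and the density shift $R^{-1}e^{-|x|^2}$, so that local existence of smooth solutions can be quoted from classical bounded-domain theory, whereas you propose a $\delta$-shift of the density on the whole exterior domain together with a linear iteration/contraction. That route is workable, but you should note that each linear solve and the contraction must then be justified directly on the unbounded domain, and that the approximate data must be kept \emph{axisymmetric} so that Lemma~\ref{lem.B1.0} and the axisymmetric inequalities (Lemmas~\ref{lem.B3}, \ref{newgn}, \ref{lem.B4}) apply to the approximate solutions; otherwise the uniform estimates of Lemmas~\ref{lem.B3.01}--\ref{lem.B3.4} are not available.

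There are two concrete gaps. First, your uniqueness bookkeeping for the density difference in $\dot H^{-1}$ does not fit Lemma~\ref{lem.B6} as stated. The lemma requires $\frac{d}{dt}f\leq A\sqrt{G}$ with \emph{no} $f$-term on the right, and with $f=\|Q\|_{\dot H^{-1}}$ the transport part $\tilde u\cdot\nabla Q$ does not disappear: it contributes a term of size $\|\nabla\tilde u\|_{L^\infty}\|Q\|_{\dot H^{-1}}$ (and in the exterior domain one must additionally fix boundary conditions for $(-\Delta)^{-1}$ and track the boundary terms). One can patch this with an integrating factor using $\nabla\tilde u\in L^1(0,T_0;L^\infty)$, but that is extra work the paper avoids entirely: it takes $f(t)=\|Q\|_{L^{3/2}}$, multiplies the transport equation by $|Q|^{-1/2}Q$, and the term $\int\tilde u\cdot\nabla\bigl(\tfrac{2}{3}|Q|^{3/2}\bigr)\,dx$ vanishes identically because $\mathrm{div}\,\tilde u=0$, giving exactly $\frac{d}{dt}\|Q\|_{L^{3/2}}\leq C\|\nabla U\|_{L^2}$ as in \eqref{4.125}; this works here (unlike in the merely bounded-density setting you may have in mind) precisely because $\nabla\tilde\rho\in L^\infty([0,T_0];L^2)$, so $\|U\cdot\nabla\tilde\rho\|$ pairs as $\|U\|_{L^6}\|\nabla\tilde\rho\|_{L^2}\|Q\|_{L^{3/2}}^{1/2}$. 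Second, the statement $\rho u\in C([0,T_0];L^2(\Omega))$ in \eqref{1-7.1}, in particular attainment of the initial datum $\rho u|_{t=0}=\rho_0u_0$, is not a consequence of weak convergence and lower semicontinuity of norms: lower semicontinuity yields $L^\infty$-in-time bounds and Aubin--Lions gives continuity only on $[\tau,T_0]$ for $\tau>0$. The paper needs the dedicated argument \eqref{4-107}--\eqref{4-110}, proving $\|(\rho_R u_R)(t)-\rho_{0,R}u_{0,R}\|_{L^2}\leq Ct^{1/5}$ uniformly in $R$ (via $\rho_t\in L^4L^2$, $\sqrt{\rho}u_t\in L^2L^2$ and interpolation) and then passing to the limit; your proposal should include an analogous step, since this is precisely the point where the absence of a compatibility condition must be reconciled with continuity of the momentum at $t=0$.
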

\begin{proof}
\textbf{Step1. Construction of approximated solutions}\\

Initially, we regularize the initial data $(\rho_0, u_0, b_0)$ via the standard mollifying process. Let
$\Omega_R\triangleq\Omega\cap\{|x|<R\}$ with $R\gg 1$, $\rho_{0, R}\triangleq(\rho_0)_R+R^{-1}e^{-|x|^2}$ and $(\rho_0)_R\in C^{\infty}(\Omega_R)$ such that
\begin{align*}
0\leq(\rho_0)_R\leq\bar{\rho},\,\,(\rho_0)_R-\bar{\rho}\rightarrow\rho_0-\bar{\rho}\,\, \mathrm{in}\,\,L^{\frac{3}{2}}(\Omega_R)\cap \dot{H}^{1}(\Omega_R)\cap L^\infty(\Omega_R),\,\,\mathrm{as}\,\, R\rightarrow\infty,
\end{align*}
and therefore
\begin{align}\label{rho0R}
	\rho_{0, R}-\bar{\rho}\rightarrow\rho_0-\bar{\rho}\,\, \mathrm{in}\,\,L^{\frac{3}{2}}(\Omega_R)\cap \dot{H}^{1}(\Omega_R)\cap L^\infty(\Omega_R),\,\,\mathrm{as}\,\, R\rightarrow\infty.
\end{align}
Setting $\tilde{u}_{0, R}\in H^1_{0, \sigma}(\Omega_R)$ to be the solution of
\begin{align*}
-\triangle\tilde{u}_{0, R}+\nabla p_{0, R}=-\triangle u_0\,\,\mathrm{in}\,\,\Omega_R,
\end{align*}
and extending $\tilde{u}_{0, R}$ to $\mathbb{R}^3$ by defining $0$ outside $\Omega_R$ so that
\begin{align*}
\tilde{u}_{0, R}\rightarrow u_0\,\,\mathrm{in}\,\,H^1_{0, \sigma}(\Omega_R),\,\,\mathrm{as}\,\,R\rightarrow\infty.
\end{align*}
Then defining $u_{0, R}\triangleq\tilde{u}_{0, R}\ast\omega_{R^{-1}}\in C^{\infty}_{0, \sigma}(\Omega_R)\cap C^{\infty}_{0, \sigma}(\Omega)$,  where $\omega_{R^{-1}}$ is the standard Friedrich mollifier with width $R^{-1}$ and hence
\begin{align}\label{u0R}
u_{0, R}\rightarrow u_0\,\,\mathrm{in}\,\,H^1_{0, \sigma}(\Omega_R),\,\,\mathrm{as}\,\,R\rightarrow\infty.
\end{align}
Recalling that $b_0\in H^1_{0, \sigma}(\Omega)$, we can choose $b_{0, R}\in\{b\in C_0^\infty(\Omega_R\cap\Omega)\,|\,\mathrm{div}\,b=0\}$ such that
\begin{align*}
b_{0, R}\rightarrow b_0\,\,\mathrm{in}\,\,H^1_{0, \sigma}(\Omega_R),\,\,\mathrm{as}\,\,R\rightarrow\infty.
\end{align*}
\textbf{Step2. Passing to the limit}\\

With the help of a priori estimates established in section 3, it is clear that there exists a $T_0$ independent of $R$ such that the system \eqref{1-1}-\eqref{1-2} with initial data $(\rho_{0, R}, u_{0, R}, b_{0, R})$ has a unique smooth solution $(\rho_R, u_R, b_R)$ on $[0, T_0]\times\Omega_R$ and we then extend this solution by $0$ on $\Omega\setminus\Omega_R$.  Thanks to Lemma \ref{lem.B1.0}, the smooth solution  $(\rho_R, u_R, b_R)$ is still axisymmetric.

Letting $R\rightarrow\infty$, according to Lemmas \ref{lem.B3.01}-\ref{lem.B3.4} and because these estimates are independent of the size of $\Omega_R$, there exists an extraction of subsequence of $(\rho_R, u_R, b_R)$ converges to the limit $(\rho, u, b)$ in the weak sense. In particular, for any $\tau>0$ and compact subdomain $\Omega'$, it holds that
\begin{align*}
&\left(\rho_R-\bar{\rho}\right)\rightharpoonup\left(\rho-\bar{\rho}\right)\,\,\mathrm{weakly}\ast\,\,\mathrm{in}\,\,L^{\infty}([0,T_0]; H^{1}\cap L^\infty(\Omega')),\\
&u_R\rightharpoonup u\,\,\mathrm{weakly}\ast\,\,\mathrm{in}\,\,L^{\infty}([0,T_0]; H^1(\Omega'))\cap L^{\infty}([\tau,T_0]; H^2(\Omega')),\\
&u_R\rightharpoonup u\,\,\mathrm{in}\,\,L^2([0,T_0]; H^2(\Omega'))\cap L^2([\tau,T_0]; W^{2,6}(\Omega')),\\
&(u_R)_t\rightharpoonup u_t\,\,\mathrm{in}\,\,L^2([\tau,T_0]; H^1(\Omega')),\,\,(\rho_R)_t\rightharpoonup \rho_t\,\,\mathrm{in}\,\,L^4([0,T_0]; L^{2}(\Omega')),\\
&b_R\rightharpoonup b\,\,\mathrm{weakly}\ast\,\,\mathrm{in}\,\,L^{\infty}([0,T_0]; H^1(\Omega'))\cap L^{\infty}([\tau,T_0]; H^2(\Omega')),\\
&b_R\rightharpoonup b\,\,\mathrm{in}\,\, L^2([0,T_0]; H^2(\Omega'))\cap L^2([\tau,T_0]; W^{2,6}(\Omega')),\\
&(b_R)_t\rightharpoonup b_t\,\,\mathrm{in}\,\,L^{2}([\tau,T_0]; H^1(\Omega')),
\end{align*}
which combine with the Aubin-Lions compactness lemma further implies
\begin{align*}
&u_R\rightarrow u\,\,\mathrm{in}\,\,C([\tau, T_0]; H^1\cap L^6(\Omega'))\cap L^2([\tau, T_0]; C^1(\overline{\Omega'}))\cap L^2([0, T_0]; C^{0,\alpha}(\overline{\Omega'})),\\
&\left(\rho_R-\bar{\rho}\right)\rightarrow \left(\rho-\bar{\rho}\right)\,\,\mathrm{in}\,\,C([0, T_0]; L^q(\Omega')),\quad{\rm for}\,\,{\rm any}\,\,2\leq q<\infty,\\
&b_R\rightarrow b\,\,\mathrm{in}\,\,C([\tau, T_0];H^1\cap L^6(\Omega'))\cap L^2([0, T_0]; C^{0,\alpha}(\overline{\Omega'})),\quad{\rm for}\,\,{\rm any}\,\,0<\alpha<\f12.
\end{align*}
Due to the previous convergences, when $R\rightarrow\infty$, we can derive that
\begin{align*}
&\rho_R(u_R)_t\rightharpoonup\rho u_t\,\,\mathrm{in}\,\,L^2([\tau, T_0]; L^2(\Omega')),\,\,u_R\cdot\nabla\rho_R\rightharpoonup u\cdot\nabla\rho\,\,\mathrm{in}\,\,L^2([\tau, T_0]; L^{2}(\Omega')),\\	
&\rho_Ru_R\cdot\nabla u_R\rightharpoonup \rho u\cdot\nabla u\,\,\mathrm{in}\,\,L^2([\tau, T_0]; L^{2}(\Omega')),\,\,b_R\cdot\nabla b_R\rightharpoonup b\cdot\nabla b\,\,\mathrm{in}\,\,L^2([0, T_0]; L^{2}(\Omega')),\\
&u_R\cdot\nabla b_R\rightharpoonup u\cdot\nabla b\,\,\mathrm{in}\,\,L^2([0, T_0]; L^{2}(\Omega')),\,\,b_R\cdot\nabla u_R\rightharpoonup b\cdot\nabla u\,\,\mathrm{in}\,\,L^2([0, T_0]; L^{2}(\Omega')),
\end{align*}
for any $\tau\in(0, T_0)$ and compact subdomain $\Omega'$. Hence, $(\rho, u, b)$ satisfies the system
\eqref{1-1}-\eqref{1-2} in the sense of distribution and further a.e. in $\Omega'\times(0, T_0)$ by
the regularities stated in Theorem \ref{theorem.1.0}. It remains to verify $\rho u\in C([0, T_0];  L^2(\Omega'))$. Firstly, for any $t\in(0, T_0)$, it follows from Gagliardo-Nirenberg inequality, H\"{o}lder inequality and \eqref{3.2-1} that
\begin{align}\label{4-107}
&\|(\rho_R u_R)(t)-\rho_{0,R}u_{0,R}\|_{L^1}
=\|\int_0^t(\rho_R u_R)_t\,d\tau\|_{L^1}
=\|\int_0^t(\partial_t\rho_R u_R+\rho_R\partial_t u_R)\,d\tau\|_{L^1}\nonumber\\
\leq&\int_0^t(\|\partial_t\rho_R u_R\|_{L^1}+\|\rho_R\partial_t u_R\|_{L^1})\,d\tau
\leq C\int_0^t(\|\partial_t\rho_R\|_{L^{\f32}} \|u_R\|_{L^\infty}+\sqrt{\bar{\rho}}\|\sqrt{\rho_R}\partial_t u_R\|_{L^2})\,d\tau\nonumber\\
\leq&C\int_0^t(\|\partial_t\rho_R\|_{L^{\f32}}\|\nabla u_R\|^{\frac{1}{2}}_{L^2}\|\nabla u_R\|^{\frac{1}{2}}_{H^1}+\|\sqrt{\rho_R}\partial_t u_R\|_{L^2})\,d\tau\\
\leq&C\sqrt{t}\left[\left(\int_0^t\|\partial_t\rho_R\|_{L^{\f32}}^4\,d\tau\right)^{\frac{1}{4}}\left(\int_0^t\|\nabla u_R\|^2_{H^1}\,d\tau\right)^{\frac{1}{4}}+\left(\int_0^t\|\sqrt{\rho_R}\partial_t u_R\|^2_{L^2}\,d\tau\right)^{\frac{1}{4}}\right]\nonumber\\
\leq&C\sqrt{t},\nonumber
\end{align}
which yields, after applying \eqref{3.2-1} and \eqref{u0R},
\begin{align}\label{4-108}
&\|(\rho_R u_R)(t)-\rho_{0,R}u_{0,R}\|_{L^2}
\leq\|(\rho_Ru_R)(t)-\rho_{0,R}u_{0,R}\|^{\frac{2}{5}}_{L^1}\|(\rho_Ru_R)(t)-\rho_{0,R}u_{0,R}\|^{\frac{3}{5}}_{L^6}\notag\\
\leq& Ct^{\frac{1}{5}}\left(\|\nabla u_R\|_{L^2}(t)+\|\nabla u_{0,R}\|_{L^2}\right)^{\f35},
\end{align}
Hence, it deduces that
\begin{align}\label{4-109}
&\|(\rho u)(t)-\rho_{0,R}u_{0,R}\|_{L^2}\leq\|(\rho u)(t)-(\rho_Ru_R)(t)\|_{L^2}+\|(\rho_Ru_R)(t)-\rho_{0,R}u_{0,R}\|_{L^2}\nonumber\\
&+\|\rho_{0,R}u_{0,R}-\rho_{0,R}u_0\|_{L^2}+\|\rho_{0,R}u_{0}-\rho_{0}u_0\|_{L^2}\\
\leq&\|(\rho u)(t)-(\rho_Ru_R)(t)\|_{L^2}+Ct^{\frac{1}{5}}+\|\rho_{0,R}u_{0,R}-\rho_{0,R}u_0\|_{L^2}+\|\rho_{0,R}u_{0}-\rho_{0}u_0\|_{L^2}.\nonumber
\end{align}
On one hand, thanks to $\left(\rho_R-\bar{\rho}\right)\rightarrow \left(\rho-\bar{\rho}\right)$ in $C([0, T_0]; L^q)$ for any $2\leq q<\infty$ and $u_R\rightarrow u$ in $C([\tau, T_0]; H^1\cap L^6)$, we have $\rho_Ru_R\rightarrow\rho u\,\mathrm{in}\,C([\tau, T_0]; L^2)$. On the other hand, due to \eqref{rho0R}, \eqref{u0R}, $\left(\rho_R-\bar{\rho}\right)\rightarrow \left(\rho-\bar{\rho}\right)$ in $C([0, T_0]; L^q)$ for any $2\leq q<\infty$ and $u_0\in H^1_{0, \sigma}$, we can deduce that the last two terms on the right side of \eqref{4-109} tend to zero. In summary, we obtain
\begin{align}\label{4-110}
&\|(\rho u)(t)-\rho_0u_0\|_{L^2}\\
\leq&\varliminf\limits_{R\rightarrow\infty}\left(\|(\rho u)(t)-(\rho_R u_R)(t)\|_{L^2}+\|\rho_{0,R}u_{0,R}-\rho_{0,R}u_0\|_{L^2}+\|\rho_{0,R}u_{0}-\rho_{0}u_0\|_{L^2}\right)+Ct^{\frac{1}{5}}\nonumber\\
&=Ct^{\frac{1}{5}},\nonumber
\end{align}
which shows that $\rho u$ is continuous at the original time and satisfies the initial condition $\rho u|_{t=0}=\rho_0u_0$.\\
\textbf{Step 3. The uniqueness of solutions}\\
Assume that $(\rho, u, b)$ and $(\tilde{\rho}, \tilde{u}, \tilde{b})$ are two local strong solutions to the system \eqref{1-1}-\eqref{1-2} with the same initial data satisfying \eqref{1-7.1} and setting
\begin{align*}
Q=\rho-\tilde{\rho},\quad U=u-\tilde{u},\quad B=b-\tilde{b},
\end{align*}
then $(Q, U, B)$ satisfies the following system
\begin{align}\label{4.123}
\left\{
\begin{aligned}
&Q_t+\tilde{u}\cdot\nabla Q+U\cdot\nabla\tilde{\rho}=0,\\
&\rho U_t+\rho u\cdot\nabla U-\triangle U+\nabla(p-\tilde{p})\\
&=-Q(\tilde{u}_t+\tilde{u}\cdot\nabla\tilde{u})-\rho U\cdot\nabla\tilde{u}+b\cdot\nabla B+B\cdot\nabla\tilde{b},\\
&B_t-\triangle B=b\cdot\nabla U+B\cdot\nabla\tilde{u}-u\cdot\nabla B-U\cdot\nabla\tilde{b},\\
&\mathrm{div}\,U=\mathrm{div}\,B=0.
\end{aligned}
\right.
\end{align}
Multiplying $\eqref{4.123}_1$ by $|Q|^{-\frac{1}{2}}Q$ and integrating the resultant by parts, it deduces from H\"{o}lder inequality and Sobolev inequalities that
\begin{align}
\frac{2}{3}\frac{d}{dt}\|Q\|^{\frac{3}{2}}_{L^{\frac{3}{2}}}
\leq\|U\|_{L^6}\|\nabla\tilde{\rho}\|_{L^2}\|Q\|_{L^{\frac{3}{2}}}^{\frac{1}{2}}
\leq C\|\nabla U\|_{L^2}\|\nabla\tilde{\rho}\|_{L^2}\|Q\|_{L^{\frac{3}{2}}}^{\frac{1}{2}},
\end{align}
which yields, after applying \eqref{1-7.1}, that
\begin{align}\label{4.125}
\frac{d}{dt}\|Q\|_{L^{\frac{3}{2}}}\leq C\int_0^t\|\nabla U\|_{L^2}\,d{\tau}.
\end{align}
Taking inner product of $\eqref{4.123}_2$ with $U$ and $\eqref{4.123}_3$ with $B$ respectively and integrating by parts, it follows from \eqref{1-7.1} and H\"{o}lder inequality, Lemma \ref{lem.B2} and Young inequality that
\begin{align}
&\frac{1}{2}\frac{d}{dt}\left(\|\sqrt{\rho}U\|^2_{L^2}+\|B\|^2_{L^2}\right)+\|\nabla U\|^2_{L^2}+\|\nabla B\|^2_{L^2}\nonumber\\
\leq&\int_{\Omega}\Big|\left[\rho U\cdot\nabla\tilde{u}+Q\left(\tilde{u}_t+\tilde{u}\cdot\nabla\tilde{u}\right)\right]\cdot U\Big|\,dx+\int_{\Omega}|\tilde{u}||\nabla B||B|\,dx\nonumber\\
&+\int_{\Omega}|b\cdot\nabla U\cdot B|\,dx+\int_{\Omega}|\nabla\tilde{b}||B||U|\,dx\nonumber\\
\leq&\sqrt{\bar{\rho}}\|\sqrt{\rho}U\|_{L^2}\|U\|_{L^6}\|\nabla\tilde{u}\|_{L^3}
+\|Q\|_{L^{\frac{3}{2}}}\left(\|\tilde{u}_t\|_{L^6}\|U\|_{L^6}+\|\tilde{u}\|_{L^{\infty}}\|\nabla\tilde{u}\|_{L^6}\|U\|_{L^6}\right)\nonumber\\
&+\|\tilde{u}\|_{L^\infty}\|B\|_{L^2}\|\nabla B\|_{L^2}+\|B\|_{L^2}\|\nabla U\|_{L^2}\|b\|_{L^\infty}+\|\nabla\tilde{b}\|_{L^3}\| B\|_{L^2}\|U\|_{L^6}\\
\leq&C\|\sqrt{\rho}U\|_{L^2}\|\nabla U\|_{L^2}\|\nabla\tilde{u}\|_{H^1}+C\|Q\|_{L^{\frac{3}{2}}}\left(\|\nabla\tilde{u}_t\|_{L^2}\|\nabla U\|_{L^2}+\|\nabla\tilde{u}\|^{\frac{1}{2}}_{L^2}\|\nabla\tilde{u}\|^{\frac{3}{2}}_{H^1}\|\nabla U\|_{L^2}\right)\nonumber\\
&+C\|\nabla\tilde{u}\|^{\frac{1}{2}}_{L^2}\|\nabla\tilde{u}\|^{\frac{1}{2}}_{H^1}\|B\|_{L^2}\|\nabla B\|_{L^2}+C\|\nabla b\|^{\frac{1}{2}}_{L^2}\|\nabla b\|^{\frac{1}{2}}_{H^1}\|B\|_{L^2}\|\nabla U\|_{L^2}\nonumber\\
&+C\|\nabla\tilde{b}\|_{H^1}\| B\|_{L^2}\|\nabla U\|_{L^2}\nonumber\\
\leq&\frac{1}{2}\left(\|\nabla U\|^2_{L^2}+\|\nabla B\|^2_{L^2}\right)+C\|Q\|_{L^{\frac{3}{2}}}^2\left(\|\nabla\tilde{u}_t\|^2_{L^2}
+\|\nabla\tilde{u}\|_{L^2}\|\nabla\tilde{u}\|^3_{H^1}\right)\nonumber\\
&+C\left(\|\sqrt{\rho}U\|^2_{L^2}+\|B\|^2_{L^2}\right)\left(\|\nabla\tilde{u}\|^2_{H^1}+\|\nabla b\|^2_{H^1}+\|\nabla\tilde{b}\|_{H^1}^2\right),\nonumber
\end{align}
that is
\begin{align}\label{4.101}
&\frac{d}{dt}\left(\|\sqrt{\rho}U\|^2_{L^2}+\|B\|^2_{L^2}\right)+\left(\|\nabla U\|^2_{L^2}+\|\nabla B\|^2_{L^2}\right)\nonumber\\
\leq&\alpha(t)\left(\|\sqrt{\rho}U\|^2_{L^2}+\|B\|^2_{L^2}\right)+\beta(t)\|Q\|_{L^{\frac{3}{2}}}^2,
\end{align}
where $\alpha(t)\triangleq(\|\nabla\tilde{u}\|^2_{H^1}+\|\nabla b\|^2_{H^1}+\|\nabla\tilde{b}\|_{H^1}^2)(t)$, $\beta(t)\triangleq(\|\nabla\tilde{u}_t\|^2_{L^2}
+\|\nabla\tilde{u}\|_{L^2}\|\nabla\tilde{u}\|^3_{H^1})(t)$. Thanks to \eqref{1-7.1}, we have $\alpha(t),\,t\beta(t)\in L^1(0, T)$. Then by setting $f(t)=\|Q\|_{L^{\frac{3}{2}}}$, $g(t)=\|\sqrt{\rho}U\|^2_{L^2}+\|B\|^2_{L^2}$, $G(t)=\|\nabla U\|^2_{L^2}+\|\nabla B\|^2_{L^2}$
and applying Lemma \ref{lem.B6} to \eqref{4.125} and \eqref{4.101}, we can obtain $\|Q\|_{L^{\frac{3}{2}}}=\|\sqrt{\rho}U\|_{L^2}\equiv\|\nabla U\|_{L^2}\equiv\|\nabla B\|_{L^2}\equiv0$. Thus $Q\equiv U\equiv B\equiv0$, which shows the uniqueness of solutions.
\end{proof}

\section*{Acknowledgments}
Jitao Liu was partially supported by National Natural Science Foundation of China under grant (No. 12471218).

\end{document}